\tikzstyle{dot} = [inner sep=0pt,thick,fill=black,circle,minimum size=2.5pt]
\tikzstyle{line} = [draw, -latex]
\newtheorem{thm}{Theorem}[section]
\newtheorem{cor}[thm]{Corollary}
\newtheorem{lem}[thm]{Lemma}
\newtheorem{exm}{Example}
\newtheorem{prop}[thm]{Proposition}
\newtheorem{defn}[thm]{Definition}
\newtheorem{rem}[thm]{Remark}
\newtheorem{defn-prop}[thm]{Definition-Proposition}
\begin{document}

\begin{center}
{\Large \bf On rooted cluster morphisms and cluster structures in $2$-Calabi-Yau triangulated categories \footnote{Supported by the NSF of China (Grant 11131001)}}

\bigskip

{\large Wen Chang\footnote{School of Mathematics and Information Science, Shaanxi Normal University, Xi'an 710062, China \&\\	
Department of Mathematical Sciences, Tsinghua University, Beijing 10084, China\\
Email: {\tt changwen161@163.com}} and
 Bin Zhu\footnote{Department of Mathematical Sciences, Tsinghua University, Beijing 10084, China\\
Email: {\tt bzhu@math.tsinghua.edu.cn}}}
\end{center}

\begin{abstract}
We study rooted cluster algebras and rooted cluster morphisms which were introduced in \cite{ADS13} recently and cluster structures in $2$-Calabi-Yau triangulated categories. An example of rooted cluster morphism which is not ideal is given, this clarifies a doubt in \cite{ADS13}. We introduce the notion of freezing of a seed and show that an injective rooted cluster morphism always arises from a freezing and a subseed. Moreover, it is a section if and only if it arises from a subseed. This answers the Problem 7.7 in \cite{ADS13}. We prove that an inducible rooted cluster morphism is ideal if and only if it can be decomposed as a surjective rooted cluster morphism and an injective rooted cluster morphism. For rooted cluster algebras arising from a $2$-Calabi-Yau triangulated category $\mathcal{C}$ with cluster tilting objects, we give an one-to-one correspondence between certain pairs of their rooted cluster subalgebras which we call complete pairs (see Definition \ref{def of complete pairs}) and cotorsion pairs in $\mathcal{C}$.
\end{abstract}

\def\s{\stackrel}
\def\Longrightarrow{{\longrightarrow}}
\def\A{\mathcal{A}}
\def\B{\mathcal{B}}
\def\C{\mathcal{C}}
\def\D{\mathcal{D}}
\def\F{\mathcal{F}}
\def\T{\mathcal{T}}
\def\R{\mathcal{R}}
\def\P{\mathcal{P}}
\def\S{\Sigma}
\def\H{\mathcal{H}}
\def\U{\mathscr{U}}
\def\V{\mathscr{V}}
\def\M{{\bf{Mut}}}
\def\L{\mathcal{L}}
\def\W{\mathscr{W}}
\def\X{\mathscr{X}}
\def\Y{\mathscr{Y}}
\def\x{{\mathbf x}}
\def\ex{{\mathbf{ex}}}
\def\fx{{\mathbf{fx}}}
\def\I{\mathcal {I}}
\def\add{\mbox{add}}
\def\Aut{\mbox{Aut}}
\def\coker{\mbox{coker}}
\def\deg{\mbox{deg}}
\def\diag{\mbox{diag}}
\def\dim{\mbox{dim}}
\def\End{\mbox{End}}
\def\Ext{\mbox{Ext}}
\def\Hom{\mbox{Hom}}
\def\Gr{\mbox{Gr}}
\def\id{\mbox{id}}
\def\Im{\mbox{Im}}
\def\ind{\mbox{ind}}
\def\mod{\mbox{mod}}
\def\mul{\multiput}
\def\c{\circ}
\def \text{\mbox}

\newcommand{\Z}{\mathbb{Z}}
\newcommand{\Q}{\mathbb{Q}}
\newcommand{\N}{\mathbb{N}}

\def\MM#1{{\bf{(CM#1)}}}

\hyphenation{ap-pro-xi-ma-tion}

\textbf{Key words.} Rooted cluster algebra; (Ideal) Rooted cluster morphism; Rooted cluster subalgebra; Cotorsion pair; Cluster structure.
\medskip

\textbf{Mathematics Subject Classification.} 16E99; 16D90; 18E30


\section{Introduction}
Cluster algebras were introduced by Sergey Fomin and Andrei Zelevinsky in \cite{FZ02} with the aim of giving an algebraic frame for the study of the canonical bases of quantum groups and the total positivity in algebraic groups. The further developments were made in a series of papers \cite{BFZ05,FZ03,FZ07}. It has turned out that these algebras have been linked to various areas of mathematics, for example, Poisson geometry, algebraic geometry, discrete dynamical system, Lie theory and representation theory of finite dimensional algebras.\\

After a number of studies of the cluster algebras in a combinatorial framework, it seems that a construction of a categorical framework is necessary. In \cite{ADS13}, the authors introduced a category ${\bf{Clus}}$ of rooted cluster algebras (see Definition \ref{def of RCA}) of geometry type with non-invertible coefficients. The only difference between a rooted cluster algebra and a cluster algebra is the emphasis of the initial seed in the rooted cluster algebra. The morphisms in this category, which are called the rooted cluster morphisms (see Definition \ref{def of RCM}), are ring homomorphisms which commute with the mutations of rooted cluster algebras. Those bijective morphisms, which are called cluster automorphisms, are also studied in \cite{ASS12,ASS13,CZ15a,CZ15b,CZ15c,N13,S10}. In this paper, we study rooted cluster algebras, rooted cluster morphisms furthermore, and rooted cluster subalgebras. In particular we answer some questions appeared in \cite{ADS13} and give a relation between complete pairs (see Definition \ref{def of complete pairs}) of rooted cluster subalgebras of rooted cluster algebras arising from $2$-Calabi-Yau triangulated categories and cotorsion pairs in these categories. The relation between complete pairs (see Definition \ref{def of complete pairs}) of rooted cluster subalgebras of rooted cluster algebras arising from stably $2$-Calabi-Yau Frobenius categories and cotorsion pairs in these categories will be given in [CZZ]. We introduce the main results and the structure of the paper as follows:\\

We study rooted cluster morphisms and ideal rooted cluster morphisms in Section \ref{Sec RCM}. In subsection \ref{Sec RCMP}, we recall the definitions of rooted cluster algebras and rooted cluster morphisms. In subsection \ref{Sec RCMI}, we consider the ideal rooted cluster morphism, which is a special rooted cluster morphism such that its image coincides with the rooted cluster algebra of its image seed (see Definition-Proposition \ref{def of ideal}(1)).
In general, a rooted cluster morphism is not ideal, we give such an example (Example \ref{Ex of nonideal RCM}), which clarifies a doubt in \cite{ADS13} (compare Problem 2.12 in \cite{ADS13}). Note that Gratz also obtains this result in \cite{G14}. Then we prove in Theorem \ref{ideal morp} that three kinds of important rooted cluster morphisms are ideal. Further, we show that ideal rooted cluster morphisms have some nice properties. It is shown in Theorem \ref{image of subalgebra}(2) that an ideal rooted cluster morphism is a composition of a surjective rooted cluster morphism and an injective rooted cluster morphism. In Theorem \ref{Inverse of rem of ideal} we prove that the inverse statement is also true for an inducible rooted cluster morphism, that is, the rooted cluster morphism which is induced from a ring homomorphism between the corresponding ambient rings (see Definition \ref{def of RCMs}).\\


In subsection \ref{Sec RCMIce}, we introduce the decompositions of an ice valued quiver and of an extended skew-symmetrizable matrix. And then we decompose a seed by decomposing the corresponding exchange matrix at the beginning of subsection \ref{Sec RCMT}. Under these decomposition, the tensor decomposition of a rooted cluster algebra is described in Theorem \ref{inj of indec comp}. Then we introduce the notion of freezing of a seed in Definition \ref{def of frozen} by freezing some initial exchangeable variables.\\

Subsection \ref{Sec RCMIS} is devoted to study the rooted cluster subalgebras of a rooted cluster algebra. Given a seed $\S$ and a freezing $\S_f$ of $\S$, it is proved in Definition-Proposition \ref{inj of frozen} that the rooted cluster algebra $\A(\S_f)$ is a rooted cluster subalgebra of $\A(\S)$, that is, there is an injective rooted cluster morphism from $\A(\S_f)$ to $\A(\S)$. Example \ref{subalg of subseed} shows that some special subseeds also give rooted cluster subalgebras. In fact, we prove in Theorem \ref{Prop of inj} that for a given rooted cluster algebra $\A(\S)$, each of its rooted cluster subalgebra $\A(\S')$ comes from a freezing and a subseed, and the corresponding injection from $\A(\S')$ to $\A(\S)$ is a section if and only if the image seed $f(\S')$ is a subseed of $\S$. The last statement answers the Problem 7.7 in \cite{ADS13}.\\

By using the results in Section \ref{Sec RCM}, the second purpose of the paper is to establish a relation between the rooted cluster subalgebras of rooted cluster algebras arising from $2$-Calabi-Yau triangulated categories with cluster tilting subcategories and the cotorsion pairs in these categories. Recall that cluster categories were firstly introduced in \cite{BMRRT06} (see also \cite{CCS06} for type $A_{n}$) as a categorification of cluster algebras. The stable module categories of the preprojective algebras of Dynkin type were considered for a similar purpose \cite{GLS06,GLS08}. Both of these categories are 2-Calabi-Yau triangulated categories with cluster tilting subcategories. The cluster structure given by the cluster tilting subcategories is defined in \cite{BIRS09} and also be studied in \cite{FK10}. A 2-Calabi-Yau triangulated category with a cluster structure can be viewed as a categorification of cluster algebras associated to the cluster tilting subcategories. There are many works on this topic, see survey papers and references cited there \cite{K12,Re10}. 
A cotorsion pair in a triangulated category was introduced in \cite{IY08}, see also in \cite{KR07}, and studied in \cite{AN12,HJR11,HJR12,Ng10,Na11,ZZ11,ZZ12} and many others furthermore. We recall some basic definitions and results on cotorsion pairs in subsection \ref{Sec TriP}.\\

In subsection \ref{Sec TriSubfactor}, we show that for a functorially finite rigid subcategories $\I$ in a $2$-Calabi-Yau triangulated category $\C$ with a cluster structure, the subfactor triangulated category $^{\bot}\I[1]/\I$ inherits a cluster structure from $\C$. Subsection \ref{Sec TriCot} is devoted to study the cluster substructures in cotorsion pairs. We prove in Theorem \ref{Them in TriC} that in a $2$-Calabi-Yau triangulated category $\C$ with a cluster structure given by its cluster tilting subcategories, if the core $\I$ of a cotorsion pair can be extended as a cluster tilting subcategory, then both the torsion subcategory and the torsionfree subcategory in the cotorsion pair have cluster substructures.\\

By using the cluster map $\varphi$ defined in \cite{FK10}, we prove in Theorem \ref{thm of Last} that there is a bijection between cotorsion pairs with core $\I \subseteq \T$ and complete pairs of rooted cluster subalgebras of $\A(\T)$ with coefficient set $\varphi(\I)$, where $\T$ is a cluster tilting subcategory in $\C$.
Finally, by gluing indecomposable components of $\A(\T_\I)$, we classify the cotorsion pairs with core $\I$ in $\C$. \\

\section{Rooted cluster morphisms}\label{Sec RCM}
\subsection{Preliminaries of rooted cluster morphisms}\label{Sec RCMP}

Cluster algebras were introduced in \cite{FZ02}. For the convenience of studying morphisms between cluster algebras, in \cite{ADS13}, the authors introduced rooted cluster algebras of geometry type with non-invertible coefficients by fixing an initial seed of the cluster algebras. We recall basic definitions and properties on rooted cluster algebras and rooted cluster morphisms in this subsection.
\begin{defn}\cite{FZ02}
\begin{enumerate}
\item A seed is a triple $\Sigma=(\ex,\fx,B)$ where $\ex=\{x_{1},x_{2},\cdot \cdot \cdot ,x_{n}\}$ is a set with n elements and $\x=\ex \sqcup \fx=\{x_{1},x_{2},\cdot \cdot \cdot ,x_{m}\}$ is a set with m elements. Here $m\geqslant n$ are positive integers or countable cardinality. $B=(b_{xy})_{{\x}\times {\ex}}\in M_{{\x}\times {\ex}}(\Z)$ is a locally finite integer matrix with a skew-symmetrizable submatrix $\tilde{B}$ consisting of the first n rows.
\item A seed $\Sigma'=(\ex',\fx',B')$ is called a subseed of a given seed $\Sigma=(\ex,\fx,B)$, if $\ex' \subseteq \ex$, $\fx' \subseteq \fx$ and $B'=B[\ex'\sqcup \fx']$, where $B[\ex'\sqcup \fx']$ is a submatrix of $B$ corresponding to the subset $\ex'\sqcup \fx'$.
\item A seed $\S^{op}=(\ex,\fx,-B)$ is called the opposite seed of a given seed $\Sigma=(\ex,\fx,B)$.
\end{enumerate}
\end{defn}

The set $\x$ is the cluster of $\S$. The elements in $\x$ ($\ex$ and $\fx$ respectively) are the cluster variables (the exchangeable variables and the frozen variables respectively) of $\S$.
The matrix B is called the exchange matrix of $\S$. It is an extended skew-symmetrizable integer matrix with the principal part $\tilde{B}$ skew-symmetrizable. We say a seed $\S=(\ex,\fx,B)$ trivial if the set $\ex$ is empty. The rational function field $\F_{\S}=\Q(x_{1},x_{2},\cdot \cdot \cdot ,x_{m})$ is called the ambient field of $\S$.

\medskip
Given a seed $\S$ and an exchangeable variable $x$ of $\S$, we can produce a new seed by a mutation defined as follows:

\begin{defn}\cite{FZ02}.\label{def of mutation}
The seed $\mu_x(\S)=(\mu_x(\ex),\mu_x(\fx),\mu_x(B))$ obtained by the mutation of $\S$ in the direction $x$ is given by:
\begin{enumerate}
				\item $\mu_x(\ex) = (\ex \setminus \{x\}) \sqcup \{x'\}$ where
				$$xx' = \prod_{\substack{y \in \x~; \\ b_{yx}>0}} y^{b_{yx}} + \prod_{\substack{y \in \x~; \\ b_{yx}<0}} y^{-b_{yx}}.$$
				\item $\mu_x(\fx)=\fx$.
				\item $\mu_x(B)=(b'_{yz})_{{\x}\times {\ex}} \in M_{{\x}\times {\ex}}(\Z)$ is given by
					$$b'_{yz} = \left\{\begin{array}{ll}
						- b_{yz} & \textrm{ if } x=y \textrm{ or } x=z~; \\
						b_{yz} + \frac 12 (|b_{yx}|b_{xz} + b_{yx}|b_{xz}|) & \textrm{ otherwise.}
					\end{array}\right.$$	
			\end{enumerate}
\end{defn}

Now, we recall the definition of rooted cluster algebras as follows:

\begin{defn}(\cite{ADS13},Definition 1.4).\label{def of RCA}
 Let $\S=(\ex,\fx,B)$ be a seed.
\begin{enumerate}
		\item  A sequence $(x_1, \cdots , x_l)$ is called $\S$-admissible if $x_1$ is exchangeable in $\S$ and $x_i$ is exchangeable in $\mu_{x_{i-1}} \circ \cdots \circ \mu_{x_1}(\S)$  for every $2 \leqslant i \leqslant l$. Denote by ${\bf{Mut}}(\S) = \{\mu_{x_{n}} \circ \cdots \circ \mu_{x_1}(\S) \ | \ n\geq 0 \text{ and } (x_1, \cdots , x_n) \textrm{ is $\S$-admissible}\}$ the mutation class of $\S$.

		\item A pair $(\S,\A)$ is called a rooted cluster algebra with initial seed $\S$, where $\mathcal A$ is the ${\Z}$-subalgebra of $\F_{\S}$ given by~:
			$$\A={\Z}\left[x \ | \ x \in \bigcup_{(\ex',\fx',B') \in {\bf{Mut}}(\S)} \x' \right].$$
		
	\end{enumerate}
\end{defn}
We call the clusters of seeds in ${\bf{Mut}}(\S)$ the clusters of $(\S,\A)$. The cluster variables (the exchangeable variables and the frozen variables respectively) in these clusters are called cluster variables (the exchangeable variables and the frozen variables respectively) of $(\S, \A)$. In particular, the cluster (cluster variable respectively) of $\S$ is called the initial cluster (cluster variable respectively) of $(\S,\A)$. The elements in the multiplicative group freely generated by frozen variables of $(\S, \A)$ are called the coefficients of $(\S, \A)$. We denote by $\X_\S$ the set of cluster variables in $(\S,\A)$. We always write $(\S, \A)$ as $\A(\S)$ for simplicity.
Note that for a trivial seed $\S=(\emptyset,\fx,B)$, the associated rooted cluster algebra is just the polynomial ring $\Z[\fx]$. We refer the readers to \cite{ADS13} for more examples of rooted cluster algebras. Cluster algebras have many remarkable properties, for example, the Laurent phenomenon. In fact, because the only difference between rooted cluster algebras and cluster algebras is the emphasis of the initial seeds in rooted cluster algebras, these properties are also true for rooted cluster algebras.

\begin{thm}(\cite{FZ02},Theorem 3.1)(\cite{FZ03},Proposition 11.2).\label{Laurent}
  Given a seed $\S=(\ex,\fx,B)$ with $\x=\ex\sqcup \fx=\{x_1,\cdots ,x_n\}\sqcup \{x_{n+1},\cdots ,x_m\}$. Let $\L_{\S,\Z}:=\Z [x_1^{\pm 1},\cdots ,x_n^{\pm 1},x_{n+1},\cdots ,x_m]$ be the localization of polynomial ring $\Z[x_{1},\cdots ,x_m]$ at $x_{1},\cdots ,x_n$. Then

$$\X_{\S} \subseteq
{\bigcap_{\S'\in \M (\S)} \L_{\S',\Z}}~~~~
and~~~~
\A(\S) \subseteq
{\bigcap_{\S'\in \M (\S)} \L_{\S',\Z}}.$$
\end{thm}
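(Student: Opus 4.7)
The plan is to reduce the statement to the classical Laurent phenomenon of Fomin--Zelevinsky and recall the outline of its proof. First, I would observe that both containments follow from a single variable-level claim: $y \in \L_{\S'',\Z}$ for every cluster variable $y \in \X_\S$ and every $\S'' \in \M(\S)$. Indeed, $\A(\S)$ is generated as a $\Z$-subalgebra of $\F_\S$ by $\X_\S$, and each $\L_{\S'',\Z}$ is itself a subring of $\F_\S$, so the algebra containment follows from the cluster-variable one. Moreover, since mutation is involutive ($\mu_x \circ \mu_x = \id$), the relation $\S'' \in \M(\S)$ is symmetric; consequently $\M(\S) = \M(\S'')$ and $\X_\S = \X_{\S''}$ for all $\S'' \in \M(\S)$. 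Relabelling the roles of $\S$ and $\S''$, it suffices to prove $\X_\S \subseteq \L_{\S,\Z}$.

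Next, I would fix $y \in \X_\S$ produced by an $\S$-admissible sequence $(x_{i_1},\ldots, x_{i_k})$, so that $y$ belongs to the cluster of $\S_k := \mu_{x_{i_k}}\circ\cdots\circ\mu_{x_{i_1}}(\S)$, and induct on $k$. The case $k = 0$ is immediate since $y \in \x \subset \L_{\S,\Z}$. For the inductive step, write the exchange relation producing $y$ as
$$y \cdot x_{i_k} \;=\; M_+ + M_-,$$
where $M_\pm$ are monomials in the cluster of $\S_{k-1}$. By the induction hypothesis each variable of $\S_{k-1}$ lies in $\L_{\S,\Z}$, so the right-hand side is a Laurent polynomial in $\x$; what remains is to show that division by $x_{i_k}$ introduces no denominator beyond a monomial in $\ex$.

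The main obstacle is exactly this denominator control, handled by Fomin--Zelevinsky's \emph{caterpillar lemma}. The strategy is to produce a second Laurent expression for $y$ in $\x$ by reaching $\S_k$ along an alternative mutation route (the ``three-vertex caterpillar'' built around $x_{i_{k-1}}$ and $x_{i_k}$, exploiting $\mu_{x_{i_{k-1}}}^2 = \id$). The two resulting Laurent expansions of $y$ in the single cluster $\x$ carry denominator monomials supported on two disjoint (hence coprime) subsets of the $x_j$'s; unique factorisation in $\Z[x_1,\ldots,x_m]$ then forces the common denominator of $y$ in $\x$ to be a monomial in $\ex$, giving $y \in \L_{\S,\Z}$. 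For countable $m$, any $\S$-admissible sequence involves only finitely many variables and finitely many rows of $B$, so after restriction to a finite subseed containing the relevant data the statement reduces to the finite case treated above. The coprimality/GCD step is the technical heart of the argument; everything else is formal bookkeeping.
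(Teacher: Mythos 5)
This statement is not proved in the paper at all: it is quoted verbatim from the literature, with the two citations (\cite{FZ02}, Theorem 3.1, and \cite{FZ03}, Proposition 11.2) standing in for a proof. So there is nothing in the paper to compare your argument against line by line; what you have written is an outline of the classical Fomin--Zelevinsky proof. Your reductions are sound: passing from $\A(\S)$ to $\X_\S$, using involutivity of mutation to symmetrize the roles of $\S$ and $\S''$ so that only $\X_\S\subseteq\L_{\S,\Z}$ needs proving, and reducing the countable case to finite subseeds are all correct and standard. The induction on the length of the admissible sequence together with the caterpillar lemma is indeed the engine of the original proof.

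There is, however, one substantive gap, and it is precisely the point that forces the paper to cite \emph{two} references rather than one. The caterpillar/coprimality argument of \cite{FZ02} proves Laurentness over the coefficient ring $\Z\mathbb{P}$ in which the frozen variables are \emph{invertible}; it shows the denominator of $y$ is a monomial in the initial cluster $\x=\{x_1,\dots,x_m\}$, not that it is a monomial in $\ex=\{x_1,\dots,x_n\}$ only. The statement you are proving asserts membership in $\L_{\S,\Z}=\Z[x_1^{\pm1},\dots,x_n^{\pm1},x_{n+1},\dots,x_m]$, where the frozen variables $x_{n+1},\dots,x_m$ are \emph{not} inverted. Your claim that the two Laurent expansions have denominators ``supported on two disjoint subsets of the $x_j$'s'' and that unique factorisation then forces the common denominator to be a monomial in $\ex$ conflates two different things: the coprimality in the caterpillar lemma is between exchange binomials and cluster monomials (and between consecutive exchange binomials), and what it rules out is the exchange binomial surviving in the denominator --- it does not by itself rule out negative powers of frozen variables. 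Excluding frozen variables from denominators is exactly the content of \cite{FZ03}, Proposition 11.2 (polynomiality in the coefficients), which requires a separate argument. As written, your proof establishes $\X_\S\subseteq\Z[x_1^{\pm1},\dots,x_m^{\pm1}]$ but not the sharper containment claimed in the theorem; you need to either invoke or reprove the coefficient-polynomiality result to close this.
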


\begin{defn}\label{def of RCM}.
(\cite{ADS13},Definition 2.2). Let $\S=(\ex,\fx,B)$ and $\S'=(\ex',\fx',B')$ be two seeds. Denote by $\x=\ex \sqcup \fx$ and $\x'=\ex' \sqcup \fx'$. A ring homomorphism $f: \A(\S)\longrightarrow \A(\S')$ is called a rooted cluster morphism if

\begin{tabularx}{15cm}{lX}
	\MM 1 & $f(\ex) \subset \ex' \sqcup \Z$~; \\
	\MM 2 & $f(\fx) \subset \x' \sqcup \Z$~;\\
	\MM 3 & For every $(f,\Sigma,\Sigma')$-biadmissible sequence $(x_1, \cdots , x_l)$, we have $\mu_{x_l} \circ \cdots \circ \mu_{x_1,\Sigma}(y) = \mu_{f(x_l)} \circ \cdots \circ \mu_{f(x_1),\Sigma'}(f(y))$ for any $y$ in $\x$, where $\mu_{f(x_l)} \circ \cdots \circ \mu_{f(x_1),\Sigma'}(f(y))=f(y)$ when $f(y)$ is an integer. Here a $(f,\Sigma,\Sigma')$-biadmissible sequence $(x_1, \cdots , x_l)$ is a $\S$-admissible sequence such that $(f(x_1),\cdots ,f(x_l))$ is $\S'$-admissible.
\end{tabularx}

\end{defn}

After introduced morphisms between rooted cluster algebras in \cite{ADS13}, the authors proved that these rooted cluster morphisms consist the set of morphisms of a category $\bf{Clus}$ with the objects given by rooted cluster algebras. This category has countable coproducts but no products. And in this category, the monomorphisms coincide with the injections but not all the epimorphisms are the surjections. They also introduced the following ideal rooted cluster morphisms to get better understanding of rooted cluster morphisms.

\begin{defn-prop}\label{def of ideal}
Given two seeds $\Sigma=(\ex,\fx,B)$ and $\Sigma'=(\ex',\fx',B')$ with  $\x=\ex \sqcup \fx$ and $\x'=\ex' \sqcup \fx'$. Let $f: \A(\S)\rightarrow \A(\S')$ be a rooted cluster morphism.
\begin{enumerate}
\item (\cite{ADS13},Definition2.8). The image seed of $\S$ under $f$ is $f(\S)=(\ex'\cap f(\ex),{\x'\cap f(\x)}\setminus {\ex'\cap f(\ex)},B'[\x'\cap f(\x)])$.
\item (\cite{ADS13}, Lemma 2.10, Definition 2.11). We have $\A(f(\S))\subseteq f(\A(\S))$. If the equality holds, then $f$ is called an ideal rooted cluster morphism.
\end{enumerate}
\end{defn-prop}

Now we define rooted cluster subalgebras, which is a modified version and a slight generalization of subcluster algebras defined in \cite{BIRS09} (see Remark \ref{rem of inj RCM} for concrete relation of these two definitions). We will study rooted cluster subalgebras in subsection \ref{Sec RCMIS} and  rooted cluster subalgebras arising from a cotorsion pair in a $2$-Calabi-Yau triangulated category in section \ref{Sec Tri}.

\begin{defn}\label{def of injRCM}
If there is an injective rooted cluster morphism $f$ from $\A(\S)$ to $\A(\S')$, then we call $\A(\S)$ a rooted cluster subalgebra of $\A(\S')$.
\end{defn}

\begin{exm}
Given two seeds
			$$\S = \left((x_1,x_2),(x_3),
				\left[\begin{array}{rr}
					0 & 1 \\
					-1 & 0 \\
					0 & -1
				\end{array}\right]
			\right)~and~
\S' = \left((x_1,x_2,x_3),\emptyset,
				\left[\begin{array}{rrr}
					0 & 1 & 0\\
					-1 & 0 & 1\\
					0 & -1 & 0
				\end{array}\right]
			\right).$$
It is easy to check that the identity $f$ on $\F_{\S}=\F_{\S'}=\Q(x_{1},x_{2},x_{3})$ induces a rooted cluster morphism $f'$ from  $\A(\S)$ to $\A(\S')$. Since $f$ is injective, $f'$ is also injective. Therefore $\A(\S)$ is a rooted cluster subalgebra of $\A(\S')$.
\end{exm}

\begin{exm}\label{subalg of subseed}
Given a seed $\Sigma=(\ex,\fx,B)$, let $\Sigma'=(\ex',\fx',B')$ be a subseed of $\S$. Then the natural injection from $\ex' \cup \fx'$ to $\ex \cup \fx$ induces an injective ring morphism $j$ from $\L_{\S',\Z}$ to $\L_{\S,\Z}$. If $j$ induces a rooted cluster morphism $f$ from
$\A(\S')$ to $\A(\S)$, then $f$ is injective and $\A(\S')$ is a rooted cluster subalgebra of $\A(\S)$. We will show in Theorem \ref{Prop of inj}(4) when the injection $j$ induces a rooted cluster morphism $f$.
\end{exm}

\subsection{Ideal rooted cluster morphisms}\label{Sec RCMI}
This subsection is devoted to ideal rooted cluster morphisms. For a seed $\S=(\ex,\fx,B)$ with $\x=\ex\sqcup \fx=\{x_1,\cdots ,x_n\}\sqcup \{x_{n+1},\cdots ,x_m\}$, we introduce $\L_{\S,\Q}:=\Q[x_1^{\pm1},\cdots ,x_n^{\pm1},x_{n+1},\cdots ,x_m]$ as the ambient ring of the rooted cluster algebra $\A(\S)$.

\begin{defn}\label{def of RCMs}
Given $\Sigma=(\ex,\fx,B)$ and $\Sigma'=(\ex',\fx',B')$ be two seeds with  $\x=\ex \sqcup \fx$ and $\x'=\ex' \sqcup \fx'$. Let $f: \A(\S)\rightarrow \A(\S')$ be a rooted cluster morphism.
\begin{enumerate}
\item We call $f$ inducible if it can be lifted as a ring homomorphism between the corresponding ambient rings.
\item We call $f$ explicit if it is uniquely determined by the images of the initial cluster variables. More precisely, if there is a rooted cluster morphism $f'$ from $\A(\S)$ to $\A(\S')$ coinciding with $f$ on the initial cluster variables, then $f=f'$.
\end{enumerate}
\end{defn}

It is easy to see that a rooted cluster morphism is inducible if and only if the image of any initial exchangeable cluster variable is not zero. By the definitions, an inducible rooted cluster morphism is explicit. Conversely, we state the following problem.\\

{\bf{Problem~1:}} Whether any explicit rooted cluster morphism is inducible or not?\\

There exists rooted cluster morphism which is not explicit, see the following example.

\begin{exm}\label{ex of RCM}
Consider the seed
			$$\S = \left((x_1,x_2),(x_3,x_4),
				\left[\begin{array}{rr}
					0 & 1 \\
					-1 & 0 \\
					0 & -1 \\
                    0 & 0
				\end{array}\right]
			\right)$$
and the rooted cluster algebra $\A(\S) = \Z[x_1,x_2,x_3,x_4,\frac{1+x_2}{x_1},\frac{x_1+x_3}{x_2},\frac{x_1+x_3+x_2 x_3}{x_1 x_2}]$.
Define a map
			$$f: \left\{\begin{array}{rcl}
				x_1 & \mapsto & 0 \\
				x_2 & \mapsto & -1 \\
				x_3 & \mapsto & 0 \\
                x_4 & \mapsto & x_1 \\
				\frac{1+x_2}{x_1} & \mapsto & y \\
				\frac{x_1+x_3}{x_2} & \mapsto & 0 \\
                \frac{x_1+x_3+x_2 x_3}{x_1 x_2} & \mapsto & -1
			\end{array}\right.$$
where $y$ is any given element in $\A(\S)$.
One can easily check that this map induces a ring homomorphism from $\A(\S)$ to itself.
Moreover, there is no $(f,\S,\S)$-biadmissible sequence, thus $f$ is a rooted cluster morphism. Because the image of the cluster variable $\frac{1+x_2}{x_1}$ can be chosen as any element in $\A(\S)$, this rooted cluster morphism is not explicit.
\end{exm}

\medskip

The following example shows that not all the rooted cluster morphisms are ideal. This clarifies a double in \cite{ADS13}.

\begin{exm}\label{Ex of nonideal RCM}
Consider the rooted cluster morphism $f$ in \text{Example}~\ref{ex of RCM}. Let $y=x_2$, then we have $f(\mathcal A(\Sigma)) = \Z[x_1,x_2] \text{ and }$
			$$f(\Sigma) = \left(\varnothing,(x_1),
				\left[0\right]\right).$$
			Therefore
			$$\mathcal A(f(\Sigma)) = \Z[x_1]$$
			so that $\mathcal A(f(\Sigma)) \subsetneqq f(\mathcal A(\Sigma))$ and thus $f$ is not ideal.
\end{exm}

We notice that this counterexample is not an explicit rooted cluster morphism and therefore the degree of freedom in choosing the images of cluster variables is increased significantly. Thus we state the following problem which can be viewed as an improved version of Problem 2.12 in \cite{ADS13}.\\

{\bf{Problem~2:}}
Whether every explicit rooted cluster morphism is ideal or not?\\

In \cite{ADS13}, the authors raised a problem that characterize rooted cluster morphisms which are ideal. We have the following answers in several important cases.

\begin{thm}\label{ideal morp}
 Given two seeds $\S=(\ex,\fx,B)$ and $\S'=(\ex',\fx',B')$ with  $\x=\ex\sqcup \fx$ and $\x'=\ex' \sqcup \fx'$. Let $f: \A(\S)\longrightarrow\A(\S')$ be a rooted cluster morphism. Then $f$ is ideal if one of the following conditions is satisfied:

     \begin{enumerate}
     \item $f(\ex) \subset \ex'$;
     \item $f$ is inducible and $\S$ is finite acyclic;
     \item $f$ is inducible and surjective.\\

    \end{enumerate}
\end{thm}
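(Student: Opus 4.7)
The plan is to verify $f(\A(\S)) \subseteq \A(f(\S))$ in each of the three cases, since the reverse inclusion is automatic by Definition-Proposition~\ref{def of ideal}(2). The unifying tool in Cases~(1) and~(2) is to combine \MM 3 with algebraic independence of the initial cluster $\x'$ in the ambient field $\F_{\S'}$, forcing certain entries of $B'$ to vanish. In Case~(1), the hypothesis $f(\ex) \subset \ex'$ makes every $\S$-admissible sequence automatically $(f,\S,\S')$-biadmissible; applying \MM 3 to a one-step sequence $(x)$ with $x \in \ex$ yields the identity
\begin{equation*}
\prod_{y \in \x} f(y)^{[b_{yx}]_{+}} + \prod_{y \in \x} f(y)^{[-b_{yx}]_{+}} \;=\; \prod_{z \in \x'} z^{[b'_{z,f(x)}]_{+}} + \prod_{z \in \x'} z^{[-b'_{z,f(x)}]_{+}}
\end{equation*}
in $\F_{\S'}$. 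The left-hand side lies in $\Z[\x' \cap f(\x)]$, so algebraic independence of $\x'$ forces $b'_{z,f(x)} = 0$ for every $z \in \x' \setminus f(\x)$; the $\S'$-mutation at $f(x)$ therefore coincides with the $f(\S)$-mutation at $f(x)$, placing $f(\mu_{x}(x))$ in $\A(f(\S))$. A direct check confirms this column-vanishing survives further mutations at other $f(x_{i})$'s (both correction-term factors are zero), so the argument iterates along every admissible chain.

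For Case~(2), I would invoke the Berenstein--Fomin--Zelevinsky generation theorem for finite acyclic seeds, $\A(\S) = \Z[\x \cup \{\mu_{x}(x) : x \in \ex\}]$, and check each generator's image separately. The images of $\x$-variables fall into $(\x' \cap f(\x)) \sqcup \Z \subseteq \A(f(\S))$. For $x \in \ex$ with $f(x) \in \ex'$, Case~(1)'s argument applies. If instead $f(x) \in \Z \setminus \{0\}$, inducibility gives
\begin{equation*}
f(\mu_{x}(x)) \;=\; \frac{1}{f(x)}\!\left( \prod_{y \in \x} f(y)^{[b_{yx}]_{+}} + \prod_{y \in \x} f(y)^{[-b_{yx}]_{+}} \right) \;\in\; \Q[\x' \cap f(\x)],
\end{equation*}
while the Laurent phenomenon (Theorem~\ref{Laurent}) simultaneously places the same element in $\A(\S') \subseteq \Z[\x'^{\pm 1}]$; the intersection of the two descriptions is $\Z[\x' \cap f(\x)] \subseteq \A(f(\S))$.

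For Case~(3), I would prove the stronger statement $f(\S) = \S'$, from which $\A(f(\S)) = \A(\S') = f(\A(\S))$ is immediate. Surjectivity expresses each $x' \in \x'$ as $f(a)$ for some $a \in \A(\S)$, and by inducibility $f(a)$ is a rational expression in $f(\x) \subseteq \x' \sqcup \Z$ with only the elements of $f(\ex) \cap \x'$ inverted; algebraic independence of $\x'$ then forces $x' \in f(\x)$, giving $\x' \subseteq f(\x)$. For $x' \in \ex'$, writing $\mu_{x'}(x') = f(b)$ similarly, the Laurent expansion of $\mu_{x'}(x')$ in $\S'$ has a genuine $x'^{-1}$ factor (the exchange polynomial at $x'$ does not involve $x'$), whereas $f(b)$ can invert only elements of $f(\ex)$; hence $x' \in f(\ex)$, so $\ex' \subseteq f(\ex)$. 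The two inclusions together identify $f(\S)$ with $\S'$.

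The common delicate step across all three cases is extracting membership or vanishing information from the specific form of an inducible image, using either algebraic independence in $\F_{\S'}$ or the Laurent denominators of exchange relations in $\S'$. I expect Case~(3) to be the subtlest in execution, since ruling out $x' \in \ex' \setminus f(\ex)$ relies on carefully tracking which variables may appear in the denominator of $f(b)$, rather than on \MM 3 directly; Cases~(1)--(2) by contrast follow a cleaner template once the key column-vanishing from \MM 3 is established.
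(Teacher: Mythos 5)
Your proposal is correct and follows essentially the same route as the paper: case (1) via \MM{3} applied to one-step sequences plus algebraic independence of $\x'$ to force the vanishing of the exchange-matrix entries in rows outside $f(\x)$, case (2) via the Berenstein--Fomin--Zelevinsky generators, and case (3) via establishing $f(\S)=\S'$. The only divergence is that the paper gets $f(\S)=\S'$ by citing Lemma 3.1 of \cite{ADS13} (remarking that inducibility must be added to its hypotheses) whereas you reprove it directly, and your explicit appeal to the Laurent phenomenon (Theorem \ref{Laurent}) to upgrade $\Q$-coefficients to $\Z$-coefficients in case (2) fills in a step the paper leaves implicit.
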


\begin{proof}
\begin{enumerate}
\item From the Definition-Proposition\ref{def of ideal}(2), it is sufficient to show that $f$ maps the cluster variables of $\A(\S)$ to $\A(f(\S))$. Because
     $f(\ex) \subset \ex'$, each $\S$-admissible sequence is $(f,\S,\S')$-biadmissible. Thus for each cluster variable $y=\mu_{x_l} \circ \cdots \circ \mu_{x_2}\circ \mu_{{x_1},\S}(x)$ in $\A(\S)$, where the sequence $(x_1,\cdots ,x_l)$ is $\S$-admissible and $x\in \x$, we have $f(y)=\mu_{f(x_l)} \circ \cdots \circ \mu_{f(x_2)}\circ \mu_{f(x_1),\S'}(f(x))$ because of $\MM 3$. It is clear that $(x_1,\cdots ,x_l)$ can also be viewed as a $(f,\S,f(\S))$-biadmissible sequence. Now we prove that $$\mu_{f(x_l)} \circ \cdots \circ \mu_{f(x_2)}\circ \mu_{f(x_1),\S'}(f(x))=\mu_{f(x_l)} \circ \cdots \circ \mu_{f(x_2)}\circ \mu_{f(x_1),f(\S)}(f(x))$$ and then $f(y)$ belongs to $\A(f(\S))$. Inductively, it is only need to show the case of $l=1$.\\

If $x \neq x_1$, then $\mu_{f({x_1}),\S'}(f(x))=f(\mu_{x_1,\S}(x))=f(x)$. Therefore $f(x) \neq f(x_1)$ and $\mu_{f({x_1}),f(\S)}(f(x))=f(x)=\mu_{f({x_1}),\S'}(f(x))$. In fact, if $f(x)=f(x_1)$, then $\mu_{f({x_1}),\S'}(f(x))=\frac{m_1(\x')+m_2(\x')}{f({x_1})}$, where $m_1(\x')$ and $m_2(\x')$ are monomials of $\x' \setminus f(x_1)$, and not equal to $f(x)$ since the algebraic independence of the initial cluster variables, thus a contradiction.\\

If $x=x_1$, since $f$ is inducible, we have
\begin{eqnarray*}
&f(\mu_{{x_1},\S}({x_1}))
&=f\left(\frac{1}{x_1}\left(\prod_{\substack{y \in \x~; \\ b_{y{x_1}}>0}} f(y)^{b_{y{x_1}}} + \prod_{\substack{y \in \x~; \\ b_{y{x_1}}<0}} f(y)^{-b_{y{x_1}}}\right)\right)\\
&&=\frac{1}{f({x_1})}\left(\prod_{\substack{y \in \x~; \\ b_{y{x_1}}>0}} f(y)^{b_{y{x_1}}} + \prod_{\substack{y \in \x~; \\ b_{y{x_1}}<0}} f(y)^{-b_{y{x_1}}}\right).
\end{eqnarray*}

On the other hand,
$$\mu_{f({x_1}),\S'}(f({x_1}))=\frac{1}{f({x_1})}\left(\prod_{\substack{z \in \x'~; \\ b'_{zf({x_1})}>0}} z^{b'_{zf({x_1})}} + \prod_{\substack{z \in \x'~; \\ b'_{zf({x_1})}<0}} z^{-b'_{zf({x_1})}}\right)~~~and~~~$$
$$\mu_{f({x_1}),f(\S)}(f({x_1}))=\frac{1}{f({x_1})}\left(\prod_{\substack{y \in \x, f(y) \in \x'~; \\ b'_{f(y)f({x_1})}>0}} f(y)^{b'_{f(y)f({x_1})}} + \prod_{\substack{y \in \x, f(y) \in \x'~; \\ b'_{f(y)f({x_1})}<0}} f(y)^{-b'_{f(y)f({x_1})}}\right).$$

Then from $f(\mu_{{x_1},\S}({x_1}))=\mu_{f({x_1}),\S'}(f({x_1}))$ and the algebraic independence of the initial cluster variables, we have
$${\prod_{\substack{y \in \x~; \\ b_{y{x_1}}>0}} f(y)^{b_{y{x_1}}}=\prod_{\substack{z \in \x'~; \\ b'_{zf({x_1})}>0}} z^{b'_{zf({x_1})}}}~~~, ~~~ \prod_{\substack{y \in \x~; \\ b_{y{x_1}}<0}} f(y)^{-b_{y{x_1}}}=\prod_{\substack{z \in \x'~; \\ b'_{zf({x_1})}<0}} z^{-b'_{zf({x_1})}}~~~~~~or$$
$${\prod_{\substack{y \in \x~; \\ b_{y{x_1}}>0}} f(y)^{b_{y{x_1}}}=\prod_{\substack{z \in \x'~; \\ b'_{zf({x_1})}<0}} z^{-b'_{zf({x_1})}}~~~, ~~~ {\prod_{\substack{y \in \x~; \\ b_{y{x_1}}<0}} f(y)^{-b_{y{x_1}}}=\prod_{\substack{z \in \x'~; \\ b'_{zf({x_1})}>0}} z^{b'_{zf({x_1})}}}}~~~.$$
No matter for which case, the above equalities show that the factors of the right hand monomials in these equalities are of the form $f(y)$ with $y \in \x$.
Thus we have equalities
$${\prod_{\substack{z \in \x'~; \\ b'_{zf({x_1})}>0}} z^{b'_{zf({x_1})}}=\prod_{\substack{y \in \x, f(y) \in \x'~; \\ b'_{f(y)f({x_1})}>0}} f(y)^{b'_{f(y)f({x_1})}}}~~~, ~~~ \prod_{\substack{z \in \x'~; \\ b'_{zf({x_1})}<0}} z^{-b'_{zf({x_1})}}=\prod_{\substack{y \in \x, f(y) \in \x'~; \\ b'_{f(y)f({x_1})}<0}} f(y)^{-b'_{f(y)f({x_1})}}~~~~~~or$$
$${\prod_{\substack{z \in \x'~; \\ b'_{zf({x_1})}<0}} z^{-b'_{zf({x_1})}}=\prod_{\substack{y \in \x, f(y) \in \x'~; \\ b'_{f(y)f({x_1})}>0}} f(y)^{b'_{f(y)f({x_1})}}~~~, ~~~ \prod_{\substack{z \in \x'~; \\ b'_{zf({x_1})}>0}} z^{b'_{zf({x_1})}}=\prod_{\substack{y \in \x, f(y) \in \x'~; \\ b'_{f(y)f({x_1})}<0}} f(y)^{-b'_{f(y)f({x_1})}}}$$
respectively. Thus $\mu_{f({x_1}),\S'}(f({x_1}))=\mu_{f({x_1}),f(\S)}(f({x_1}))$. Therefore for each $x\in \x$ we have
\begin{equation}
\mu_{f({x_1}),\S'}(f(x))=\mu_{f({x_1}),f(\S)}(f(x)).
\end{equation}

\item Because $\S$ is finite acyclic, it follows from Theorem1.20 in \cite{BFZ05} that $\A(\S)$ is finite generated and equal to $\Z[x_1,\cdots ,x_m,\mu_{x_1}(x_1),\cdots ,\mu_{x_n}(x_n)]$, where $\ex=\{x_{1},x_{2},\cdot \cdot \cdot ,x_{n}\}$ and $\x=\{x_{1},x_{2},\cdot \cdot \cdot ,x_{m}\}$. Thus to prove that $f(\A(\S))\subseteq \A(f(\S))$, it is only need to show that $f(\mu_{x_i}(x_i))\in \A(f(\S))$ for any $x_i\in \ex$. In fact, if $f(x_i)\in \ex'$, then $f(\mu_{x_i}(x_i))=\mu_{f({x_i}),\S'}(f(x_i))=\mu_{f({x_i}),f(\S)}(f(x_i)) \in \A(f(\S))$ from the equality (1). If $f(x_i)\in \Z $, then we have
$$f(\mu_{x_i}(x_i))=\frac{1}{f({x_i})}\left(\prod_{\substack{x \in \x~; \\ b_{x{x_i}}>0}} f(x)^{b_{x{x_i}}} + \prod_{\substack{x \in \x~; \\ b_{x{x_i}}<0}} f(x)^{-b_{x{x_i}}}\right),$$
since $f$ is inducible. Therefore $f(\mu_{x_i}(x_i))$ belongs to $\A(f(\S))$.

\item It follows from Lemma 3.1 \cite{ADS13} that $f(\S)=\S'$. Thus we have $f(\A({\S}))\subseteq\A({\S'})=\A(f(\S))$ and $f$ is ideal.
\end{enumerate}
\end{proof}

\begin{rem}
In fact, by checking the proof of Lemma 3.1 \cite{ADS13}, one can easily notice that it missed a condition that $f$ should be inducible.
\end{rem}




\subsection{Ice valued quivers}\label{Sec RCMIce}

In this subsection, we recall ice valued quivers and their relations with extended skew- symmetrizable matrices (compare \cite{K12}). Although these contents are standard, we write them down here for the convenience of the readers. Then we decompose ice valued quivers and extended skew- symmetrizable matrices, which will be used to decompose rooted cluster algebras and rooted cluster morphisms in the next subsection.

\begin{defn}\label{def of vq}
A valued quiver is a triple $(Q, v, d)$ where
	\begin{enumerate}
     \item $Q=(Q_0,Q_1)$ is a locally finite, simple-laced quiver without loops nor $2-$cycles. Denote by $Q_0=\{1,2,\cdots ,n\}$ with $n$ the cardinality of $Q_0$ which maybe a countable number;
     \item $v:Q_1\longrightarrow \N^2$ is a function, which maps each arrow $\alpha$ to a non-negative number pair $(v(\alpha)_1,v(\alpha)_2)$;
     \item $d: Q_0\longrightarrow \Z^*$ is a function such that for each vertex $i$ and each arrow $\alpha:i\longrightarrow j$, we have $d(i)v(\alpha)_1={v(\alpha)_2}d(j)$.
    \end{enumerate}
\end{defn}

A valued quiver $(Q, v, d)$ with $v(\alpha)_1=v(\alpha)_2$ for each arrow $\alpha$ in $Q_1$ is called an equally valued quiver. An equally valued quiver $(Q, v, d)$ corresponds to an ordinary quiver $Q'$ in the following way: $Q'$ has the vertex set $Q_0$, and there are $v(\alpha)_1=v(\alpha)_2$ number of arrows from $i$ to $j$ for any vertices $i$,$j \in Q_0$ and $\alpha: i\rightarrow j$. It is easy to check that the above correspondence gives a bijection between the set of equally valued quivers and the set of ordinary quivers without loops nor 2-cycles. Given a valued quiver
$(Q, v, d)$ with $Q_0=\{1,2,\cdots ,n\}$, we associate a matrix $B=(b_{ij})_{n\times n}\in M_{n\times n}(\Z)$ as follows:

$$b_{ij} = \left\{\begin{array}{lll}
                         0& \textrm{ if there is no arrow between}~i \textrm{ and}~j~; \\
						v(\alpha)_1 & \textrm{ if there is an arrow}~ \alpha: i\longrightarrow j~; \\
						-v(\alpha)_2 & \textrm{ if there is an arrow}~ \alpha: j\longrightarrow i~.
					\end{array}\right.$$	

Let $D=(d_{ij})_{n \times n}\in M_{n\times n}(\N)$ be the diagonal matrix with $d_{ii}=d(i)$, $i \in Q_0$, then by Definition \ref{def of vq}(3), $B$ is a skew-symmetrizable matrix in the sense of $DB$ skew-symmetric. Conversely, one can construct a valued quiver from a skew-symmetrizable matrix, we refer to \cite{K12} for more details, and it is easy to check that these procedures give a bijection between the valued quivers with vertex set $\{1,2,\cdots ,n\}$ and the skew-symmetrizable integer matrices with rank $n$.

\begin{defn}
An ice valued quiver $Q=(Q_0,Q_1)$ is a quiver without loops nor 2-cycles, where
	\begin{enumerate}
     \item $Q_0=Q^e_0\sqcup Q^f_0=\{1,2,\cdots ,n\}\sqcup \{n+1,\cdots ,m\}$ with $m\geqslant n$ be positive or countable;
     \item the full subquiver ${Q}^e=(Q^e_0,Q^e_1$) of $Q$ is a valued quiver;
     \item there are no arrows between vertices in $Q^f_0$.
    \end{enumerate}

\end{defn}
The subquiver ${Q}^e$ is called the principal part of $Q$. The vertices in $Q^e_0$ are called the exchangeable vertices while the vertices in $Q^f_0$ are called the frozen vertices.
For the ice valued quiver $Q=(Q_0,Q_1)$, the associated matrix $B=(b_{ij})_{m\times n}\in M_{m\times n}(\Z)$ consists of the following two parts. The principal part of the first $n$ rows is given by the skew-symmetrizable matrix associated with the valued quiver $Q^e$. Given two vertices $i\in Q_0^f$ and $j\in Q_0^e $, the elements in the last $m-n$ rows are as follows:

$$b_{ij} = \left\{\begin{array}{ll}
						n_{ij} &  \textrm{ if there are $n_{ij}$ arrows from $i$ to $j$}~; \\
						-n_{ij}&  ~ \textrm{if there are $n_{ij}$ arrows from $j$ to $i$}~.
					\end{array}\right.$$	

It is easy to see that the above procedure is reversible and gives a bijection between the ice valued quivers with vertex set $Q_0=Q^e_0\sqcup Q^f_0=\{1,2,\cdots ,n\}\sqcup \{n+1,\cdots ,m\}$ and the $m\times n$ extended skew-symmetrizable integer matrices. Using this correspondence, we define the mutation of an ice valued quiver by the matrix mutation defined in Definition \ref{def of mutation}(3). Then the mutation of an ice valued quiver at an exchangeable vertex produces a new ice valued quiver which has the same vertices as the original quiver has. We now introduce the following

\begin{defn}\label{def of indec ivq}
An ice valued quiver $Q$ is called indecomposable, if it is connected and the principal part ${Q}^e$ is also connected.
\end{defn}
It is not hard to see that an ice valued quiver is indecomposable if and only if the corresponding extended skew-symmatrizable matrix is indecomposable in the following sense.
\begin{defn}\label{def of indec ivq}
An extended skew-symmatrizable matrix $B=(b_{ij})_{m\times n}$ is called indecomposable, if it satisfies the following conditions,
	\begin{enumerate}
     \item for any $1 \leqslant s \neq t \leqslant m$, there exists a sequence $(i_0=s, i_1, \cdot \cdot \cdot , i_l=t)$ with $1 \leqslant i_0, i_1, \cdot \cdot \cdot , i_l \leqslant m$ such that $b_{i_ji_{j+1}}\neq 0$ for each $0 \leqslant j \leqslant l-1$;
     \item for any $1 \leqslant s \neq t \leqslant n$, there exists a sequence $(i_0=s, i_1, \cdot \cdot \cdot , i_l=t)$ with $1 \leqslant i_0, i_1, \cdot \cdot \cdot , i_l \leqslant n$ such that $b_{i_ji_{j+1}}\neq 0$ for any $0 \leqslant j \leqslant l-1$.
    \end{enumerate}
\end{defn}

\begin{defn-prop}\label{prop of decop of IVQ}
We decompose an ice valued quiver $Q$ as a collection $\{Q_1, Q_2, \cdots Q_t\}$ of indecomposable ice valued quivers in a unique way. These quivers are called indecomposable components of $Q$.
\end{defn-prop}
\begin{proof}
Given an ice valued quiver $Q$, we denote by ${Q}^e_{1},{Q}^e_{2},\cdots ,{Q}^e_{t}$ all the connected components of ${Q}^e$, where $t$ maybe a countable number. For each component ${Q}^e_{i}$, $1\leqslant i \leqslant t,$ we associate an ice valued quiver $Q_i$ as a full subquiver of $Q$, the exchangeable vertices are the vertices of ${Q}^e_{i}$ while the frozen vertices are those frozen vertices of $Q$ which connected to some vertices in ${Q}^e_{i}$ directly. It is clear that $Q_i$ is indecomposable, and the decomposition is unique.
\end{proof}

We also have the following matrix version of Definition-Proposition \ref{prop of decop of IVQ}.

\begin{defn-prop}\label{prop of decop of Matrix}
We decompose an extended skew-symmatrizable matrix $B$ as a collection $\{B_1, B_2, \cdots B_t\}$ of indecomposable extended skew-symmatrizable matrices  in a unique way. These matrices are called indecomposable components of $B$.
\end{defn-prop}

\begin{exm}\label{ex of decop of IVQ}
Given an ice valued quiver $Q:$
$$\xymatrix{\framebox{4}\ar[dr]\ar[r]&5\ar[r]&{\framebox{3}\ar[dl] \ar@<1ex>[dl] \ar@<-1ex>[dl]\ar[r]}&6&7\ar[l]
\\&1\ar[rr]|*+{\scriptstyle 1,2}&& 2.\ar[ul] \ar@<-1ex>[ul]&}$$
There are three indecomposable components of $Q$ by decomposing it at frozen vertices $3$ and $4$ as follows:
$$\xymatrix{&&{\framebox{3}\ar[dl] \ar@<1ex>[dl] \ar@<-1ex>[dl]}&\\
Q_1:~~~~~~\framebox{4}\ar[r]&1\ar[rr]|*+{\scriptstyle 1,2} && 2\ar[ul] \ar@<-1ex>[ul]}$$

$$\xymatrix{Q_2:~~~~~~~~\framebox{8}\ar[r]&5\ar[r]&{\framebox{9}}&}$$

$$\xymatrix{Q_3:~~~~~~~~\framebox{10}\ar[r]&6&7.\ar[l]&}$$
\end{exm}

The inverse process of the decomposition is the following gluing of ice valued quivers.
Given two ice valued quivers  $Q'=(Q'_0,Q'_1)$ and $Q''=(Q''_0,Q''_1)$, denote by $(Q'^{e}, v', d')$ and $(Q''^{e}, v'', d'')$ the principal parts of $Q'$ and $Q''$ respectively. Let $\Delta'$ and $\Delta''$ be two subsets of $Q'^f_0$ and $Q''^f_0$ respectively.
Assume that there is a bijection $f$ from $\Delta'$ to $\Delta''$ and we identify $\Delta'$ and $\Delta''$ as $\Delta$ under this bijection. We define a quiver $Q=(Q_0,Q_1)$, where $Q_0=(Q'_0\setminus \Delta')\sqcup (Q''_0\setminus \Delta'')\sqcup \Delta$ and the arrows in $Q_1$ come from $Q'_1$ and $Q''_1$. More precisely, there is an arrow $\alpha:i\longrightarrow j$ for $i,j \in Q_0$ if and only if there is an arrow $\alpha: i \longrightarrow j$ in $Q'_1$ or $Q''_1$.

\begin{prop}
The quiver $Q$ constructed above is an ice valued quiver where the principal part is a full subquiver of $Q$ corresponding to  ${Q'_0}^e\sqcup {Q''_0}^e $ and the functions $v$ and $d$ are given by:

$$v(\alpha) = \left\{\begin{array}{ll}
						v'(\alpha) &  \textrm{ if } \alpha \in {Q'_1}^e~\\
						v''(\alpha) &  \textrm{ if } \alpha \in {Q''_1}^e~;
					\end{array}\right.$$	

$$d_{i} = \left\{\begin{array}{ll}
						d'_{i} &  \textrm{ if } i \in {Q'_0}^e \\
						d''_{i} &  \textrm{ if } i \in {Q''_0}^e~.
					\end{array}\right.$$	
We call $Q$ the gluing of $Q'$ and $Q''$ along $f$ (or $\Delta$).
\end{prop}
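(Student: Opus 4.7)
The plan is to verify in turn the three structural conditions of an ice valued quiver for $Q$, and then the compatibility axiom $d(i)v(\alpha)_1 = v(\alpha)_2 d(j)$ on its principal part. The decisive observation throughout is that the identified subset $\Delta$ consists of frozen vertices in both factors, so any pathology introduced by the gluing can be pushed back to a factor where it is forbidden.

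First I would fix the vertex partition. Since $\Delta' \subseteq {Q'}^f_0$ and $\Delta'' \subseteq {Q''}^f_0$, the identified copy $\Delta$ is naturally frozen, so I set $Q_0^e := {Q'}^e_0 \sqcup {Q''}^e_0$ and $Q_0^f := ({Q'}^f_0 \setminus \Delta') \sqcup ({Q''}^f_0 \setminus \Delta'') \sqcup \Delta$. Every arrow of $Q$ comes from $Q'_1$ or $Q''_1$, so an arrow between two frozen vertices of $Q$ would be an arrow between two frozen vertices of $Q'$ or $Q''$; no such arrow exists, so condition (3) of the definition of an ice valued quiver holds. Loops are not created because neither factor has loops, and a $2$-cycle of $Q$ whose two arrows come from different factors would force both endpoints into $\Delta$ and hence force both arrows to connect frozen vertices in their factor, a contradiction; a $2$-cycle with both arrows in a single factor is excluded by that factor itself. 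Local finiteness at a vertex outside $\Delta$ is inherited from the unique factor containing it, and at a vertex of $\Delta$ the set of incident arrows is a union of two finite sets.

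Finally I would verify that the principal part of $Q$ is the full subquiver on $Q_0^e$ and is a valued quiver with data $(v,d)$. No arrow of $Q$ can connect a vertex of ${Q'}^e_0$ to one of ${Q''}^e_0$, because it would have to lie in $Q'_1$ or $Q''_1$ while its two endpoints belong to disjoint subsets of those respective vertex sets; hence the full subquiver on $Q_0^e$ splits as ${Q'}^e \sqcup {Q''}^e$. The piecewise formulas in the statement then define $v$ and $d$ unambiguously, and on each arrow $\alpha$ the identity $d(i)v(\alpha)_1 = v(\alpha)_2 d(j)$ follows directly from the corresponding identity in $(Q', v', d')$ or $(Q'', v'', d'')$. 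The only real obstacle is careful bookkeeping at the gluing locus $\Delta$; the frozenness of $\Delta'$ and $\Delta''$ in the factors is the single input that rules out all the potential obstructions in one stroke.
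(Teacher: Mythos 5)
Your verification is correct and is precisely the straightforward check that the paper omits (its entire proof reads ``This can be checked straightforward''). The one observation you isolate --- that the identified vertices of $\Delta$ are frozen in both factors, so every potential pathology (frozen--frozen arrows, $2$-cycles spanning the two factors, arrows between the two principal parts) reduces to a configuration already forbidden in $Q'$ or $Q''$ --- is indeed the whole content of the claim.
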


\begin{proof}
This can be checked straightforward.
\end{proof}
Similarly, the gluing of extended skew-symmatrizable matrices can also be defined.

\subsection{Tensor decomposition and freezing}\label{Sec RCMT}

Recall that a seed is essentially determined by its exchange matrix, which is extended skew-symmetrizable. Then we decompose a seed as follows by decomposing the corresponding exchange matrix.
\begin{defn}\label{def of indec of seed}
Let $\S$ be a seed with the exchange matrix $B$. Denote by $\{B_1, B_2, \cdots, B_t\}$ the set of indecomposable components of $B$.
\begin{enumerate}
\item The seed $\S$ is said to be indecomposable if $B$ is indecomposable.
\item The decomposition of $\S$ is a set $Ind(\S)=\{\S_1, \S_2, \cdots, \S_t\}$, where each $\S_i$, $1 \leqslant i \leqslant t$, is a subseed of $\S$ corresponding to $B_i$. We call each $\S_i$, $1 \leqslant i \leqslant t$, an indecomposable component of $\S$. Note that $t$ could be a countable number.
\end{enumerate}
\end{defn}
As an inverse process of decomposition, we glue seeds at frozen variables by gluing the corresponding matrix as defined in the above subsection.
\begin{rem}
\begin{enumerate}
\item A cutting of a seed along separating families of frozen variables is defined in \cite{ADS13} (Definition 7.2).It is not hard to see that the decomposition of a seed at frozen variables is obtained by cuttings repeatedly.
\item It is also easy to see that an amalgamated sum along glueable seeds, which is defined in \cite{ADS13} (Definition 4.11), is in fact
 a special case of a glue of seeds defined above. Comparing with the amalgamated sum, in our sense, any two seeds are glueable along each bijection between some frozen variables in these two seeds since we don't care about connections between frozen variables.
\end{enumerate}
\end{rem}

From now on to the end of this subsection, we fix the following settings. Let $\Sigma=(\ex,\fx,B)$ be a seed with $\x=\ex \sqcup \fx$ and $B=(b_{xy})_{\x\times \ex}$. We denote by $Ind(\S)=\{\S_1,\S_2,\cdots ,\S_t\}$ the set of indecomposable components of $\S$,
where for each $1\leqslant i \leqslant t$, the seed $\S_i$ is $(\ex_i,\fx_i,B_i)$ with $\x_i=\ex_i\sqcup \fx_i$ and $B_i=(b^i_{xy})_{{\x_i}\times{\ex_i}}$, and $\X_{\S_i}$ is the set of cluster variables of $\S_i$.
Let $\A(\S_1)\otimes_\Z \A(\S_2)\otimes_\Z\cdots \otimes_\Z\A(\S_t)$ be the tensor algebra of the rooted cluster algebras $\A(\S_i)$, $1\leqslant i \leqslant t$. Then there is a quotient algebra $\A(\S_1)\otimes_\Z \A(\S_2)\otimes_\Z\cdots \otimes_\Z\A(\S_t)/\I$ by identifying the elements $1\otimes 1\cdots 1\otimes x^{i\textrm{-}th} \otimes 1 \cdots \otimes 1$
and $1\otimes 1\cdots 1\otimes x^{j\textrm{-}th} \otimes 1 \cdots \otimes 1$ for any $x \in \fx$ with $x \in \x_i\cap \x_j$ for some $1\leqslant i , j \leqslant t$. Now we state one of the main theorems in this subsection as follows:
\begin{thm}\label{inj of indec comp}

	\begin{enumerate}
     \item For each $\S_{i}\in Ind(\S)$, the canonical injection $j_i:\x_i \rightarrow \x$ induces an injective rooted cluster morphism from $\A(\S_i)$ to $\A(\S)$. Thus $\A(\S_i)$ is a rooted cluster subalgebra of $\A(\S)$.
     \item The set $\X_{\S}$ of cluster variables of $\S$ coincides with the disjoint union $\bigsqcup_{i=1}^{t}(\X_{\S_i}\setminus \fx_i)\bigsqcup \fx$.
     \item The ring homomorphism $$\tilde{j}:~~~\L_{\S_1,\Q}\otimes_\Q \L_{\S_2,\Q}\otimes_\Q\cdots \otimes_\Q\L_{\S_t,\Q} \rightarrow \L_{\S,\Q},$$ which is given by $x_1\otimes x_2\otimes \cdots \otimes x_t  \longmapsto j_{1}(x_1)j_{2}(x_2) \cdots j_{t}(x_t)$ for any $x_i \in \x_i (1\leqslant i \leqslant t)$, induces a ring isomorphism:
     $$j:~~~ \A(\S_1)\otimes_\Z \A(\S_2)\otimes_\Z\cdots \otimes_\Z\A(\S_{t})/\I \rightarrow \A(\S).$$
     We call $\A(\S_1)\otimes_\Z \A(\S_2)\otimes_\Z\cdots \otimes_\Z\A(\S_{t})/\I$ the tensor decomposition of $\A(\S)$.
        \end{enumerate}

\end{thm}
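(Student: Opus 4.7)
My plan is to prove (1)--(3) in order, with an auxiliary ``decomposition is mutation-invariant'' lemma providing the technical core of the argument.

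For (1), the key observation is that by the definition of the decomposition at frozen variables, for any $x \in \ex_i$ the column $b_{\bullet,x}$ of $B$ has nonzero entries only in rows indexed by $\x_i$. Hence the exchange binomial at $x$ computed in $\S$ agrees with that computed in $\S_i$, so $\mu_x(x)$ is the same new variable; moreover the matrix mutation formula $b'_{yz} = b_{yz} + \frac{1}{2}(|b_{yx}|b_{xz} + b_{yx}|b_{xz}|)$ can only change $b_{yz}$ when both $b_{yx}$ and $b_{xz}$ are nonzero, forcing $y,z \in \x_i$. I would record this as a lemma: for $x \in \ex_i$, the indecomposable decomposition of $\mu_x(\S)$ is obtained from $Ind(\S)$ by replacing $\S_i$ with $\mu_x(\S_i)$, all other components being untouched. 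Induction on the length then shows that every $\S_i$-admissible sequence is $\S$-admissible and that $j_i$ intertwines the corresponding mutations, yielding $\MM 3$; $\MM 1$ and $\MM 2$ are immediate from $\ex_i \subseteq \ex$ and $\fx_i \subseteq \x$. Injectivity of the induced rooted cluster morphism follows from the injection of ambient rings $\L_{\S_i,\Z} \hookrightarrow \L_{\S,\Z}$ together with the Laurent phenomenon (Theorem \ref{Laurent}).

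For (2), the same lemma gives by induction on the length of a $\S$-admissible sequence that every non-frozen cluster variable of $\S$ arises as $j_i(y)$ for some $y \in \X_{\S_i}$ and a unique index $i$: mutations in directions from $\ex_j$ ($j \neq i$) leave the $\S_i$-slice of the seed untouched, so the production of a cluster variable originating from an initial exchangeable $x \in \ex_i$ takes place entirely inside the component $\S_i$. The reverse inclusion $j_i(\X_{\S_i}) \subseteq \X_\S$ is exactly part (1). Disjointness follows from the Laurent phenomenon applied in $\A(\S)$: a non-frozen cluster variable of $\S_i$, written as a Laurent polynomial in the initial cluster $\x$, has its denominator monomial supported on $\ex_i$, which singles out the component uniquely.

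For (3), I would first verify the ambient-ring analogue: $\tilde j$ descends to a ring isomorphism
\[
\L_{\S_1,\Q} \otimes_\Q \cdots \otimes_\Q \L_{\S_t,\Q}/\I' \xrightarrow{\ \sim\ } \L_{\S,\Q},
\]
where $\I'$ is the ideal identifying copies of shared frozen variables across tensor factors. This is a routine consequence of the universal property of Laurent polynomial rings together with the fact that $\x$ is the disjoint union of the $\x_i$ modulo identification of shared frozen variables. Since each $j_i$ restricts to the embedding $\A(\S_i) \hookrightarrow \A(\S)$ of part (1), $\tilde j$ restricts to a well-defined ring homomorphism $j: \A(\S_1) \otimes_\Z \cdots \otimes_\Z \A(\S_t)/\I \to \A(\S)$. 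Injectivity of $j$ is inherited from the ambient isomorphism, and surjectivity follows from (2), since $\A(\S)$ is generated by $\X_\S$ and every element of $\X_\S$ lies in $\bigcup_i j_i(\X_{\S_i}) \cup \fx$, which sits in the image of $j$. The main obstacle is the auxiliary lemma underpinning (1): proving rigorously that matrix mutation cannot create cross-component entries and tracking this invariance along arbitrary admissible sequences. Once this is secured, parts (2) and (3) follow as clean combinatorial and algebraic consequences.
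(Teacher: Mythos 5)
Your proposal is correct and follows essentially the same route as the paper: establish that a single mutation in a direction $x\in\ex_i$ affects only the $\S_i$-block (so exchange relations computed in $\S_i$ and in $\S$ agree), induct along admissible sequences to get \MM 3 and part (1), derive (2) by restricting an admissible sequence to its subsequence in $\ex_i$, and obtain (3) from the ambient Laurent-ring computation. Your explicit ``decomposition is mutation-invariant'' lemma is exactly the step the paper dismisses as clear, and your added remarks on disjointness in (2) and injectivity in (1) fill in details the paper leaves implicit.
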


\begin{proof}
\begin{enumerate}
\item It is clear that the map $j_i:\x_i \rightarrow \x$ lifts as a ring homomorphism from $\L_{\S_i,\Q}$ to $\L_{\S,\Q}$ which satisfies $\MM 1$ and $\MM 2$. Note that each $\S_i$-admissible sequence $(x_1,x_2,\cdots ,x_l)$ is clearly $(j_i,\S_i,\S)$-biadmissible, thus to prove that $j_i$ induces a homomorphism from $\A(\S_i)$ to $\A(\S)$ and satisfies $\MM 3$, it is sufficient to show the equality $\mu_{x_l}\circ \cdots \circ\mu_{x_1,\S_i}(y)=\mu_{x_l}\circ \cdots \circ\mu_{x_1,\S}(y)$ for any $\S_i$-admissible sequence $(x_1,x_2,\cdots ,x_l)$ and any cluster variable $y\in \x_i$. Again, we prove the equality by induction on $l$. The case of $l=0$ is clear.
Assume that $l=1$. If $y\neq x_1$, then $\mu_{x_1,\S_i}(y)=y=\mu_{x_1,\S}(y)$. If $y=x_1$, then we have
$$\mu_{x_1,\S_i}(x_1)
=\frac{1}{x_1}\left(\prod_{\substack{x \in \x_i~; \\ b^i_{x{x_1}}>0}} x^{b^i_{x{x_1}}} + \prod_{\substack{x \in \x_i~; \\ b^i_{x{x_1}}<0}} x^{-b^i_{x{x_1}}}\right)
=\frac{1}{x_1}\left(\prod_{\substack{x \in \x~; \\ b_{x{x_1}}>0}} x^{b_{x{x_1}}} + \prod_{\substack{x \in \x~; \\ b_{x{x_1}}<0}} x^{-b_{x{x_1}}}\right)
=\mu_{x_1,\S}(x_1)$$ where the second equality follows from the definition of the indecomposable component. Thus $\mu_{x_1,\S_i}(y)=\mu_{x_1,\S}(y)$ for any cluster variable $y\in \x_i$.
When $l=2$, it is clear that $\mu_{x_1}(\S_i)$ is an indecomposable component of $\mu_{x_1}(\S)$.
Then similar to the case of $l=1$, it can be shown that for any cluster variable $\mu_{x_1}(y) \in \mu_{x_1}(\S_i)$, $\mu_{x_2,\mu_{x_1}(\S_i)}(\mu_{x_1}(y))=\mu_{x_2,\mu_{x_1}(\S)}(\mu_{x_1}(y))$, thus we have $\mu_{x_2}\circ\mu_{x_1,\S_i}(y)=\mu_{x_2,\mu_{x_1}(\S_i)}(\mu_{x_1}(y))=\mu_{x_2,\mu_{x_1}(\S)}(\mu_{x_1}(y))=\mu_{x_2}\circ\mu_{x_1,\S}(y)$. Finally, we inductively prove the equality in this way.

\item It follows from the proof of the above assertion that the set $\X_{\S}$ contains the set $\bigsqcup_{i=1}^{t}(\X_{\S_i}\setminus \fx_i)\bigsqcup \fx$. Note that any cluster variable of $\A(\S)$ is of the form $\mu_{x_l}\circ \cdots \circ\mu_{x_1,\S}(y)$, where $(x_1,x_2,\cdots ,x_l)$ is a $\S$-admissible sequence and $y \in \x$ is a cluster variable of $\S$. Assume that $y \in \x_i$ and
    $\{x_{i_t}\}_{t=1}^s$ be the maximal subset of $\{x_k\}_{k=1}^l$ with $x_{i_t}\in \x_i$ and $i_{t}
    < i_{t'}$ for each $1\leqslant t < t' \leqslant s$. Then we have $\mu_{x_l}\circ \cdots \circ\mu_{x_1,\S}(y)=\mu_{x_{i_s}}\circ \cdots \circ\mu_{x_{i_1},\S_i}(y)$ since $\S_i$ is an indecomposable component of $\S$. Thus the inverse inclusion is valid and $\X_{\S}=\bigsqcup_{i=1}^{t}(\X_{\S_i}\setminus \fx_i)\bigsqcup \fx$.

\item Firstly, it is easy to see that $\tilde{j}$ is a ring homomorphism and induces a surjective ring homomorphism $\hat{j}$ from $\A(\S_1)\otimes_\Z \A(\S_2)\otimes_\Z\cdots \otimes_\Z\A(\S_{t})$ to $\A(\S)$ due to the assertion 2. Secondly, the kernel $Ker(\hat{j})$ of $\hat{j}$ is $\I$ and thus we get the conclusion. In fact, $Ker(\hat{j})=\I$ is because $Ker(\tilde{j}) = \I$.
\end{enumerate}

\end{proof}

\begin{exm}
Consider the ice valued quiver $Q$ and its indecomposable components $Q_i$ ($1\leqslant i \leqslant3$) in Example \ref{ex of decop of IVQ}. Denote by $\A$ and $\A_i$ ($1\leqslant i \leqslant3$) the rooted cluster algebras corresponding to $Q$ and $Q_i$ ($1\leqslant i \leqslant3$) respectively. The ambient rings of $\A$ and $\A_i$ ($1\leqslant i \leqslant3$) are respectively as follows:
$$~~~~~~~\L=\Q[x_1^{\pm1}, x_2^{\pm1}, x_5^{\pm1}, x_6^{\pm1}, x_7^{\pm1}, x_3, x_4],$$
$$\L_1=\Q[x_1^{\pm1}, x_2^{\pm1}, x_3, x_4],~~~~~~~~~~~~$$
$$\L_2=\Q[x_5^{\pm1}, x_8, x_9],~~~~~~~~~~~~~~~~~~~$$
$$\L_3=\Q[x_6^{\pm1}, x_7^{\pm1}, x_{10}].~~~~~~~~~~~~~~~~$$
Let $j: \L_1\otimes_\Q \L_2\otimes_\Q \L_3 \rightarrow \L$ be a ring homomorphism with
$$j(x_i)= \left\{\begin{array}{rcl}
				x_i, & & i=1, 2, 5, 6, 7; \\
				x_3, & & i=3, 9, 10; \\
				x_4, & & i=4, 8.
               \end{array}\right.$$
Then from above theorem, it induces a ring isomorphism $j: \A_1\otimes_\Z \A_2\otimes_\Z\A_3/\I \rightarrow \A$, where $\I=<x_3 \otimes 1 \otimes 1 - 1 \otimes x_9 \otimes 1, x_3 \otimes 1 \otimes 1 - 1 \otimes 1 \otimes x_{10}, x_4 \otimes 1 \otimes 1 - 1 \otimes x_8 \otimes 1>$.
\end{exm}

\begin{defn}\label{def of frozen}
Let $\Sigma=(\ex,\fx,B)$ be a seed and $\ex_0$ be a subset of $\ex$. The seed $\Sigma_f=(\ex \setminus \ex_0,\fx\sqcup \ex_0,B_f)$ is called the freezing of $\S$ at $\ex_0$, where $B_f$ is obtained from $B$ by deleting the columns corresponding to the elements in $\ex_0$.
\end{defn}

\begin{exm}
Consider the seed
\begin{center}$\S = \left((x_1,x_2,x_3),\varnothing,
				\left[\begin{array}{rrr}
					0 & -2 & 6 \\
					1 & 0 & -3 \\
					-2 & 2 & 0
				\end{array}\right]
			\right) ~~~~~\text{and its ice valued quiver}~~~~~$
~\\
~\\
$\begin{xy} 0;<0.5pt,0pt>:<0pt,-0.5pt>::
(-75,100) *+{1} ="0",
(0,0) *+{2} ="1",
(75,100) *+{3.} ="2",
"1", {\ar|*+{\scriptstyle 1,2}"0"},
"0", {\ar|*+{\scriptstyle 6,2}"2"},
"2", {\ar|*+{\scriptstyle 2,3}"1"},
\end{xy}$
\end{center}
Let
$$\S_f = \left((x_1,x_2),(x_3),
				\left[\begin{array}{rr}
					0 & -2 \\
					1 & 0 \\
					-2 & 2
				\end{array}\right]
			\right)$$
be the freezing of $\S$ at the exchangeable cluster variable $x_3$. Then the corresponding ice valued quiver is
$$\xymatrix{&2{\ar[dl]|*+{\scriptstyle 1,2}}&
\\1\ar[rr] \ar@<-1ex>[rr] && \framebox{3} \ar[ul] \ar@<-1ex>[ul]}$$
where we have framed the frozen vertex.
\end{exm}

\begin{defn-prop}\label{inj of frozen}
The natural bijection $j$ from the cluster variables of $\S_f$ to the cluster variables of $\S$ induces an injective rooted cluster morphism from $\A(\S_f)$ to $\A(\S)$. Thus $\A(\S_f)$ is a rooted cluster subalgebra of $\A(\S)$. We call $\A(\S_f)$ the freezing of $\A(\S)$ at $\ex_0$.
\end{defn-prop}

\begin{proof}
Firstly, it is clear that $j$ lifts as an injective ring homomorphism from $\L_{\S_f,\Q}$ to $\L_{\S,\Q}$ which satisfies $\MM 1$ and $\MM 2$. Secondly, each $\S_f$-admissible sequence $(x_1,x_2, \cdots,x_l)$ is clearly $(j,\S_f,\S)$-biadmissible.
Then similar to the proof of Theorem \ref{inj of indec comp}(1), $\MM 3$ is also valid. So we are done.
\end{proof}

\subsection{Injections and specializations}\label{Sec RCMIS}

In this subsection, we characterize injective rooted cluster morphisms and define the general specialization.

\medskip

Given two seeds $\S_0$ and $\S$. Let $g: \A(\S_0)\longrightarrow\A(\S)$ be an injective rooted cluster morphism. Then from the injectivity, $g$ induces an injection from the set $\x_{\S_0}$ of initial cluster variables in
$\A(\S_0)$ to the set $\x_{\S}$ of initial cluster variables in $\A(\S)$.
In fact, for any $x\in \x_{\S_0}$, if $g(x)=n \in \Z$, then we have $g(n)=g(x)=n$ and a contradiction to the injectivity of $g$.
Thus there is no harm to assume that $\S_0=(\ex_0,\fx_0\sqcup \ex_2,B_0)$ and $\S=(\ex_0\sqcup \ex_1 \sqcup \ex_2,\fx_0\sqcup \fx_1,B)$ where $g$ is an identity on $\x_{\S_0}=\ex_0\sqcup \fx_0\sqcup \ex_2$. We define $\S_f=(\ex_0\sqcup \ex_1,\fx_0\sqcup \fx_1 \sqcup \ex_2,B_f)$ as the freezing of $\S$ at $\ex_2$ and $\S_1=(\ex_1,\fx_1\sqcup \ex_2,B_1)$ as a subseed of $\S_f$. Denote by $Ind(\S_0)$ ($Ind(\S_1)$ and $Ind(\S)$ respectively) be the set of indecomposable components of $\S_0$ ($\S_1$ and $\S$ respectively).

\begin{thm}\label{Prop of inj}
Let $\A(\S_0)$ be a rooted cluster subalgebra of $\A(\S)$ under an injective rooted cluster morphism $g: \A(\S_0)\longrightarrow\A(\S)$ with notations as above.
\begin{enumerate}
     \item If the seed $\S_0$ is indecomposable, then $\S_0$ (or $\S^{op}_0$) is an indecomposable component of $\S_f$, and
     the seed $\S_f$ is obtained by gluing $\S_0$ (or $\S^{op}_0$) and $\S_1$ along $\ex_2$.
     \item There is a bijection $h$ from $Ind(\S_0)\sqcup Ind(\S_1)$ to $Ind(\S_f)$ such that $h(\S')=\S'$ or $h(\S')={\S'}^{op}$ for each $\S'\in Ind(\S_0)$ and $h(\S')=\S'$ for each $\S'\in Ind(\S_1)$ .
     \item There is a rooted cluster isomorphism $\A(\S_0)\cong \A(g(\S_0))$.
     \item The injection $g$ is a section if and only if  ~$\ex_2$~ is an empty set, that is, $\S_0$ is a subseed of $\S$.
\end{enumerate}
\end{thm}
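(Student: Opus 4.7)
The plan is to run all four parts through condition $\MM 3$ applied to the one-step biadmissible sequences $(x)$ for $x \in \ex_0$. Each such sequence is $(g,\S_0,\S)$-biadmissible because $g$ is the identity on $\x_0 = \ex_0 \sqcup \fx_0 \sqcup \ex_2$ and $\ex_0 \subset \ex$, so I get $\mu_{x,\S_0}(x) = \mu_{x,\S}(x)$ as Laurent polynomials in the common ambient field. Matching the two monomial sums via algebraic independence of the initial cluster variables produces, for each $x \in \ex_0$, a sign $\epsilon(x) \in \{\pm 1\}$ with $b^0_{yx} = \epsilon(x)\,b_{yx}$ for all $y \in \x_0$ and the crucial vanishing $b_{yx} = 0$ for all $y \in \ex_1 \sqcup \fx_1$. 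The sign pattern is constrained by skew-symmetrizability: if $x,x' \in \ex_0$ are adjacent in the principal part of $B_0$, then $b^0_{xx'}$ and $b^0_{x'x}$ have opposite signs, whence $\epsilon(x')b_{xx'}$ and $\epsilon(x)b_{x'x}$ do too, forcing $\epsilon(x) = \epsilon(x')$. Thus $\epsilon$ is constant on each connected component of the principal part of $\S_0$.

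Part~(1) follows: indecomposability of $\S_0$ pins $\epsilon$ to one value, so $\S_0$ or $\S_0^{op}$ is a subseed of $\S_f$, and the vanishing $b_{yx} = 0$ for $y \in \ex_1 \sqcup \fx_1$ makes it a full indecomposable component; the leftover data is exactly $\S_1$, so $\S_f$ is the gluing along the shared frozen block $\ex_2$. Part~(2) is obtained by running the same column-by-column argument one indecomposable component of $\S_0$ at a time and matching the remaining $\ex_1$-side components of $\S_f$ with those of $\S_1$; assembling these matches yields the bijection $h$ with the prescribed behavior on each side. Part~(3) is almost immediate from Theorem~\ref{ideal morp}(1): $g(\ex_0) = \ex_0 \subset \ex$ makes $g$ ideal, so $g(\A(\S_0)) = \A(g(\S_0))$, and injectivity promotes the corestriction to a rooted cluster isomorphism $\A(\S_0) \cong \A(g(\S_0))$.

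Part~(4) has two directions. For $(\Leftarrow)$, assuming $\ex_2 = \emptyset$ I will construct a retraction $\pi\colon \A(\S) \to \A(\S_0)$ by $\pi|_{\x_0} = \mathrm{id}$ and $\pi(y) = 1$ for every $y \in \ex_1 \sqcup \fx_1$. Conditions $\MM 1$ and $\MM 2$ are immediate, and $\MM 3$ reduces to biadmissible sequences lying entirely in $\ex_0$: by part~(1), the $\S$-mutation of any such variable involves only variables in $\x_0$, and the mutation formula is symmetric in its two monomial terms, which absorbs the sign $\epsilon$, so the two sides of $\MM 3$ match after specialization. For $(\Rightarrow)$, given any retraction $\pi$, I observe that $\pi(y) = \pi(g(y)) = y$ for every $y \in \ex_2$, while $\MM 1$ for $\pi$ forces $\pi(\ex_2) \subset \ex_0 \sqcup \Z$, a set disjoint from $\ex_2$, so $\ex_2 = \emptyset$. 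The most delicate step is part~(2): once per-component opposition $\S' \leadsto {\S'}^{op}$ is allowed, I must verify that the gluing of these (possibly opposed) components of $\S_0$ with the components of $\S_1$ along $\ex_2$ reproduces the full list of indecomposable components of $\S_f$ together with the correct attached frozen vertices.
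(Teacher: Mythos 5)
Your argument is correct and follows the paper's own proof in all essentials: parts (1)--(2) via matching the two exchange monomials using algebraic independence (your explicit sign function $\epsilon$ and the skew-symmetrizability step is just a more careful writing of the paper's ``$b^0_{yx}=\pm b_{yx}$, then use connectivity''), part (3) via ideality/injectivity, and the forward direction of (4) via $\MM 1$ forbidding an exchangeable image for the variables of $\ex_2$. The only place you go beyond the paper is the converse of (4), where the paper gives no construction; your retraction specializing $\ex_1\sqcup\fx_1$ to $1$ is the natural one and works precisely because, as you note, no biadmissible sequence for it can ever mutate in an $\ex_1$-direction.
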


\begin{proof}
\begin{enumerate}
     \item For each exchangeable cluster variable $x\in \ex_0$, we have $${\mu_x}(x)=\frac{1}{x}\left(\prod_{\substack{y \in \x_{\S_0}~; \\ b^0_{yx}>0}} y^{b^0_{yx}} + \prod_{\substack{y \in \x_{\S_0}~; \\ b^0_{yx}<0}} y^{-b^0_{yx}}\right)$$
in $\A(\S_0)$, where $b^0_{yx}$ entries in $B_0$.
Then by viewing $g$ as an identity on the set $\x_{\S_0}$, due to $\MM 3$, we have the equalities:
$$\frac{1}{x}\left(\prod_{\substack{y \in \x_{\S_0}~; \\ b^0_{yx}>0}} y^{b^0_{yx}} + \prod_{\substack{y \in \x_{\S_0}~; \\ b^0_{yx}<0}} y^{-b^0_{yx}}\right)=g({\mu_x}(x))={\mu_{g(x)}}(g(x))=\frac{1}{x}\left(\prod_{\substack{y \in \x_{\S_f}~; \\ b_{yx}>0}} y^{b_{yx}} + \prod_{\substack{y \in \x_{\S_f}~; \\ b_{yx}<0}} y^{-b_{yx}}\right),$$ where $b_{yx}$ entries in $B$ and $\x_{\S_f}(=\x_{\S} ~\textrm{as a set})$ is the set of cluster variables of $\S_f$.
Thus a cluster variable $y$ of $\S_f$ is connected with $x$ in $\S_f$ if and only if it belongs to $\x_{\S_0}$ and is connected with $x$ in $\S_0$, and moreover, $b^0_{yx}=b_{yx}$ or $b^0_{yx}=-b_{yx}$. Finally, one of the two seeds $\S_0$ and $\S^{op}_0$ is a subseed of $\S_f$ by the connectivity of $\S_0$. Again by the connectivity, one of these two seeds is an indecomposable component of $\S_f$.

\item For each indecomposable component $\S'$ in $Ind(\S_0)$, the composition of the morphism $g$ and the natural injection from $\A(\S')$ to $\A(\S_0)$ induces an injective rooted cluster morphism from $\A(\S')$ to $\A(\S)$. Thus $\S'$ or $\S'^{op}$ can be viewed as an indecomposable component of $\S_f$ from the first statement. It is clear that $Ind(\S_1)$ is a subset of $Ind(\S_f)$, thus we have an injection $h: Ind(\S_0)\sqcup Ind(\S_1)\longrightarrow Ind(\S)$ satisfies the properties we want. For the bijectivity, it is only need to notice that the set of exchangeable cluster variables arising from the seeds in $Ind(\S_0)\sqcup Ind(\S_1)$ contains the set of exchangeable cluster variables of $\S_f$.

\item This assertion is clear.

\item This easily follows from the fact that it is forbidden in a rooted cluster morphism from mapping an initial exchangeable variable to a frozen one.
\end{enumerate}
\end{proof}

\begin{rem}
\begin{enumerate}\label{rem of inj RCM}
\item The above theorem allows us to find out all the rooted cluster subalgebras of a given rooted cluster algebra up to isomorphism. The third part shows that the rooted cluster subalgebra in the sense of Definition \ref{def of injRCM} coincides with the subcluster algebra defined in section \uppercase\expandafter{\romannumeral4}.1\cite{BIRS09} up to a rooted cluster isomorphism.
\item The fourth part of the above theorem answers the Problem 7.7 in \cite{ADS13}; more precisely, an injective rooted cluster morphism $g$ is a section if and only if $g(\S_0)$ is a subseed of $\S'$.
\end{enumerate}
\end{rem}

The following two theorems describe the relations between ideal rooted cluster morphisms and injective rooted cluster morphisms.
\begin{thm}\label{image of subalgebra}
Given two seeds $\Sigma=(\ex,\fx,B)$ and $\Sigma'=(\ex',\fx',B')$ with  $\x=\ex \sqcup \fx$ and $\x'=\ex' \sqcup \fx'$. Let $f: \A(\S)\rightarrow \A(\S')$ be a rooted cluster morphism.
\begin{enumerate}
\item The rooted cluster algebra $\A(f(\S))$ is a rooted cluster subalgebra of $\A(\S')$ under the natural injection on the initial cluster variables.
\item If $f$ is ideal then it is a composition of a surjective rooted cluster morphism and an injective rooted cluster morphism.
\end{enumerate}
\end{thm}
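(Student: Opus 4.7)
For part (1), I would define $j: \A(f(\S)) \to \A(\S')$ as the ring map induced by the natural inclusion of initial clusters $\x' \cap f(\x) \hookrightarrow \x'$, and verify that it satisfies $\MM 1$--$\MM 3$; injectivity then comes for free because $j$ extends to an injection of the ambient rings. Conditions $\MM 1$ and $\MM 2$ are immediate from the definition of $f(\S)$, so the real content lies in $\MM 3$.

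The key structural fact I will extract first, from $\MM 3$ for $f$, is the following: for every $x \in \ex' \cap f(\ex)$ and every $y' \in \x' \setminus f(\x)$, one has $b'_{y'x} = 0$. To obtain it, I plan to mimic the algebraic-independence argument of Theorem \ref{ideal morp}(1). Pick $x_0 \in \ex$ with $f(x_0) = x$; since $x \in \ex'$, $f(x_0) \neq 0$, so the identity $x_0 \mu_{x_0}(x_0) = P_+ + P_-$ in $\A(\S)$ yields $f(\mu_{x_0}(x_0)) = \frac{1}{x}(f(P_+) + f(P_-))$ in the ambient field of $\S'$, which $\MM 3$ equates with $\mu_{x, \S'}(x)$. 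Comparing monomials using the algebraic independence of the initial cluster variables of $\S'$ then forces every variable appearing in the exchange relation at $x$ in $\S'$ to lie in $f(\x) \cap \x'$. The argument only requires $f(x_0) \neq 0$ rather than full inducibility, which is automatic here since $f(x_0) \in \ex'$.

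Given this structural fact, mutating $B'$ at $x$ leaves every row indexed by $\x' \setminus f(\x)$ unchanged, so the mutation of $B'$ at $x$ restricts cleanly to the mutation of $B'[\x' \cap f(\x)]$ at $x$. A direct computation with the mutation formula shows that the structural fact is itself preserved under each such mutation. Hence by induction along any $(j, f(\S), \S')$-biadmissible sequence, the mutated seeds of $f(\S)$ and of $\S'$ agree on the overlap, which delivers $\MM 3$ for $j$.

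For part (2), assume $f$ is ideal, so $f(\A(\S)) = \A(f(\S))$. I factor $f$ as $\A(\S) \xrightarrow{h} \A(f(\S)) \xrightarrow{j} \A(\S')$, where $h$ is $f$ with codomain restricted to its image (hence surjective by ideality) and $j$ is the injective rooted cluster morphism from part (1). That $h$ is a rooted cluster morphism reduces to checking $\MM 1$--$\MM 3$: conditions $\MM 1$ and $\MM 2$ for $h$ follow from those for $f$ together with the description of $\ex_{f(\S)}$ and $\fx_{f(\S)}$, while for $\MM 3$ the persistence of the structural fact shows that any $(h, \S, f(\S))$-biadmissible sequence is in particular $(f, \S, \S')$-biadmissible, whereupon $\MM 3$ for $f$ combined with the coincidence of mutations from part (1) transfers across. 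The main obstacle throughout is the propagation of the structural fact under iterated mutation; the initial extraction is delicate only in avoiding the hypothesis of inducibility, which is sidestepped because non-vanishing of $f$ is required only at the specific exchangeable variable being mutated.
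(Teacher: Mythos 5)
Your proposal is correct and follows essentially the same route as the paper: the key structural fact (that every variable of $\S'$ occurring in the exchange relation at a vertex of $\ex'\cap f(\ex)$ already lies in $\x'\cap f(\x)$, extracted from \MM 3 plus algebraic independence exactly as in Theorem \ref{ideal morp}(1)) is the same, and the factorization in part (2) through the surjection onto $\A(f(\S))$ followed by the injection of part (1) is the paper's argument verbatim. The only cosmetic difference is that you propagate this fact under mutation by hand, whereas the paper packages the same induction by exhibiting $f(\S)$ as a glue of indecomposable components of the freezing $\S'_f$ and citing Theorem \ref{inj of indec comp} and Definition-Proposition \ref{inj of frozen}.
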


\begin{proof}
\begin{enumerate}
\item We consider the image seed $f(\S)=(\ex'\cap f(\ex),{\x'\cap f(\x)}\setminus {\ex'\cap f(\ex)},B'[\x'\cap f(\x)])$ and the freezing $\S'_f=(\ex' \setminus \ex'_0, \fx' \sqcup \ex'_0, B_f')$ of $\S'$ at $\ex'_0$, where $\ex'_0=\{f(x)\in \ex' | x\in \fx\}$. Then by a same argument used in the proof of Theorem \ref{ideal morp}(1), there exists no $y \in \ex$ such that $f(y)\in \ex'_0$. Thus $f(\ex) \cap \ex'_0=\emptyset$ and $\ex'\cap f(\ex) \subseteq \ex' \setminus \ex'_0$, ${\x'\cap f(\x)}\setminus {\ex'\cap f(\ex)} \subseteq \fx' \sqcup \ex'_0$. Therefore the seed $f(\S)$ is a subseed of $\S'_f$. Given an exchangeable variable $x_1$ of $f(\S)$, we can assume that $x_1=f(x)$ for some $x \in \ex$. Then we have the following equalities:

$$f(\mu_{{x},\S}({x}))=f\left(\frac{1}{x}\left(\prod_{\substack{y \in \x~; \\ b_{yx}>0}} y^{b_{yx}} + \prod_{\substack{y \in \x~; \\ b_{yx}<0}} y^{-b_{yx}}\right)\right)=\frac{1}{x_1}\left(\prod_{\substack{y \in \x~; \\ b_{yx}>0}} f(y)^{b_{yx}} + \prod_{\substack{y \in \x~; \\ b_{yx}<0}} f(y)^{-b_{yx}}\right),$$

$$\mu_{x_1,\S'}(x_1)
=\frac{1}{x_1}\left(\prod_{\substack{y_1 \in \x'~; \\ b'_{{y_1}{x_1}}>0}} y_1^{ b'_{{y_1}{x_1}}} + \prod_{\substack{y_1 \in \x'~; \\ b'_{{y_1}{x_1}}<0}} y_1^{- b'_{{y_1}{x_1}}}\right)~~~and~~~$$

$$\mu_{x_1,f(\S)}(x_1)
=\frac{1}{x_1}\left(\prod_{\substack{y_1 \in \x'\cap f(\x)~; \\ b'_{{y_1}{x_1}}>0}} y_1^{ b'_{{y_1}{x_1}}} + \prod_{\substack{y_1 \in \x'\cap f(\x)~; \\ b'_{{y_1}{x_1}}<0}} y_1^{- b'_{{y_1}{x_1}}}\right).$$
Because $f$ is a rooted cluster morphism, we have $f(\mu_{{x},\S}({x}))=\mu_{x_1,\S'}(x_1)$. Then by a similar trick used in the proof of Theorem \ref{ideal morp}(1) we get $\mu_{x_1,f(\S)}(x_1)=\mu_{x_1,\S'}(x_1)$. Thus for each $y_1\in \x'$ with $b_{y_1x_1} \neq 0$, we have $y_1\in \x' \cap f(\x)$. Therefore, $f(\S)$ is a glue of some indecomposable components of $\S'_f$. Finally, $\A(f(\S))$ is a rooted cluster subalgebra of $\A(\S'_f)$ and thus a rooted cluster subalgebra of $\A(\S')$.
\item Because $f$ is ideal, it induces a surjective rooted cluster morphism $f': \A(\S) \rightarrow \A(f(\S))$. It follows from the above assertion
    that $j: \A(f(\S)) \rightarrow \A(\S')$ is an injective rooted cluster morphism. Thus $f$ is a composition of rooted cluster morphisms $f'$ and $j$.
\end{enumerate}
\end{proof}

\begin{thm}\label{Inverse of rem of ideal}
Let $f:\A(\S) \rightarrow \A(\S')$ be an inducible rooted cluster morphism. Assume that it is a composition of rooted cluster morphisms $f':\A(\S) \rightarrow \A(\S'')$ and $f'':\A(\S'') \rightarrow \A(\S')$, where $f'$ is surjective and $f''$ is injective. Then $\A(\S'')\cong \A(f(\S))$ and $f$ is ideal.
\end{thm}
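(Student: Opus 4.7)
The plan is to exploit the factorization $f = f'' \circ f'$ by invoking Theorem \ref{ideal morp}(3) on the surjective factor $f'$ and Theorem \ref{Prop of inj} together with Theorem \ref{ideal morp}(1) on the injective factor $f''$, and then to identify the image seed $f(\S)$ with $f''(\S'')$.

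First I would verify that $f'$ inherits inducibility from $f$: if $f'(x) = 0$ for some initial exchangeable $x$ of $\S$, then $f(x) = f''(f'(x)) = f''(0) = 0$, contradicting the inducibility of $f$. Hence Theorem \ref{ideal morp}(3) applies, so $f'$ is ideal. The proof of that theorem (via Lemma 3.1 of \cite{ADS13}) furthermore yields $f'(\S) = \S''$, so $f'(\A(\S)) = \A(\S'')$ and in particular $\ex'' \subseteq f'(\ex)$ and $\x'' \subseteq f'(\x)$.

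Next I would show $f(\S) = f''(\S'')$. From \MM 1 and \MM 2 for $f'$, one has $f'(\x) \subseteq \x'' \sqcup \Z$; applying $f''$ and using that $f''$ fixes $\Z$ and is injective yields $f(\x) \subseteq f''(\x'') \sqcup \Z$, hence $\x' \cap f(\x) \subseteq \x' \cap f''(\x'')$ because $\x'$ contains no integers. The reverse inclusion follows from $\x'' \subseteq f'(\x)$, which gives $f''(\x'') \subseteq f(\x)$. An analogous argument handles $\ex' \cap f(\ex) = \ex' \cap f''(\ex'')$, and since the exchange matrix is simply $B'$ restricted to the common set, this identifies $f(\S)$ with $f''(\S'')$ as seeds.

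Finally, Theorem \ref{Prop of inj}(3) applied to the injection $f''$ supplies a rooted cluster isomorphism $\A(\S'') \cong \A(f''(\S'')) = \A(f(\S))$, which is the first conclusion. For the ideality of $f$, Theorem \ref{Prop of inj}(1)--(2) shows that each indecomposable component of $\S''$ (or its opposite, which shares the same cluster algebra and exchangeable set) appears as an indecomposable component of the freezing of $\S'$, whose exchangeable variables lie in $\ex'$; hence $f''(\ex'') \subseteq \ex'$, and Theorem \ref{ideal morp}(1) makes $f''$ ideal. Composing,
\[
f(\A(\S)) = f''(f'(\A(\S))) = f''(\A(\S'')) = \A(f''(\S'')) = \A(f(\S)),
\]
so $f$ is ideal. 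The main point I expect to require care is the bookkeeping in the middle step, particularly keeping track of which images under $f'$ are integers versus cluster variables of $\S''$, and handling the opposite-seed ambiguity that appears in Theorem \ref{Prop of inj}(1) (which is ultimately harmless since opposite seeds have the same rooted cluster algebra and the same exchangeable set).
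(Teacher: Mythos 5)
Your proposal is correct and follows essentially the same route as the paper: factor $f$ as $f''\circ f'$, apply Theorem \ref{ideal morp}(3) to the inducible surjection $f'$ to get $f'(\S)=\S''$, use ideality of the injection $f''$, and identify $f(\S)$ with $f''(\S'')$ to conclude. The only minor differences are that you establish the seed equality $f(\S)=f''(\S'')$ directly from \MM 1, \MM 2 and surjectivity (the paper instead chases cluster variables to get $\x_{f''(\S'')}\subseteq\x_{f(\S)}$), and you source the ideality of $f''$ from Theorems \ref{Prop of inj} and \ref{ideal morp}(1) (via $f''(\ex'')\subseteq\ex'$, which already follows from \MM 1 plus injectivity) where the paper simply cites Corollary 4.5 of \cite{ADS13}.
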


\begin{proof}
Note that $f'$ is a surjective inducible rooted cluster morphism because $f$ is inducible.
Then from Theorem \ref{ideal morp}, $f'$ is ideal and thus $f'(\A(\S))=\A(f'(\S))$. On the one hand, $f'(\S)=\S''$ from Lemma 3.1 in \cite{ADS13}. On the other hand, we have $f''(\A(\S''))=\A(f''(\S''))$ because $f''$ is injective and thus ideal due to Corollary 4.5 in \cite{ADS13}. Therefore there are equalities  $f(\A(\S))=f''f'(\A(\S))=f''(\A(f'(\S)))=f''(\A(\S''))=\A(f''(\S''))$. Thus to prove that $f$ is ideal, that is, $f(\A(\S)=\A(f(\S))$, it is only need to show that $\A(f''(\S''))\subseteq \A(f(\S))$. Let $x \in \x_{f''(\S'')}$ be a cluster variable of $f''(\S'')$ and assume that $y\in \x_{\S''}$ is a cluster variable such that $f''(y)=x$. By Lemma 3.1 in \cite{ADS13}, there is a cluster variable $y' \in \x_{\S}$ such that $f'(y')=y$. Then $x=f(y')\in \x_{f(\S)}$ and thus $\x_{f''(\S'')} \subseteq \x_{f(\S)}$. Similarly, we have $\ex_{f''(\S'')} \subseteq \ex_{f(\S)}$. Therefore $\A(f''(\S''))\subseteq \A(f(\S))$ and $f$ is ideal. The rooted cluster isomorphism $\A(\S'')\cong \A(f(\S))$ is clear.
\end{proof}

For a rooted cluster algebra, we introduce the following notion of complete pairs of subalgebras with given coefficients.
\begin{defn}\label{def of complete pairs}
Let $\S=(\ex, \fx, B)$ be a seed and $\S_f$ be the freezing of $\S$ at a subset $\ex'$ of $\ex$. Denote by $\fx_0$ the set of isolated frozen variables of $\S_f$. Let $\S_1=(\ex_1, \fx_1, B_1)$ and $\S_2=(\ex_2, \fx_2, B_2)$ be two subseeds of $\S_f$. We call the pair $(\A(\S_1), \A(\S_2))$ a complete pair of subalgebras of $\A(\S)$ with coefficient set $\fx \sqcup \ex'$ if the following conditions are satisfied:
\begin{enumerate}
\item both $\S_1$ and $\S_2$ are gluings of some indecomposable components of $\S_f$;
\item $\ex_1\cap \ex_2 = \emptyset$ and $\ex = \ex_1 \sqcup \ex_2 \sqcup \ex'$;
\item $\fx_0\subseteq \fx_1\cap\fx_2$.
\end{enumerate}
\end{defn}

In the rest of this subsection, we consider the specialization of a seed at some cluster variables. Let $\S=(\ex,\fx,B)$ be a seed with $\x=\ex\sqcup \fx$. Let $\x'=\ex'\sqcup \fx'$ be a subset of $\x$ with $\ex'\subseteq \ex$ and $\fx'\subseteq \fx$. Denote by $\S'=(\ex',\fx',B')$ the subseed of $\S$.

\begin{defn}\label{def of spec}
The map $f: \x \rightarrow \x'\sqcup \Z$ is called a specialization of $\S$ at $\x\setminus\x'$, which is given by:
$$f(x) = \left\{\begin{array}{ll}
						x &  \textrm{ if } x \in \x'~;\\
						n &  \textrm{ if } x \in {\x\setminus\x'}~.
					\end{array}\right.$$	
If the map $f$ induces a rooted cluster morphism $\tilde{f}$ from $\A(\S)$ to $\A(\S')$, then we call $\tilde{f}$ the specialization of rooted cluster algebra $\A(\S)$ at $\x\setminus\x'$. If $\x\setminus\x'=\{x\}$ for some $x$, then we call $f$ a simple specialization.
\end{defn}

The simple specialization is also defined in \cite{ADS13} and it is needed that $f(x)\neq 0$ for $x \in \x \setminus \x'$. It is well-known that if $x$ is frozen, then specializing $x$ to 1 induces a rooted cluster morphism from $\A(\S)$ to $\A(\S')$, see for instance \cite{FZ07}. Generally we have the following analogue:

\begin{prop}\label{prop of spec for frozen}
Let $f$ be a specialization defined as above. Assume that $\ex'=\ex$ and $f(x)=1$ for all cluster variables $x\in {\x\setminus\x'}$, then the map $f$ induces a rooted cluster morphism $\tilde{f}$ from $\A(\S)$ to $\A(\S')$.
\end{prop}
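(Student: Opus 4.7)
The plan is to extend the set-theoretic map $f$ to a ring homomorphism $\tilde{f}\colon\L_{\S,\Q}\to\L_{\S',\Q}$ and then verify the three axioms of Definition \ref{def of RCM} together with the inclusion $\tilde{f}(\A(\S))\subseteq\A(\S')$. Since the hypothesis $\ex'=\ex$ forces every specialized variable to lie in $\fx\setminus\fx'$, and none of these are inverted in $\L_{\S,\Q}$, the rule $x\mapsto x$ for $x\in\x'$ and $x\mapsto 1$ for $x\in\fx\setminus\fx'$ extends to a well-defined ring homomorphism of the ambient rings. Axioms $\MM 1$ and $\MM 2$ are then immediate: $f(\ex)=\ex=\ex'$ and $f(\fx)\subseteq\fx'\sqcup\{1\}\subseteq\x'\sqcup\Z$.

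The heart of the argument is $\MM 3$, which I plan to establish by induction on the length $l$ of an $\S$-admissible sequence $(x_1,\ldots,x_l)$, proving simultaneously that $(f(x_1),\ldots,f(x_l))$ is $\S'$-admissible and that
\[\tilde{f}\bigl(\mu_{x_l}\circ\cdots\circ\mu_{x_1,\S}(y)\bigr)=\mu_{f(x_l)}\circ\cdots\circ\mu_{f(x_1),\S'}\bigl(f(y)\bigr)\]
holds for every $y\in\x$. For the base case $l=1$ the only nontrivial instance is $y=x_1$; I would expand the mutation formula of Definition \ref{def of mutation}, note that $\tilde{f}$ sends every factor $z\in\fx\setminus\fx'$ to $1$, and use the fact that $\S'$ is the subseed on $\x'$ so that $b'_{zx_1}=b_{zx_1}$ for $z\in\x'$. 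The two surviving specialized exchange monomials then coincide exactly with those defining $\mu_{x_1,\S'}(x_1)$, yielding the desired equality. A crucial byproduct of this computation is that $\tilde{f}$ sends the newly produced exchangeable variable $x_1^{\ast}$ of $\mu_{x_1}(\S)$ to the newly produced exchangeable variable $x_1^{\ast\ast}$ of $\mu_{x_1}(\S')$, which is exactly what allows the induction to proceed.

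For the inductive step I would first observe that $\mu_{x_1}(\S')$ is again a subseed of $\mu_{x_1}(\S)$ of the same type: the exchangeable set is preserved, $\fx'$ remains contained in $\fx$, and item (3) of Definition \ref{def of mutation} shows that the mutated exchange matrix of the subseed equals the restriction of the mutated ambient matrix. After the identification $x_1^{\ast}\leftrightarrow x_1^{\ast\ast}$ provided by the base case, the corresponding specialization map for the new pair $(\mu_{x_1}(\S),\mu_{x_1}(\S'))$ agrees with the restriction of $\tilde{f}$, so applying the inductive hypothesis to the shorter $\mu_{x_1}(\S)$-admissible sequence $(x_2,\ldots,x_l)$ produces the desired identity; the admissibility of $(f(x_1),\ldots,f(x_l))$ falls out along the way. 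The same induction proves $\tilde{f}(\A(\S))\subseteq\A(\S')$, since each cluster variable of $\A(\S)$ is either sent to a cluster variable of $\A(\S')$ or to $1$.

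The main obstacle I anticipate is the bookkeeping in the inductive step, namely verifying that mutation of a subseed really yields the subseed of the mutation in this specialized setting, and confirming that the specialization map transports cleanly through mutation after the identification of mutated exchangeable variables. Once that compatibility is in hand, the remainder reduces to the mutation-formula computation carried out in the base case.
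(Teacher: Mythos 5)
Your proof is correct and follows the route the paper intends: the paper itself gives no details, simply deferring to the simple-specialization argument of \cite{ADS13}, and your induction on the length of a biadmissible sequence, using that $\mu_{x_1}(\S')$ remains the subseed of $\mu_{x_1}(\S)$ on $\x'$ and that frozen variables sent to $1$ drop out of the exchange binomials, is exactly the computation being invoked. The key observations you isolate --- that $\ex'=\ex$ makes every $\S$-admissible sequence biadmissible, and that specializing to $1$ (rather than $0$ or another integer) preserves both exchange monomials --- are precisely what makes the statement work.
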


\begin{proof}
Similar to the case of simple specialization defined in \cite{ADS13}.
\end{proof}

\section{Relations with cluster structures in $2$-Calabi-Yau triangulated categories}\label{Sec Tri}

In this section, we consider the cluster substructure of a functorially finite extension-closed subcategory in a $2$-Calabi-Yau triangulated category which has a cluster structure given by cluster tilting subcategories. We establish a connection between functorially finite extension-closed subcategories, cluster substructures and rooted cluster subalgebras.

\subsection{Preliminaries of $2$-Calabi-Yau triangulated categories}\label{Sec TriP}

Let $k$ be an algebraically closed field. Let $\C$ be a triangulated category, and we always assume that it is $k$-linear, Hom-finite and Krull-Schmidt. Denote by $[1]$ the shift functor in $\C$. $\C$ is called $2$-Calabi-Yau if there is a bifunctorial isomorphism $\Ext^1_{\C}(X, Y)\cong D\Ext^1_{\C}(Y, X)$ for all objects $X$ and $Y$ in $\C$, where $\textrm{D}=\Hom_{k}(-,k)$ is the $k$-duality. For a subcategory $\B$ of $\C$, we always mean that $\B$ is a full additive category which closed under direct sums, direct summands and isomorphisms. Denote by $ind\B$ the set of isomorphism classes of indecomposable objects in $\B$. For an object $X$ in $\C$, we denote by $add X$ the subcategory additive generated by $X$. For a set $\mathcal{X}$ of isomorphism classes of indecomposable objects in $\C$, we denote by $add \mathcal{X}$ the subcategory additive generated by $\mathcal{X}$. In this way, we have for a subcategory $\B$, $\B =add(ind\B)$. For subcategories $\B$ and $\D$ with $ind \B\bigcap ind\D=\emptyset$, we denote by $\B\oplus \D$ the subcategory $add(ind\B\cup ind\D)$. A subcategory $\B$ of $\C$ is called contravariantly finite if for each object $C$ in $\C$, there exist an object $B \in \B$ and a morphism $f \in \Hom_{\C}(B,C)$ such that the morphism $f\cdot :\Hom_{\C}(-,B)\rightarrow \Hom_{\C}(-,C)$ is surjective on $\B$, where $f\cdot$ maps any morphism $g \in \Hom_{\C}(B',B)$ to $fg$ for each $B' \in \B$. Here, $f$ is called a right $\B$-approximation of the object $C$. Moreover, if $f: B\rightarrow C$ has no direct summand of the form $D \rightarrow 0$ as complex, we say $f$ right minimal. A convariantly finite subcategory, a left approximation and a left minimal map are defined in a dual way. If a subcategory is both contravariantly finite and convariantly finite, then we call it functorially finite. A functorially finite subcategory $\T$ of $\C$ is called a cluster tilting subcategory if the following conditions are satisfied:

\begin{enumerate}
\item $\T$ is rigid, that is, $\Ext^1_{\C}(T, T')=0$ for all $T, T' \in \T$;
\item For an object $X\in \C$, $X\in \T$ if and only if $\Ext^1_{\C}(X,T)=0$ for any $T\in \T$.
\end{enumerate}
An object $T$ in $\C$ is called a cluster tilting object if $\T=add T$ is a cluster tilting subcategory. The cluster tilting subcategories in a $2$-Calabi-Yau triangulated category have many remarkable properties. For example the property given in the following
\begin{lem} (Theorem 5.3 \cite{IY08})
Let $\T$ be a cluster tilting subcategory in a $2$-Calabi-Yau triangulated category $\C$ and $T$ be an indecomposable object in $\T$. Then there is a unique (up to isomorphism) indecomposable object $T'\ncong T$ in $\C$ such that ${\mu_T}(\T):= add(ind\T \setminus \{T\} \cup \{T'\})$ is a cluster tilting subcategory in $\C$, where $T'$ and $T$ form two triangles:
$${T}\s{f}\longrightarrow E\s{g}\longrightarrow {T'}\s{}\longrightarrow {T[1]}~~~~~~~~~~~~ and$$
$${T'}\s{s}\longrightarrow E'\s{t}\longrightarrow {T}\s{}\longrightarrow {T'[1]}~~~~~~~~~~~~~~~~~~$$
with minimal right $add(ind\T \setminus \{T\})$-approximation $g$ and $t$ and minimal left $add(ind\T \setminus \{T\})$-approximation $f$ and $s$.
\end{lem}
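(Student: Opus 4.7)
The plan is to construct $T'$ via approximation triangles and then verify the two defining axioms of a cluster tilting subcategory. Set $\T' := \add(\ind\T \setminus \{T\})$ so that $\T = \T' \oplus \add(T)$. Since $\T$ is functorially finite in $\C$ and $\add(T)$ is trivially functorially finite, $\T'$ is also functorially finite. Choose a minimal left $\T'$-approximation $f : T \to E$; since $T \notin \T'$, this map is not split mono. Complete it to a triangle $T \s{f}{\longrightarrow} E \longrightarrow T' \longrightarrow T[1]$ and take $T'$ to be its third term. Symmetrically, starting from a minimal right $\T'$-approximation $t : E' \to T$, complete to a triangle $T^{\sharp} \longrightarrow E' \s{t}{\longrightarrow} T \longrightarrow T^{\sharp}[1]$. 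Using the universal property of triangles together with the rigidity of $\T$ and the approximation property, I would show $T^{\sharp} \cong T'$, so the same object appears in both triangles. Minimality of $f$ then forces $T'$ to be indecomposable (a nontrivial splitting $T' = T_1' \oplus T_2'$ would reduce $f$) and to lie outside $\T$ (else the first triangle would split, forcing $T \in \T'$, a contradiction).

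The core of the argument is to verify that $\mu_T(\T) := \T' \oplus \add(T')$ satisfies the two cluster tilting axioms. For rigidity, applying $\Hom_\C(-,T'')$ with $T'' \in \T'$ to the first triangle yields $\Ext^1_\C(T', T'') = 0$ by combining the surjectivity of $\Hom_\C(E,T'') \to \Hom_\C(T,T'')$ coming from the left approximation property of $f$ with the vanishing $\Ext^1_\C(E,T'') = \Ext^1_\C(T,T'') = 0$; the 2-Calabi-Yau duality $\Ext^1_\C(X,Y) \cong D\Ext^1_\C(Y,X)$ then gives $\Ext^1_\C(T'', T') = 0$ as well. To obtain $\Ext^1_\C(T', T') = 0$, I would play the two triangles against each other: apply $\Hom_\C(T',-)$ to the first triangle, then use the second triangle and 2-Calabi-Yau duality to identify and cancel the obstruction terms. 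For the maximality axiom, suppose $X \in \C$ satisfies $\Ext^1_\C(X, \mu_T(\T)) = 0$. Applying $\Hom_\C(X,-)$ to the second triangle, combined with $\Ext^1_\C(X, T') = \Ext^1_\C(X, E') = 0$, shows $\Ext^1_\C(X, T) = 0$; hence $\Ext^1_\C(X, \T) = 0$ and the cluster tilting property of $\T$ gives $X \in \T$. A separate diagram chase rules out $T$ as a summand of such an $X$, so $X \in \T' \subseteq \mu_T(\T)$.

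For uniqueness, any indecomposable $T'' \ncong T$ for which $\T' \oplus \add(T'')$ is cluster tilting must satisfy $\Ext^1_\C(T'',\T') = 0$ and hence fit into a triangle of the same shape as the first one above; rigidity and minimality of the approximation force $T'' \cong T'$. The main obstacle is the rigidity computation $\Ext^1_\C(T',T') = 0$: the 2-Calabi-Yau hypothesis is genuinely needed here, since neither triangle alone suffices, and the two long exact sequences must be balanced against the duality. A secondary bookkeeping step is confirming that the map $g$ in the first triangle is a minimal right $\T'$-approximation of $T'$ (and dually for $t$ and the map $s$ in the second triangle); this is dualized from the minimal left approximation property used in the construction, together with the vanishing $\Ext^1_\C(\T', T) = 0$ coming from rigidity of $\T$.
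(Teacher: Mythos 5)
The paper does not prove this lemma at all: it is quoted directly from Iyama--Yoshino (Theorem 5.3 of \cite{IY08}), so there is no internal argument to measure yours against. For what it is worth, your strategy --- realize $T'$ as the cone of a minimal left $\add(\ind\T\setminus\{T\})$-approximation and verify the two cluster tilting axioms by hand --- is the route taken for cluster categories in \cite{BMRRT06}, whereas Iyama--Yoshino handle the general $2$-Calabi--Yau case by passing to the subfactor category ${}^{\bot}\mathcal{D}[1]/\mathcal{D}$ with $\mathcal{D}=\add(\ind\T\setminus\{T\})$; there $T$ becomes an indecomposable cluster tilting object, the statement collapses to showing that such a category has exactly two cluster tilting subcategories ($\add T$ and $\add T[1]$), and the indecomposability of $T'$, the uniqueness, and the two exchange triangles all fall out of the correspondence of Theorem 4.9 in \cite{IY08}. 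That reduction is precisely what makes the points you gloss over come for free.

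Two steps of your sketch are genuinely broken rather than merely compressed. First, the maximality axiom: from $\Ext^1_\C(X,T')=\Ext^1_\C(X,E')=0$ and the second triangle you cannot deduce $\Ext^1_\C(X,T)=0$, because the relevant segment of the long exact sequence is $\Ext^1_\C(X,E')\to\Ext^1_\C(X,T)\to\Hom_\C(X,T'[2])$, and the term $\Hom_\C(X,T'[2])$ is controlled by nothing in a $2$-Calabi--Yau category; attempting the dual computation via $\Ext^1_\C(T,X)$ runs into the same obstruction and becomes circular. Closing this gap requires either the subfactor reduction or an argument using a $\T$-resolution $T_1\to T_0\to X\to T_1[1]$ of $X$. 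Second, the indecomposability of $T'$: minimality of $f$ constrains $E$, not the cone, and a triangle whose third term decomposes need not split into a direct sum of triangles, so ``a nontrivial splitting of $T'$ would reduce $f$'' is not an argument. (Ironically, the step you single out as the main obstacle, $\Ext^1_\C(T',T')=0$, is the one that does go through directly from the first triangle: $\Hom_\C(T,T')\to\Ext^1_\C(T',T')$ is surjective because $\Ext^1_\C(E,T')\cong D\Ext^1_\C(T',E)=0$, every $\phi\colon T\to T'$ factors through $E$ because $\Ext^1_\C(T,T)=0$, and $f[1]\circ h=0$.) Since the paper is citing the result rather than reproving it, the appropriate fix is to keep the citation; a self-contained proof should follow the subfactor-category argument of \cite{IY08}.
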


The subcategory $\mu_{T}(\T)$ is the mutation of $\T$ at $T$. The first triangle is the right exchange triangle of $T$ in $\T$ and the second one is the left exchange triangle of $T$ in $\T$. We say that a cluster tilting subcategory $\T'$ is reachable from $\T$, if it can be obtained by a finite sequence of mutations from $\T$. A rigid object in $\C$ is called reachable from $\T$ if it belongs to a cluster tilting subcategory which is reachable from $\T$. Denote by $\R(\T)$ the subcategory of $\C$ additive generated by rigid objects in $\C$ which are reachable from $\T$.

\medskip

For a cluster tilting subcategory $\T$, one can associate it a quiver $Q(\T)$ as follows: the vertices of $Q(\T)$ are the isomorphism classes of indecomposable objects in $\T$ and the number of arrows from $T_i$ to $T_j$ is given by the dimension of the space $irr(T_i,T_j)=rad(T_i,T_j)/rad^2{(T_i,T_j)}$ of irreducible morphisms , where $rad( , )$ is the radical in $\T$.
The existence of the exchange triangles for each indecomposable object in $\T$ guarantees that $Q(\T)$ is a locally finite quiver.

\medskip

The cluster structure of a triangulated category is defined in \cite{BIRS09} and also be studied in \cite{FK10}. For the general definition of a cluster structure, we refer to \cite{BIRS09}. In our settings, we use Theorem II 1.6 in \cite{BIRS09} to define the cluster structure.

\begin{defn}
We say that $\C$ has a cluster structure given by its cluster tilting subcategories if the quiver $Q(\T)$ has no loops nor $2$-cycles for each cluster tilting subcategory $\T$ of $\C$.
\end{defn}

We recall the definitions of cotorsion pairs, t-structures and relative cluster tilting subcategories in cotorsion pairs from \cite{IY08,Na11,ZZ12,BBD81,BR07}.
\begin{defn}
\begin{enumerate}
\item A pair $(\X,\Y)$ of functorially finite subcategories of $\C$ is called a cotorsion pair if $\Ext^1_{\C}(\X,\Y)=0$ and $$\C=\X \ast \Y[1]:=\{Z \in \C ~|~ \exists ~ \textrm{a triangle}~ X\s{}\longrightarrow Z\s{}\longrightarrow Y[1]\s{}\longrightarrow X[1]
~ \textrm{in}~ \C ~ \textrm{with}~ X \in \X, Y \in \Y \}.$$
We call the subcategory $\X$ ($\Y$ respectively) the torsion subcategory (torsionfree subcategory respectively) of $(\X,\Y)$ and $\I=\X\cap \Y$ the core of $(\X,\Y)$.
\item \cite{BBD81,BR07} A cotorsion pair $(\X,\Y)$ in $\C$ is called a t-structure if $\X$ is closed under [1](equivalently $\Y$ is closed under [-1]).
\item \cite{ZZ12} Let $\X$ be a subcategory of $\C$. A functorially finite subcategory $\mathcal{D}$ of $\X$ is called a $\X$-cluster tilting subcategory provided that for an object $D$ of $\X$, $D \in \D$ if and only if $\Ext^1_{\C}(X,D)=0$ (thus $\Ext^1_{\C}(D,X)=0$) for any object $X \in \X$.
\end{enumerate}
\end{defn}

\begin{rem}
\begin{enumerate}
\item The conditions in the definition of cotorsion pairs $(\X,\Y)$ above is stronger than the usual one \cite{IY08}), here we always assume that $\X$ and $\Y$ are functorially finite. For $2-$Calabi-Yau triangulated categories with cluster tilting objects, it was proved that for any cotorsion pairs  $(\X,\Y)$ in the usual sense \cite{IY08},  $\X$ and $\Y$ are functorially finite.
\item It can be easily derived from the $2$-Calabi-Yau property of $\C$ that if $(\X,\Y)$ is a cotorsion pair of $\C$, then $(\Y,\X)$ is also a cotorsion pair. Due to the symmetry, $\X$ and $\Y$ possess similar properties. Thus for the convenience, we only state definitions and properties about torsion subcategory $\X$ in the following.
\end{enumerate}
\end{rem}

\subsection{Cluster structures in subfactor triangulated categories}\label{Sec TriSubfactor}
Let $\I$ be a functorially finite rigid subcategory in $\C$.
Then the subfactor category $\C'=^{\bot}\I[1]/\I$ is a $2$-Calabi-Yau triangulated category with triangles and cluster tilting subcategories induced from triangles and cluster tilting subcategories of $\C$ respectively (Theorem 4.2 in \cite{IY08}). For an object $X\in {^{\bot}\I[1]}$, we denote also by $X$ the object in the quotient category $\C'$. For a morphism $f\in \Hom_\C(X_1,X_2)$ with $X_1, X_2\in {^{\bot}\I[1]}$, we denote by $\underline{f}$ the residue class of $f$ in the quotient category $\C'$.
Now assume that $\C$ has a cluster structure, we show in this subsection that the subfactor category $^{\bot}\I[1]/\I$ inherits the cluster structure from $\C$.

\begin{lem}\label{lem in 3.2}
Let $\T$ be a cluster tilting subcategory in $\C'$ and $T$ be an indecomposable object in $\T$. Then $\T\oplus\I$ is a cluster tilting subcategory of $\C$.
\begin{enumerate}
\item Let triangles
$${T}\longrightarrow E\longrightarrow {T'}\longrightarrow {T\langle1\rangle}~~~~~~~~~~~~ and$$
$${T'}\longrightarrow E'\longrightarrow {T}\longrightarrow {T'\langle1\rangle}~~~~~~~~~~~~~~~~~~$$
be the exchange triangles of $T$ in $\T$, where $\langle1\rangle$ is the shift functor in $\C'$.
Then in the triangulated category $\C$ the exchange triangles of $T$ in $\T\oplus\I$ are of the forms
$$\divideontimes ~~~~~~~~~~~~~~~~~{T}\longrightarrow E\oplus I\longrightarrow {T'}\longrightarrow {T[1]}~~~~~~~~~~~ and~~~~~~~~~~~~~~~~~~~~~~~~$$
$${T'}\longrightarrow E'\oplus I'\longrightarrow {T}\longrightarrow {T'[1]},~~~~~~~~~~~~~~~~~~$$
where $I$ and $I'$ are both in $\I$.
\item Let triangles
$${T}\longrightarrow E\oplus I\longrightarrow {T'}\longrightarrow {T[1]}~~~~~~~~~~~ and~~~~$$
$${T'}\longrightarrow E'\oplus I'\longrightarrow {T}\longrightarrow {T'[1]}~~~~~~~~~~~~~~~~~~$$
be the exchange triangles of $T$ in $\T\oplus\I$, where $I$ and $I'$ are both in $\I$, and $E$ and $E'$ have no direct summands in $\I$.
Then in the triangulated category $\C'$, they respectively induce
$${T}\longrightarrow E\longrightarrow {T'}\longrightarrow {T\langle1\rangle}~~~~~~~~~~~~ and$$
$${T'}\longrightarrow E'\longrightarrow {T}\longrightarrow {T'\langle1\rangle}~~~~~~~~~~~~~~~~~~$$
as the exchange triangles of $T$ in $\T.$
\end{enumerate}
\end{lem}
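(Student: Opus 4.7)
The first task is to establish that $\T\oplus\I$ is a cluster tilting subcategory of $\C$. This is essentially the correspondence of Iyama--Yoshino between cluster tilting subcategories of $\C$ containing $\I$ and cluster tilting subcategories of the subfactor $\C'$. I would verify both conditions directly. For rigidity: $\Ext^1_\C(\I,\I)=0$ since $\I$ is rigid; $\Ext^1_\C(\T,\I)=0$ by the very definition $\T\subset{}^\bot\I[1]$; the dual vanishes by the $2$-Calabi--Yau property; and $\Ext^1_\C(\T,\T)$ equals $\Ext^1_{\C'}(\T,\T)=0$ because Hom-spaces of objects in ${}^\bot\I[1]$ are related to those in $\C'$ only by modding out morphisms through $\I$, which does not affect $\Ext^1$ for objects orthogonal to $\I[1]$. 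For maximality: if $X\in\C$ satisfies $\Ext^1_\C(X,\T\oplus\I)=0$, then $X\in{}^\bot\I[1]$, and the image of $X$ in $\C'$ is rigid against $\T$, hence lies in $\T$ by cluster tiltingness of $\T$ in $\C'$; this forces $X\in\T\oplus\I$.

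For part (1), I would begin with the exchange triangles of $T$ in $\T$ inside $\C'$. Because $\T\oplus\I$ is cluster tilting in $\C$, $T$ admits exchange triangles
$$T\to F\to T^{*}\to T[1]\quad\text{and}\quad T^{*}\to F^{*}\to T\to T^{*}[1]$$
in $\C$ with $F\to T^{*}$ and $F^{*}\to T$ minimal right $(\T\oplus\I)\setminus\{T\}$-approximations. Decompose $F=F_0\oplus I$ with $F_0$ having no summand in $\I$ and $I\in\I$. The key step is to apply the quotient functor $\C\to\C'$: by the construction of the triangulated structure on $\C'$ (IY08, Theorem 4.2), the $\C$-triangle with middle term $F_0\oplus I$ descends to a $\C'$-triangle $T\to F_0\to T^{*}\to T\langle1\rangle$. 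An argument on minimality of approximations (passing between $\C$ and $\C'$ using that morphisms in ${}^\bot\I[1]$ factoring through $\I$ die in $\C'$) shows that $F_0\to T^{*}$ is the minimal right $(\T\setminus\{T\})$-approximation of $T^{*}$ in $\C'$. By the uniqueness of the exchange triangle in $\C'$, we conclude $T^{*}\cong T'$ and $F_0\cong E$, establishing the form $\divideontimes$. The second exchange triangle is handled symmetrically (or by $2$-Calabi--Yau duality).

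For part (2), I would run the same comparison in reverse. Start with the $\C$-triangle $T\to E\oplus I\to T'\to T[1]$ where $E$ has no $\I$-summand, and apply the quotient to $\C'$: the middle term becomes $E$, producing a triangle $T\to E\to T'\to T\langle1\rangle$ in $\C'$. To identify this with the exchange triangle of $T$ in $\T$ inside $\C'$, it suffices to check that $E\to T'$ is the minimal right $(\T\setminus\{T\})$-approximation of $T'$ in $\C'$. Given any $\underline{g}\colon T_0\to T'$ with $T_0\in\T\setminus\{T\}$, lift $g$ to $\C$; by the approximation property of $E\oplus I\to T'$ in $\C$, $g$ factors through $E\oplus I$, and the component into $I$ becomes zero in $\C'$, so $\underline{g}$ factors through $E$. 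Minimality follows because any left-inverse retract on $E$ in $\C'$ would combine with the $\I$-summand $I$ to yield a retract of $E\oplus I$ in $\C$, contradicting the minimality in $\C$.

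The main obstacle in the argument is the careful comparison between the shift $[1]$ of $\C$ and the shift $\langle1\rangle$ of $\C'$, since $\langle1\rangle$ is defined by a further left $\I$-approximation triangle $T\to I^{T}\to T\langle1\rangle\to T[1]$. The extra $\I$-summand $I$ appearing in the exchange triangles in $\C$ is precisely what reconciles these two shifts; handling this cleanly, together with transferring minimality back and forth between $\C$ and the quotient, is the technical heart of the proof, while everything else is a routine application of Iyama--Yoshino's subfactor construction together with the uniqueness of exchange triangles.
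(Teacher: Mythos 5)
Your proposal is correct, but for part (1) it runs in the opposite direction from the paper. The paper lifts the exchange triangle of $T$ in $\T$ from $\C'$ up to $\C$ using the explicit construction of the triangulated structure on ${}^{\bot}\I[1]/\I$ (the commutative diagram built from the left $\I$-approximation $T\to I_0$), then proves that the resulting map $T\to E\oplus I_1$ is a left $\add(\ind(\T\oplus\I)\setminus\{T\})$-approximation in $\C$ by treating targets in $\I$ and targets in $\T$ separately, and finally extracts the exchange triangle in $\C$ as a direct summand of the lifted triangle. You instead start from the exchange triangles of $T$ in $\T\oplus\I$ inside $\C$ (available once $\T\oplus\I$ is known to be cluster tilting), push them down to $\C'$, and identify them with the given triangles by uniqueness of minimal approximations; this makes (1) and (2) two instances of a single descent argument, which is exactly the argument the paper uses for part (2). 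Both routes are sound: the paper's version exhibits concretely how the extra summand $I$ arises from the $\I$-approximation of $T$, while yours is shorter and more uniform. Two small points to tighten. First, the assertion that $\T\oplus\I$ is cluster tilting should also address functorial finiteness of $\T\oplus\I$ in $\C$ (the paper sidesteps your direct verification entirely by citing Theorem 4.9 of Iyama--Yoshino, which covers all of this). Second, the transfer of minimality from $(a_1~a_2)$ in $\C$ to $\underline{a_1}$ in $\C'$ deserves its one-line justification: a summand of $\underline{a_1}$ of the form $0\to D$ means the corresponding component of $a_1$ factors through some $I''\in\I$, and since the map $T\to I''$ itself factors through the approximation $(a_1~a_2)$, this produces a non-minimal summand of $(a_1~a_2)$ in $\C$, a contradiction.
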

\begin{proof}
\begin{enumerate}
\item It follows from \cite{IY08}(Theorem 4.9) that $\T\oplus\I$ is a cluster tilting subcategory of $\C$. Because ${T}\s{\underline{a_1}}\longrightarrow E\longrightarrow {T'}\longrightarrow {T\langle1\rangle}$ is a triangle in $\C'$, from the triangulated structure of $\C'$ (see section 4 in \cite{IY08}), we have the following commutative diagram of morphisms between triangles in $\C$
$$\begin{array}{ccccccc}
\bigstar~~~~~~~~~~~~~~~~~T& \s{(a_1~ a_2)}\longrightarrow&E\oplus I_1&\longrightarrow&T'\oplus I_2&\longrightarrow&T[1]~~~~\\
~~~~~~~~~~~~~~~~~~~~~~\parallel&&~~~~~~~\downarrow \begin{pmatrix} b_1\\b_2 \end{pmatrix}&&\downarrow&&\parallel~~~~\\
~~~~~~~~~~~~~~~~~~~~~~T&\s{\alpha}\longrightarrow&I_0&\longrightarrow&{T\langle1\rangle}&\longrightarrow&T[1],~~~~\\
\end{array}$$
where $\alpha$ is a left $\I$-approximation in $\C$, and $I_1$ and $I_2$ are in $\I$.
We claim that $(a_1~ a_2)$ is a left $add(ind(\T\oplus\I) \setminus \{T\})$-approximation in $\C$. Let $T_0$ be an indecomposable object in $add(ind(\T\oplus\I) \setminus \{T\})$ and $f$ be a morphism in $\Hom_\C(T, T_0)$. If $T_0 \in \I$, then there exists $g\in \Hom_\C(I_0, T_0)$ such that $f=g\alpha$ because $\alpha$ is a left $\I$-approximation. Thus we have $f=ga_1b_1+ga_2b_2$, that is, $f$ factor through $(a_1~ a_2)$. If $T_0 \in add(ind\T \setminus \{T\})$, then there exists $\underline{h}\in \Hom_{\C'}(E, T_0)$ such that $\underline{f}=\underline{h}\underline{a_1}$ since $\underline{a_1}$ is a left $add(ind\T \setminus \{T\})$-approximation in $\C'$. Thus there exists a morphism $f'$ factor through $\I$ such that $f=ha_1+f'$ in $\C$. Then from the discussion above, $f'$ and thus $f$ factor through $(a_1~a_2)$. It follows that $(a_1~ a_2)$ is a left $add(ind(\T\oplus\I) \setminus \{T\})$-approximation in $\C$. Therefore the right exchange triangle of $T$ in $\T\oplus\I$ is a direct summand of the triangle $\bigstar$ as a complex. Note that each indecomposable direct summand of $E$ is not in $add I_2$, then it belongs to the middle term of the right exchange triangle of $T$ in $\T\oplus\I$. Therefore the right exchange triangle of $T$ is of the form of triangle $\divideontimes$. Similarly we can prove the case of the left exchange triangle of $T$ in $\T\oplus\I$.
\item The right exchange triangle ${T}\s{(a_1~ a_2)}\longrightarrow E\oplus I\longrightarrow {T'}\longrightarrow {T[1]}$ in $\C$ induces an triangle ${T}\s{\underline{a_1}}\longrightarrow E\longrightarrow {T'}\longrightarrow {T\langle1\rangle}$ in $\C'$\cite{IY08}. It is easy to see that $\underline{a_1}$ is a minimal left $add(ind\T \setminus \{T\})$-approximation in $\C'$ due to $(a_1,a_2)$ is a minimal left $add(ind(\T\oplus\I) \setminus \{T\})$-approximation in $\C$. Therefore by the uniqueness of the minimal left approximation, ${T}\s{\underline{a_1}}\longrightarrow E\longrightarrow {T'}\longrightarrow {T\langle1\rangle}$ is the right exchange triangle of $T$ in $\T$. Similarly we can prove the case of the left exchange triangle.
\end{enumerate}
\end{proof}

\begin{prop}\label{thm of CSS in SFTC}
The subfactor category $^{\bot}\I[1]/\I$ has a cluster structure.
\end{prop}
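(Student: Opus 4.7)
The plan is to verify, for every cluster tilting subcategory $\T$ of $\C'={^{\bot}\I[1]/\I}$, that the quiver $Q(\T)$ has neither loops nor $2$-cycles. By Theorem 4.9 of \cite{IY08}, the assignment $\T\mapsto \T\oplus\I$ is a bijection between cluster tilting subcategories of $\C'$ and cluster tilting subcategories of $\C$ containing $\I$. Since $\C$ has a cluster structure, the quiver $Q(\T\oplus\I)$ has no loops nor $2$-cycles, so the task reduces to transferring this absence from $Q(\T\oplus\I)$ to $Q(\T)$.

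The main tool is Lemma \ref{lem in 3.2}. For each indecomposable $T\in\T$, part (1) of that lemma says that the right and left exchange triangles of $T$ in $\T\oplus\I$ have middle terms of the form $E\oplus I$ and $E'\oplus I'$ with $I,I'\in\I$, and that in the subfactor category $\C'$ these induce precisely the right and left exchange triangles of $T$ in $\T$ whose middle terms are exactly $E$ and $E'$. In other words, the middle term of an exchange triangle in $\T$ is obtained from the corresponding middle term in $\T\oplus\I$ by dropping the $\I$-summand.

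The next step is to identify arrows of $Q(\T)$ with multiplicities in these middle terms. Using the standard fact that, for a cluster tilting subcategory, the number of arrows from $T_j$ to $T_i$ equals the multiplicity of $T_j$ as a direct summand of the middle term of the right exchange triangle at $T_i$ (equivalently, the multiplicity of $T_i$ in the middle term of the left exchange triangle at $T_j$), Lemma \ref{lem in 3.2} then yields, for any pair of indecomposable objects $T,T'\in\T$, the equality
\[
\#\{\text{arrows }T'\to T\text{ in }Q(\T)\}=\#\{\text{arrows }T'\to T\text{ in }Q(\T\oplus\I)\}.
\]

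With this identification in hand, the conclusion is immediate. A loop at some $T\in\T$ in $Q(\T)$ would produce a loop at $T$ in $Q(\T\oplus\I)$, and a $2$-cycle $T\rightleftarrows T'$ in $Q(\T)$ would produce a $2$-cycle between the same vertices in $Q(\T\oplus\I)$; both contradict the cluster-structure hypothesis on $\C$. The only subtlety I expect to encounter, and thus the part that needs the most care, is the compatibility of the arrow-counting convention with both triangulated structures, since one must check that the minimal approximations in $\T$ and in $\T\oplus\I$ match once the $\I$-summand is removed; this is precisely what Lemma \ref{lem in 3.2} is designed to ensure.
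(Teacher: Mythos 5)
Your proposal is correct and follows essentially the same route as the paper: both use Lemma \ref{lem in 3.2} to see that the exchange triangles of $T$ in $\T$ are obtained from those of $T$ in $\T\oplus\I$ by dropping the $\I$-summands of the middle terms, so that $Q(\T)$ is the full subquiver of $Q(\T\oplus\I)$ on the non-$\I$ vertices, whence loops and $2$-cycles in $Q(\T)$ would lift to $Q(\T\oplus\I)$ and contradict the cluster structure on $\C$. Your explicit attention to the identification of arrow counts with multiplicities in the middle terms of the exchange triangles is a point the paper leaves implicit, but it is the same argument.
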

\begin{proof}
Let $\T$ be a cluster tilting subcategory of $\C'$,
then it follows from the above lemma that the quiver $Q(\T)$ is a full subquiver of $Q(\T\oplus\I)$ by deleting vertices given by isomorphism classes of indecomposable objects in $\I$. Thus $Q(\T)$ has no loops nor $2$-cycles since $\C$ has a cluster structure. Therefore $\C'$ has a cluster structure.
\end{proof}

\subsection{Cluster substructures in cotorsion pairs}\label{Sec TriCot}

In \cite{ZZ12}, the authors studied the cotorsion pairs in a $2$-Calabi-Yau triangulated category with cluster tilting objects and give a classification of cotorsion pairs in the category. For a 2-Calabi-Yau triangulated category $\C$ with a cluster tilting subcategory, we have the following analogy of Proposition 5.3 in \cite{ZZ12}. Before state the proposition, we recall a result in \cite{BIRS09}, which is a key point in the following study of cluster substructures in cotorsion pairs.

\begin{lem}(Proposition II.2.3 \cite{BIRS09})\label{lem of decop of SFTC}
Let  $(\X,\Y)$  be a cotorsion pair of $\C$. Then the core $\I=\X\cap \Y$ is a functorially finite rigid subcategory and there is a decomposition of triangulated category $^{\bot}\I[1]/\I=\X/\I\oplus \Y/\I$.
\end{lem}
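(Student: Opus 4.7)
The plan is to prove the two parts of the lemma in sequence: first the properties of the core $\I$, then the decomposition of the subfactor. For rigidity of $\I$, this is immediate since $\I \subseteq \X$ and $\I \subseteq \Y$, so $\Ext^1_\C(\I,\I) \subseteq \Ext^1_\C(\X,\Y) = 0$. For functorial finiteness of $\I$, given any $C \in \C$, I would start with the cotorsion triangle $X \to C \to Y[1] \to X[1]$ with $X \in \X, Y \in \Y$, then take a right $\Y$-approximation $I \to X$. The approximating object $I$ lies in $\Y$ and, using that $\Y$ is closed under extensions (which follows from $\Y = \X^{\perp_1}$ and the standard long exact sequence argument), ends up in $\X \cap \Y = \I$. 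Composition yields a right $\I$-approximation of $C$. The left approximation case is dual.

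For the decomposition $^{\bot}\I[1]/\I = \X/\I \oplus \Y/\I$, I would verify three things. First, both $\X$ and $\Y$ lie in $^{\bot}\I[1]$: for $X \in \X$, we have $\Ext^1_\C(X,\I) = 0$ since $\I \subseteq \Y$; for $Y \in \Y$, the $2$-Calabi-Yau duality gives $\Ext^1_\C(Y,\I) \cong D\Ext^1_\C(\I,Y) = 0$ since $\I \subseteq \X$. Second, for orthogonality in the quotient, given $f \colon X \to Y$ with $X \in \X, Y \in \Y$, I apply $\Hom_\C(X,-)$ to the cotorsion triangle $X_0 \to Y \to Y_0[1] \to X_0[1]$ of $Y$; since $\Ext^1_\C(X,Y_0) = 0$, the morphism $f$ factors through $X_0$. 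But the defining triangle exhibits $X_0$ as an extension of $Y$ by $Y_0$, both in $\Y$, so $X_0 \in \X \cap \Y = \I$, and $f$ vanishes in the quotient. The opposite direction $\Hom_{\C'}(\Y/\I,\X/\I) = 0$ follows symmetrically by using $(\Y,\X)$ as a cotorsion pair.

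Third, to show every $M \in {}^\perp\I[1]$ decomposes as a direct sum of objects of $\X/\I$ and $\Y/\I$ in the subfactor, I would take the cotorsion triangle $X \to M \to Y[1] \to X[1]$ and observe that its connecting morphism lives in $\Ext^1_\C(Y,X) \cong D\Ext^1_\C(X,Y) = 0$ by $2$-Calabi-Yau duality and the cotorsion pair condition; hence the triangle splits and $M \cong X \oplus Y[1]$ in $\C$. The main obstacle will be the final identification: the splitting naturally yields a component $Y[1]$ rather than an object of $\Y$, so I would need to exploit the description of the shift functor $\langle 1 \rangle$ on the subfactor $^{\bot}\I[1]/\I$ from Iyama--Yoshino (Theorem 4.2 of \cite{IY08}), which is built from $\I$-approximations, to rewrite the summand $Y[1]$ as an object lying in $\Y$ modulo $\I$. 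Alternatively, one can run the symmetric argument with the dual decomposition $\C = \Y * \X[1]$ to produce a triangle $Y' \to M \to X'[1] \to Y'[1]$ whose connecting morphism also vanishes, and then piece both splittings together to extract genuine $\X$- and $\Y$-components of $M$ in the quotient.
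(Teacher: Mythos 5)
This lemma is not proved in the paper at all: it is imported verbatim as Proposition II.2.3 of \cite{BIRS09}, so there is no in-paper argument to compare against and your proposal has to stand on its own. The easy parts of your sketch are fine: rigidity of $\I$ is indeed immediate, and your factorization argument for $\Hom_{{}^{\bot}\I[1]/\I}(\X/\I,\Y/\I)=0$ (factor $f\colon X\to Y$ through the first term $X_0$ of the cotorsion triangle of $Y$ and observe $X_0\in\I$ because it is an extension of objects of $\Y$) is correct and is essentially the standard one. In the functorial-finiteness step there is already a small gap: a right $\Y$-approximation $I\to X$ of $X\in\X$ has $I\in\Y$, but nothing you say puts $I$ into $\X$; you need either the triangle $X_1\to I\to X\to X_1[1]$ coming from the \emph{other} cotorsion pair $(\Y,\X)$ together with extension-closedness of $\X$ (not of $\Y$), or a minimal approximation plus Wakamatsu's lemma to control the cocone. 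This is repairable.

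The real problem is the decomposition of objects. In the triangle $X\to M\to Y[1]\xrightarrow{w} X[1]$ the connecting morphism $w$ lies in $\Hom_{\C}(Y[1],X[1])\cong\Hom_{\C}(Y,X)$, \emph{not} in $\Ext^1_{\C}(Y,X)$, so the $2$-Calabi-Yau duality with $\Ext^1_{\C}(X,Y)=0$ says nothing about it and the triangle does not split in $\C$. (If it always split, every indecomposable $M\in{}^{\bot}\I[1]$ outside $\X$ would be a summand of $Y[1]$, i.e.\ $M[-1]\in\Y$; for a cotorsion pair that is not a $t$-structure this fails already for $M\in\Y\setminus\X$.) The membership $M\in{}^{\bot}\I[1]$ only kills maps into $\I[1]$, not the map $w$. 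Moreover, even granting a splitting you would obtain a summand in $\Y[1]$ rather than in $\Y$, and the two ways you propose to convert $\Y[1]$ into $\Y$ modulo $\I$ (via the Iyama--Yoshino shift $\langle1\rangle$, or by ``piecing together'' the two splittings from $\C=\X*\Y[1]$ and $\C=\Y*\X[1]$) are not carried out and are exactly where the substance of Proposition II.2.3 of \cite{BIRS09} lies. As it stands, the key assertion that every object of ${}^{\bot}\I[1]$ is, modulo $\I$, a direct sum of an object of $\X$ and an object of $\Y$ is not established; you should either work through the approximation-triangle argument of \cite{BIRS09} or simply cite the result, as the paper does.
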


\begin{prop}\label{Prop of x-cts}
Let  $(\X,\Y)$  be a cotorsion pair of $\C$ with core $\I=\X\cap \Y$. Assume that there is a cluster tilting subcategory $\T$ contains $\I$ as a subcategory (this is true if $\C$ has a cluster tilting object\cite{IY08,DK08,ZZ}). Then
\begin{enumerate}
\item Any cluster tilting subcategory $\T'$ containing $\I$ as a subcategory can be uniquely written as $\T'=\T'_{\X}\oplus \I \oplus \T'_{\Y} $, such that $\T'_{\X}\oplus \I$ is a $\X$-cluster tilting subcategory and  $\T'_{\Y} \oplus \I$ is a $\Y$-cluster tilting subcategory.
\item Any $\X$-cluster tilting subcategory is of the form $\T'_{\X}\oplus \I$, where $\T'=\T'_{\X}\oplus \I \oplus \T'_{\Y} $ is a cluster tilting subcategory of $\C$ and $\T'_{\Y}\oplus \I$ is a $\Y$-cluster tilting subcategory.
\item The correspondence $\T' \rightarrow \T'_\X \oplus \I \oplus \T'_\Y$ gives a bijection between the cluster tilting subcategories in $\C$ containing $\I$ as a subcategory and the pairs of the $\X$-cluster tilting subcategories and the $\Y$-cluster tilting subcategories.
\end{enumerate}
\end{prop}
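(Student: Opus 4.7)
The plan is to reduce the statement to the Iyama--Yoshino correspondence (Theorem 4.9 in \cite{IY08}) between cluster tilting subcategories of $\C$ containing the rigid subcategory $\I$ and cluster tilting subcategories of the subfactor $^{\bot}\I[1]/\I$, combined with the triangulated decomposition $^{\bot}\I[1]/\I = \X/\I \oplus \Y/\I$ provided by Lemma \ref{lem of decop of SFTC}. Since each summand of a direct sum of triangulated categories is itself $2$-Calabi--Yau and inherits a cluster structure from Proposition \ref{thm of CSS in SFTC}, it makes sense to speak of cluster tilting subcategories in $\X/\I$ and $\Y/\I$. The key technical link I would then establish is that, under the quotient functor, $\X$-cluster tilting subcategories of $\C$ containing $\I$ correspond bijectively to cluster tilting subcategories of the triangulated category $\X/\I$, and symmetrically for $\Y$.

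For (1), I would begin with a cluster tilting subcategory $\T'$ of $\C$ containing $\I$. Its image $\T'/\I$ is cluster tilting in $^{\bot}\I[1]/\I$ by the IY correspondence. Because $\Hom(\X/\I,\Y/\I)=0$ in both directions, every indecomposable of $\T'/\I$ lies in exactly one of the two summands; write $\T'/\I = \U_{\X}\oplus \U_{\Y}$. A direct check, using that $\Ext^1_{^{\bot}\I[1]/\I}$ between the two summands vanishes, shows $\U_{\X}$ is cluster tilting in $\X/\I$ and $\U_{\Y}$ is cluster tilting in $\Y/\I$. Pulling back, set $\T'_{\X}$ and $\T'_{\Y}$ to be the preimages (deprived of $\I$-summands) of $\U_{\X}$ and $\U_{\Y}$ inside $\X$ and $\Y$ respectively; then $\T' = \T'_{\X}\oplus \I \oplus \T'_{\Y}$ and uniqueness follows from the uniqueness of the orthogonal decomposition in $^{\bot}\I[1]/\I$. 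Translating the cluster tilting property of $\U_{\X}$ back along the equality $\Ext^1_{^{\bot}\I[1]/\I}(X,Y)=\Ext^1_{\C}(X,Y)$ (for $X,Y\in{^{\bot}\I[1]}$, a consequence of rigidity of $\I$) yields that $\T'_{\X}\oplus \I$ is $\X$-cluster tilting, and analogously for $\T'_{\Y}\oplus \I$.

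For (2), given an $\X$-cluster tilting subcategory $\D\subseteq \X$, I would first note that $\I\subseteq \D$: for any $I\in\I\subseteq\Y$ and any $X\in\X$, the cotorsion condition $\Ext^1_{\C}(\X,\Y)=0$ gives $\Ext^1_{\C}(X,I)=0$, so $I\in\D$ by the defining property. Writing $\D = \D'\oplus \I$ and passing to $\X/\I$, the same Ext-identification shows that $\D'$ is cluster tilting in $\X/\I$. Pairing this with any cluster tilting subcategory $\V$ of $\Y/\I$ (which exists because the given $\T$ projects to one) produces a cluster tilting subcategory $\D'\oplus \V$ of $^{\bot}\I[1]/\I$, and the IY correspondence lifts it to a cluster tilting subcategory $\T'=\D\oplus\T'_{\Y}$ of $\C$ containing $\I$, whose $\X$-part is $\D$. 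Assertion (3) is then the formal statement that the constructions in (1) and (2) are mutually inverse. The main obstacle will be (2), namely establishing the precise equivalence ``$\X$-cluster tilting in $\C$ containing $\I$'' $\Longleftrightarrow$ ``cluster tilting in $\X/\I$'', because one must carefully transport the functorial finiteness hypothesis and the $\Ext^1$-vanishing characterization across the quotient, using both the rigidity of $\I$ and the fact that $\X/\I$ is a genuine triangulated summand; once this equivalence is in hand, the remaining bookkeeping is formal.
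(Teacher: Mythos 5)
Your proposal is correct and takes essentially the same route as the paper: both arguments pass through the decomposition $^{\bot}\I[1]/\I=\X/\I\oplus\Y/\I$, identify $\X$-cluster tilting subcategories containing $\I$ with cluster tilting subcategories of $\X/\I$ via the $\Ext^1$-identification (Lemma 4.8 of \cite{IY08}) and the transport of functorial finiteness, and lift back using the Iyama--Yoshino correspondence (Theorem 4.9 of \cite{IY08}), with (3) following formally. The only cosmetic difference is that the paper pairs a given $\X$-cluster tilting subcategory with the fixed $\T_\Y$ coming from $\T$, whereas you allow an arbitrary cluster tilting subcategory of $\Y/\I$; this changes nothing.
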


\begin{proof}
Because $\I\subseteq \T$ and $\T$ is rigid, we have $\T\subseteq {^{\bot}\I[1]}$; moreover due to the decomposition $^{\bot}\I[1]/\I=\X/\I\oplus \Y/\I$, $\T$ can be decomposed as $\T_\X\oplus \T_\Y$ in the quotient category $^{\bot}\I[1]/\I$, where $\T_{\X}\subseteq \X$ and $\T_{\Y}\subseteq \Y$. Thus in $\C$, $\T=\T_{\X}\oplus \I \oplus \T_{\Y}$. By using the decomposition $^{\bot}\I[1]/\I=\X/\I\oplus \Y/\I$, the proof of (1) is similar to the proof of Proposition 5.3(1) in \cite{ZZ12}. For the statement (2), note that the existence of the cluster tilting subcategory $\T$ guarantees that any $\X$-cluster tilting subcategory can be extended as a cluster tilting subcategory in $\C$. In fact, it is clear that any $\X$-cluster tilting subcategory contains $\I$ as a subcategory, and let $\T'_{\X}\oplus \I$ be such a $\X$-cluster tilting subcategory. Then we claim that $\T'_{\X}$ is a cluster tilting subcategory in $\X/\I$. Let $X$ be an object in $\X/\I$, because $\T'_\X\oplus \I$ is contravariantly finite in $\X$, there exist an object $T\in \T'_\X\oplus \I$ and a morphism $f\in \Hom_\C(T,X)$ such that $f\cdot :\Hom_{\C}(-,T)\rightarrow \Hom_{\C}(-,X)$ is surjective on $\T'_\X\oplus \I$. Then it is easy to check that $\underline{f}\cdot :\Hom_{\X/\I}(-,T)\rightarrow \Hom_{\X/\I}(-,X)$ is surjective on $\T'_\X$. Therefore $\T'_\X$ is contravariantly finite in $\X/\I$. Similarly, $\T'_\X$ is convariantly finite and thus functorially finite in $\X/\I$.
Let $X$ be an object in $\X/\I$ such that ${\Ext^1}_{\X/\I}(T,X)=0$ for any $T \in \T'_\X$. We now prove that $X\in \T'_\X$. In fact, from ${\Ext^1}_{{^{\bot}\I[1]}/\I}(T,X)={\Ext^1}_{\X/\I}(T,X)=0$, we have ${\Ext^1}_{\C}(T,X)=0$ due to Lemma 4.8 in \cite{IY08}. Then we have ${\Ext^1}_{\C}(T,X)=0$ for any $T \in \T'_\X\oplus \I$ since ${\Ext^1}_{\C}(I,X)=0$ for any $I\in \I$. Then $X \in \T'_\X\oplus \I$ since $\T'_\X\oplus \I$ is a $\X$-cluster tilting subcategory in $\X$. Therefore $X$ belongs to $\T'_\X$. We have proved the claim. Note that $\T_\Y\oplus \I$ is a $\Y$-cluster tilting subcategory in $\Y$ from the statement (1), then by a similar argument, $\T_\Y$ is a cluster tilting subcategory in $\Y/\I$. Then from Lemma \ref{lem of decop of SFTC}, it is clear that $\T'_{\X}\oplus \T_{\Y}$ is a cluster tilting subcategory in $^{\bot}\I[1]/\I$. Therefore $\T'_{\X}\oplus \I \oplus \T_{\Y}$ is a cluster tilting subcategory in $\C$ (Theorem 4.9 in \cite{IY08}). Finally the statement (3) follows from (1) and (2).
\end{proof}

\begin{defn}
For a $\X$-cluster tilting subcategory $\T'_{\X}\oplus \I$ in $\X$, we define $Q(\T'_{\X}\oplus \I)$ as an ice quiver with the exchangeable vertices given by the isomorphism classes of indecomposable objects in $\T'_\X$ and the frozen vertices given by the isomorphism classes of indecomposable objects in $\I$. For two vertices  $T_i$ and $T_j$ (not both the frozen vertices), the number of arrows from $T_i$ to $T_j$ is given by the dimension of irreducible morphism space $irr(T_i,T_j)$ in $\T'_{\X}\oplus \I$.
\end{defn}

Now we state the main result in this subsection.

\begin{thm}\label{Them in TriC}
Let $\C$ be a 2-Calabi-Yau triangulated category with a cluster tilting subcategory and $(\X,\Y)$  be a cotorsion pair of $\C$ with core $\I=\X\cap \Y$. Assume that there is a cluster tilting subcategory in $\C$ contains $\I$ as a subcategory. If the cluster tilting subcategories in $\C$ forms a cluster structure, then the $\X$-cluster tilting subcategories form a cluster structure of $\X$ with coefficient subcategory $\I$: it is a cluster substructure of $\C$ in the sense of section II.2\cite{BIRS09}, more precisely, the following conditions are satisfied:
 \begin{enumerate}
\item For each $\X$-cluster tilting subcategory  $\T_{\X}\oplus \I$ in $\X$ and an indecomposable object $T_0$ in $\T_{\X}$, there is  a unique (up to isomorphism) indecomposable object $T'_0\ncong T_0$ in $\X$ such that $\T'_\X\oplus \I:= add(ind\T_\X \setminus \{T_0\} \cup \{T'_0\})\oplus \I$ is a $\X$-cluster tilting subcategory in $\X$.
\item In the situation of (1), there are triangles
$${T_0}\s{f}\longrightarrow E\oplus I\s{g}\longrightarrow {T'_0}\s{}\longrightarrow {T_0[1]}~~~~~~~~~~~~ and$$
$${T'_0}\s{s}\longrightarrow E'\oplus I'\s{t}\longrightarrow {T_0}\s{}\longrightarrow {T'_0[1]}~~~~~~~~~~~~~~~~~~$$
in $\X$, where $g$ and $t$ are minimal right $(\T_\X \cap \T'_\X)\oplus \I $-approximation and $f$ and $s$ are minimal left $(\T_\X \cap \T'_\X)\oplus \I $-approximation. The subcategory $\mu_{T_0}(\T_{\X}\oplus \I):=\T'_\X\oplus \I$ is called the mutation of $\T_{\X}\oplus \I$ at $T_0$. These two triangles are called the right exchange triangle and the left exchange triangle of $T_0$ in $\T_\X\oplus\I$ respectively.
\item For each $\X$-cluster tilting subcategory  $\T_{\X}\oplus \I$ in $\X$, there are no loops nor $2$-cycles in the ice quiver $Q({\T_\X}\oplus \I)$.
\item In the situation of (1), passing from $Q({\T_\X}\oplus \I)$ to $Q({\T'_\X}\oplus \I)$ is given by the Fomin-Zelevinsky mutation at the vertex of $Q({\T_\X}\oplus \I)$ corresponding to $T_0$.
\item There is a subcategory $\B$ of $\C$ such that  $\mu_{T_n}\circ \circ \circ \mu_{T_0}(\T_{\X}\oplus \I)\oplus \B$ is a cluster tilting subcategory in $\C$ for any finite
sequence of mutations $\mu_{T_n}\circ \circ \circ \mu_{T_0}(\T_{\X}\oplus \I)$ of $\T_{\X}\oplus \I$.
\end{enumerate}
\end{thm}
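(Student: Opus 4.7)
The plan is to reduce the five conditions to statements about the ambient cluster tilting subcategory $\T := \T_{\X} \oplus \I \oplus \T_{\Y}'$ of $\C$, where $\T_{\Y}' \oplus \I$ is an arbitrary $\Y$-cluster tilting subcategory of $\Y$ (which exists by the hypothesis and the bijection in Proposition \ref{Prop of x-cts}), and to the corresponding cluster tilting subcategory $\T_{\X} \oplus \T_{\Y}'$ of the subfactor $\C' = {}^{\perp}\I[1]/\I$ (Lemma \ref{lem of decop of SFTC}), which decomposes orthogonally as $\C' = \X/\I \oplus \Y/\I$.

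For conditions (1) and (2): since $\X/\I$ is an orthogonal direct summand of the $2$-Calabi-Yau triangulated category $\C'$, which has a cluster structure by Proposition \ref{thm of CSS in SFTC}, the standard mutation theorem in $\X/\I$ (applied to the cluster tilting subcategory $\T_{\X}$) produces a unique partner $T_0' \ncong T_0$ in $\X/\I$ such that $\mu_{T_0}(\T_{\X})$ is cluster tilting in $\X/\I$, together with exchange triangles in $\X/\I$. By Lemma \ref{lem in 3.2}(1), applied to $\T_{\X} \oplus \T_{\Y}'$ in $\C'$, these triangles lift to exchange triangles of the shape asserted in (2), in $\C$. The orthogonality of $\X/\I$ and $\Y/\I$ in $\C'$ forces the middle terms of the lifted triangles to contain no summands from $\T_{\Y}'$, so they lie in $(\T_{\X}\setminus \{T_0\})\oplus \I \subseteq \X$, and $T_0' \in \X$ as well. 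The minimality of the approximations by $(\T_{\X}\cap \T_{\X}')\oplus \I$ is then inherited by restricting the ambient approximations in $\T$.

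For condition (3): comparing the right (respectively left) exchange triangles of $T_0$ in the $\X$-cluster tilting $\T_{\X}\oplus \I$ and in the ambient cluster tilting $\T$ of $\C$, both triangles share the same endpoints $T_0$ and $T_0'$ (established in (1)), so have isomorphic middle terms. Consequently, for any indecomposable $M \in \T_{\X}\oplus \I$, the multiplicity of $M$ in the middle of each exchange triangle — and hence the number of arrows $T_0 \to M$ or $M \to T_0$ in the ice quiver $Q(\T_{\X}\oplus \I)$ — coincides with the corresponding number of arrows in $Q(\T)$. Since $\C$ has a cluster structure, $Q(\T)$ has no loops nor $2$-cycles, and this transfers directly to all relevant arrows of $Q(\T_{\X}\oplus \I)$, including those joining $\T_{\X}$-vertices to $\I$-vertices.

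Condition (4) is the classical Fomin-Zelevinsky compatibility: since the exchange triangles in $\T_{\X}\oplus \I$ coincide, up to the identification above, with those in $\T$ at $T_0$, the mutation of the ice quiver $Q(\T_{\X}\oplus \I)$ at the vertex $T_0$ transforms as a full subquiver of $Q(\T)$, which itself follows the Fomin-Zelevinsky rule (standard for cluster tilting in $\C$). For condition (5), one takes $\B := \T_{\Y}'$: by Lemma \ref{lem in 3.2}(2), mutation of $\T_{\X}\oplus \I$ at any $T_0 \in \T_{\X}$ matches the mutation of $\T$ at $T_0$ in $\C$, and the $\T_{\Y}'$-component remains untouched because mutations in the $\X/\I$-summand of $\C'$ do not affect the $\Y/\I$-summand. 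Iterating shows $\mu_{T_n}\circ\cdots\circ\mu_{T_0}(\T_{\X}\oplus \I)\oplus \B$ is cluster tilting in $\C$ for every admissible sequence. The main obstacle will be the orthogonality-based matching of middle terms of the lifted exchange triangles in Lemma \ref{lem in 3.2}, since all four conditions ultimately rest on identifying the ice quiver of $\T_{\X}\oplus \I$ with the restriction of $Q(\T)$ to vertices in $\T_{\X}\cup \I$.
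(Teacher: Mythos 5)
Your proof is correct and follows essentially the same route as the paper: both rest on Proposition \ref{Prop of x-cts}, the orthogonal decomposition $^{\bot}\I[1]/\I=\X/\I\oplus \Y/\I$, and Lemma \ref{lem in 3.2} to identify the exchange triangles of $T_0$ in $\T_\X\oplus\I$ with those of $T_0$ in the ambient cluster tilting subcategory $\T$, after which the quiver conditions transfer from $Q(\T)$ and one takes $\B=\T_\Y$. The only cosmetic difference is that you build the exchange triangles by mutating inside $\X/\I$ and lifting via Lemma \ref{lem in 3.2}(1), whereas the paper starts from the exchange triangles of $T_0$ in $\T$ inside $\C$ and descends via Lemma \ref{lem in 3.2}(2); these are the two directions of the same correspondence.
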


\begin{proof}
\begin{enumerate}
\item From Proposition\ref{Prop of x-cts}, we can assume that each $\X$-cluster tilting subcategory is of the form $\T_{\X}\oplus \I$, where $\T=\T_{\X}\oplus \I \oplus \T_{\Y} $ is a cluster tilting subcategory in $\C$ with $\T_{\Y} \in \Y$.
Let
\begin{equation}\label{Ex1}
~~~~~~~~~~~~{T_0}\s{f}\longrightarrow E\oplus I\s{g}\longrightarrow {T'_0}\s{}\longrightarrow {T_0[1]}~~~~~~~and
\end{equation}
\begin{equation}\label{Ex2}
{T'_0}\s{s}\longrightarrow E'\oplus I'\s{t}\longrightarrow {T_0}\s{}\longrightarrow {T'_0[1]}
\end{equation}
be the exchange triangles of $\T$ in $\C$ with $I\in \I$ and $E$ and $E'$ have no direct summands in $\I$.
Then from Lemma \ref{lem in 3.2}, they induce exchange triangles
\begin{equation}\label{Ex3}
~~~~~~~~~~~~{T_0}\s{\underline{f}}\longrightarrow E\s{\underline{g}}\longrightarrow {T'_0}\s{}\longrightarrow {T_0 \langle 1 \rangle}~~~~~~~and
\end{equation}
\begin{equation}\label{Ex4}
{T'_0}\s{\underline{s}}\longrightarrow E'\s{\underline{t}}\longrightarrow {T_0}\s{}\longrightarrow {T'_0 \langle 1 \rangle}
\end{equation}
in the subfactor category $^{\bot}\I[1]/\I$ respectively. Moreover we have $E \in {\T_\X}\oplus \I$ in the triangle (\ref{Ex1}), this is because $^{\bot}\I[1]/\I=\X/\I\oplus \Y/\I$ and thus any morphism from $T_0$ to $\T_\Y$ factor through $\I$. Similarly, $E' \in {\T_\X}\oplus \I$ in the triangle (\ref{Ex2}). Since $\X/\I$ is a triangulated subcategory of $^{\bot}\I[1]/\I$, the triangles $(\ref{Ex3})$ and $(\ref{Ex4})$ are both in $\X/\I$ and thus $ T'_0$ belongs to $\X$.
Because $\T'_\X\oplus \I \oplus \T_{\Y}$ is a cluster tilting subcategory in $\C$, $\T'_\X\oplus \I$ is a $\X$-cluster tilting subcategory due to Proposition\ref{Prop of x-cts}. The uniqueness of $\T'_\X\oplus \I$ comes from the uniqueness of $\T'_\X\oplus \I \oplus \T_{\Y}$. We have proved the statement.
\item It is only need to show that in the triangles $(\ref{Ex1})$ and $(\ref{Ex2})$, $g$ and $t$ are minimal right $(\T_\X \cap \T'_\X)\oplus \I $-approximation and $f$ and $s$ are minimal left $(\T_\X \cap \T'_\X)\oplus \I $-approximation. This easily follows from the fact that $(\ref{Ex1})$ and $(\ref{Ex2})$ are the exchange triangles.
\item  We claim that the arrows between any two vertices (not both frozen) in the ice quiver $Q(\T_\X\oplus \I)$ are coincide with the arrows between these vertices in the quiver $Q(\T)$.
In fact, given vertices $T_0$ and $T_1$ in the ice quiver $Q(\T_\X\oplus \I)$ with $T_0$ exchangeable, by unifying the vertices in the quiver and the isomorphism classes of indecomposable objects in the subcategory, the proof of the statement (2) shows that $\T_\X\oplus\I$ and $\T$ have the same exchange triangles at $T_0$, thus the number of arrows from $T_0$ to $T_1$ and the number of arrows from $T_1$ to $T_0$ in the quivers $Q(\T_\X\oplus \I)$ and $Q(\T)$ are all determined by the degree of $T_1$ in $E\oplus I$ and $E'\oplus I'$ respectively.
Thus the conclusion follows from the assumption that $\C$ has a cluster structure.
\item  We can prove this similar to the proof of Theorem II.1.6 in \cite{BIRS09}.
\item Let $\B=\T_\Y$ be the subcategory of $\Y$ in the proof of statement (1). Note that any finite mutation $\mu_{T_n}\circ \circ \circ \mu_{T_0}(\T_{\X}\oplus \I)$ is a cluster tilting subcategory in $\X/\I$, then from the proof of Proposition\ref{Prop of x-cts}, $\mu_{T_n}\circ \circ \circ \mu_{T_0}(\T_{\X}\oplus \I)\oplus \T_\Y$ is a cluster tilting subcategory in $\C$.
\end{enumerate}
\end{proof}


\subsection{Cluster structures and rooted cluster algebras}\label{Sec TriCC}

In this subsection, we fix the following settings. We always assume that $\C$ is a $2$-Calabi-Yau triangulated category with a cluster structure given by its cluster tilting subcategories. Let $(\X,\Y)$ be a cotorsion pair of $\C$ with core $\I=\X\cap \Y$, where we assume that $\I$ is a subcategory of a cluster tilting subcategory $\T$ in $\C$. Then Proposition \ref{Prop of x-cts} guarantees that we can write $\T=\T_{\X}\oplus \I \oplus \T_{\Y} $ with $\T_{\X}\oplus \I$ being $\X$-cluster tilting and $\T_{\Y}\oplus \I$ being $\Y$-cluster tilting. We collect the following notations which we use in this subsection.
\begin{itemize}
\item $Q(\T):$ the quiver of the cluster tilting subcategory $\T$.
\item $Q(\T_\X\oplus \I):$ the ice quiver of the $\X$-cluster tilting subcategory $\T_\X\oplus \I$.
\item $Q(\T_\Y\oplus \I):$ the ice quiver of the $\Y$-cluster tilting subcategory $\T_\Y\oplus \I$.
\item $Q(\T_{\I}):$ the ice quiver by freezing the vertices in $Q(\T)$ which are determined by the isomorphism classes of the indecomposable objects in $\I$.
\item $Q(\T\setminus \I):$ the quiver of the cluster tilting subcategory $\T\setminus \I$ in the subfactor triangulated category $^{\bot}\I[1]/\I$.
\end{itemize}
The rooted cluster algebras corresponding to the above quivers are denoted by $\A(\T)$, $\A(\T_{\X}\oplus\I), \A(\T_{\Y}\oplus\I), \A(\T_\I)$ and $\A(\T\setminus \I)$ respectively.
\begin{itemize}
\item $R(\T):$ the subcategory of $\C$ additive generated by rigid objects which are reachable from $\T$ by a finite number of mutations, where the mutation is defined in Subsection \ref{Sec TriP}.
\item $R(\T_\X\oplus \I):$ the subcategory of $\C$ additive generated by rigid objects which are reachable from $\T_\X\oplus \I$ by a finite number of mutations, where the mutation is defined in Theorem\ref{Them in TriC}(2).
\item $R(\T_\Y\oplus \I):$ the subcategory of $\C$ additive generated by rigid objects which are reachable from $\T_\Y\oplus \I$ by a finite number of mutations.
\item $R(\T_{\I}):$ the subcategory of $\C$ additive generated by rigid objects which are reachable from $\T$ by a finite number of mutations, where the mutations are not at the indecomposable objects in the subcategory $\I$.
\item $R(\T\setminus \I):$ the subcategory of $\C$ additive generated by rigid objects which are reachable from $\T\setminus \I$ by a finite number of mutations in the subfactor triangulated category $^{\bot}\I[1]/\I$, where we view these reachable rigid objects as objects in $\C$.
\end{itemize}
Then from Lemma \ref{lem in 3.2}, it is not hard to see that $indR(\T_{\I})=indR(\T\setminus \I)\sqcup ind\I$. We have a sequence $R(\T_\X\oplus \I)\subseteq R(\T_{\I})\subseteq R(\T)$ of inclusions, where the first one follows from Theorem \ref{Them in TriC}(2) and the second one is clear. Denote by $i_1: R(\T_\X\oplus \I)\rightarrow R(\T_{\I})$ and $i_2: R(\T_\I)\rightarrow R(\T)$ the natural embedding functors under the above inclusions. We define a canonical functor $p: R(\T_\I) \rightarrow R(\T\setminus \I)$, which is an identity on $R(\T\setminus \I)$ and maps an object in $\I$ to zero object.\\

The cluster map, which is called the cluster character in \cite{Palu08}, is defined in \cite{BIRS09}. One can use it to transform a cluster structure in $\C$ to a cluster algebra. We recall the following definition of cluster map from \cite{BIRS09} and \cite{FK10} in our settings. For more details, we refer to \cite{BIRS09} and \cite{FK10}.

\begin{defn}
Denote by $\Q(X_\T)$ the rational function field of $X_\T$, where $X_\T$ is the indeterminate set which is indexed by the isomorphism classes of indecomposable objects in $\T$. A map $\varphi$ from $\R(\T)$ to $\Q(X_\T)$ is called a cluster map if the following conditions are satisfied:
\begin{enumerate}
\item For the object $M \cong M'$, we have $\varphi(M)=\varphi(M')$.
\item For any indecomposable object $T_i$ in $\T$, we have $\varphi(T_i)=x_i$ where $x_i \in X_\T$ is the element indexed by $T_i$.
\item For any $M$ and $N$ in $\R(\T)$ with \textrm{dim}$\Ext^1_{\C}(M,N)=1$ (thus \textrm{dim}$\Ext^1_{\C}(N,M)=1$), we have $\varphi(M)\varphi(N)=\varphi(V)+\varphi(V')$ where $V$ and $V'$ are in the non-split triangles
$${M}\s{}\longrightarrow V\longrightarrow N\longrightarrow {M[1]}~~~~~~~~~~~~~~~~~~and$$
$${N}\s{}\longrightarrow V'\longrightarrow M\longrightarrow {N[1].}~~~~~~~~~~~~~~~~~~~~~~~$$
\item For any $M$ and $N$ in $\R(\T)$, we have $\varphi(M\oplus N)=\varphi(M)\varphi(N)$. In particular, $\varphi(0)=1$.
\end{enumerate}
\end{defn}

Then the cluster map $\varphi$ constructs a connection between a cluster structure (cluster substructure respectively) of $\C$ and the rooted cluster algebras (the rooted cluster subalgebras respectively).
More precisely, we have the following proposition, where the first statement can be proved similar to the proof of Proposition 2.3 in \cite{FK10} and the second one can be easily derived from the first one and Theorem\ref{Them in TriC}. The last one clearly follows from the first one.

\begin{prop}\label{prop of RCMap}
\begin{enumerate}
\item The map $\varphi$ induces a surjection from the set of isomorphism classes of indecomposable objects in $\R(\T)$ onto the set of cluster variables in $\A(\T)$, and also induces a surjection from the set of cluster tilting subcategories reachable from $\T$ onto the set of clusters of $\A(\T)$.
\item The map $\varphi$ induces a surjection $\varphi_1$ from the set of isomorphism classes of indecomposable objects in $\R(\T_\X\oplus\I)$ onto the set of cluster variables in $\A(\T_{\X}\oplus\I)$, and also induces a surjection from the set of $\X$-cluster tilting subcategories reachable from $\T_\X\oplus\I$ onto the set of clusters of $\A(\T_{\X}\oplus\I)$.
\item The map $\varphi$ induces a surjection $\varphi_2$ from the set of isomorphism classes of indecomposable objects in $\R(\T_\I)$ onto the set of cluster variables in $\A(\T_\I)$, and also induces a surjection from the set of cluster tilting subcategories reachable from $\T$ by finite number of mutations not at indecomposable objects in $\I$ onto the set of clusters of $\A(\T_\I)$.
\end{enumerate}
\end{prop}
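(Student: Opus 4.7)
The plan is to prove all three parts by induction on the length of mutation sequences, with the three statements sharing essentially the same structure. The first statement is the heart of the argument, and the other two follow by restricting the setting. I would work one statement at a time.

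For part (1), I would first check the base case directly: by axiom (2) of a cluster map, $\varphi$ sends the indecomposables of $\T$ bijectively onto the initial cluster $\{x_i\}$ of $\A(\T)$, hence the initial cluster tilting subcategory maps to the initial cluster. For the inductive step, suppose a cluster tilting subcategory $\T'$ reachable from $\T$ is sent by $\varphi$ to a cluster $C'$ of $\A(\T)$, with indecomposables going to the cluster variables of $C'$. Given an indecomposable $T_0 \in \T'$, take the two exchange triangles
\[
T_0 \to E \to T_0' \to T_0[1], \qquad T_0' \to E' \to T_0 \to T_0'[1]
\]
provided by the Iyama–Yoshino lemma. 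Axioms (3) and (4) of the cluster map then give $\varphi(T_0)\varphi(T_0') = \varphi(E) + \varphi(E')$, and multiplicativity of $\varphi$ expresses the right-hand side as the two monomials appearing in the Fomin–Zelevinsky exchange relation, since the cluster structure assumption forces $Q(\T')$ to have no loops nor $2$-cycles, so the multiplicities of indecomposable summands of $E$ and $E'$ are encoded faithfully by the arrows of $Q(\T')$. Consequently $\varphi(\mu_{T_0}(\T')) = \mu_{\varphi(T_0)}(C')$, which completes the induction. Surjectivity onto cluster variables and clusters is then immediate by reversing the reasoning: any cluster variable arises from a finite mutation sequence from the initial seed, and performing the same sequence of categorical mutations in $\C$ produces a preimage.

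For part (2), I would apply Theorem \ref{Them in TriC} to transport the argument verbatim. Part (1) of that theorem provides the categorical mutation on $\X$-cluster tilting subcategories, part (2) identifies the exchange triangles as the restrictions of those in $\C$ (so axiom (3) of the cluster map still applies to $\varphi$ restricted to $\R(\T_\X \oplus \I)$), and part (4) identifies the ice-quiver mutation with the Fomin–Zelevinsky mutation on $Q(\T_\X \oplus \I)$. Since indecomposables in $\I$ are frozen on both sides and $\varphi$ is multiplicative, the induction of part (1) carries over, with coefficients corresponding to the isomorphism classes in $\I$. For part (3), the ice quiver $Q(\T_\I)$ is just $Q(\T)$ with the vertices coming from $\I$ frozen, and by the definition of $\R(\T_\I)$ we consider only mutations avoiding the indecomposables in $\I$, which matches exactly the mutations of $\A(\T_\I)$ in non-frozen directions. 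Thus (3) is a direct specialization of (1).

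The main obstacle will be the compatibility between the categorical exchange triangles and the combinatorial exchange relations at each inductive step, namely the claim that the quiver $Q(\T')$ correctly records the multiplicities of indecomposable summands appearing in the middle terms $E, E'$ of the two exchange triangles. This is exactly what the cluster-structure hypothesis buys us (no loops, no $2$-cycles, and the arrow multiplicity equals the multiplicity in the exchange triangle), combined with axioms (3) and (4) of the cluster map. Once this compatibility is in place, the induction runs without difficulty and everything reduces to bookkeeping.
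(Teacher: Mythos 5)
Your proposal is correct and follows essentially the same route as the paper, which simply defers part (1) to the proof of Proposition 2.3 in \cite{FK10} (an induction on mutation sequences showing that the cluster map intertwines categorical mutation of cluster tilting subcategories with Fomin--Zelevinsky seed mutation, using the exchange triangles together with axioms (3) and (4) and the no-loops/no-$2$-cycles hypothesis) and obtains parts (2) and (3) from part (1) together with Theorem \ref{Them in TriC}. Your sketch just makes explicit the induction that the paper leaves to the citation.
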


From Theorem \ref{thm of CSS in SFTC}, the subfactor category $^{\bot}\I[1]/\I$ inherits a cluster structure. By viewing $R(\T\setminus \I)$ as a subcategory in $^{\bot}\I[1]/\I$, we denote by $\varphi'$ the cluster map from $R(\T\setminus \I)$ to $\A(\T\setminus \I)$.
Now we state our main result in this section.
\begin{thm}\label{thm of Last}
Under the above settings,
\begin{enumerate}
\item The rooted cluster algebras $\A(\T_{\X}\oplus\I)$ and $\A(\T_{\Y}\oplus\I)$ are both rooted cluster subalgebras of $\A(\T_\I)$ and thus rooted cluster subalgebras of $\A(\T)$. Moreover, $\A(\T_\I)$ is the glue of $\A(\T_{\X}\oplus\I)$ and $\A(\T_{\Y}\oplus\I)$ at $\varphi(\I)$.
\item Any rooted cluster subalgebra of $\A(\T)$ with coefficient set $\varphi(\I)$ such that $\I$ is functorially finite in $\C$ is of the form $\A(\T_{\X'}\oplus\I)$, where $\T_{\X'}\oplus \I$ is $\X'$-cluster tilting in a cotorsion pair $(\X',\Y')$ with core $\I$.
\item The correspondence $(\X',\Y')\mapsto (\A(\T_{\X'}\oplus\I),\A(\T_{\Y'}\oplus\I))$ gives the following bijection:
\begin{center}
\{cotorsion pairs in $\C$ with core $\I$\}\\
$\Updownarrow$\\
\{complete pairs of rooted cluster subalgebras of $\A(\T)$ with coefficient set $\varphi(\I)$ such that $\I$ is functorially finite in $\C$\}.
\end{center}
This bijection induces the following bijection:
\begin{center}
\{t-structures in $\C$\}\\
$\Updownarrow$\\
\{complete pairs of rooted cluster subalgebras of $\A(\T)$ without coefficients\}.
\end{center}
\item The specialization at $\varphi(\I)$ induces a rooted cluster surjection $\pi$ from $\A(\T_\I)$ to $\A(\T\setminus\I)$.
\item We have the following commutative diagram:
$$\xymatrix{&\R(\T\setminus\I)\ar[dd]^{\varphi'}&&&\\
&&\R(\T_\I)\ar[ul]_{p}\ar[dd]^{\varphi_2}\ar[rr]^{i_2}&&\R(\T)\ar[dd]^{\varphi}\\
\R(\T_\X\oplus\I)\ar[urr]^{i_1}\ar[dd]^{\varphi_1}&\A(\T\setminus\I)&&&\\
&&\A(\T_\I)\ar[ul]_{\pi}\ar[rr]^{j_2}&&\A(\T)\\
\A(\T_{\X}\oplus\I)\ar[urr]^{j_1}&&&&
}$$
where $j_1$ and $j_2$ are injections arising from subalgebras in the first statement.
\end{enumerate}
\end{thm}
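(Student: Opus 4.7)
The plan is to prove the five parts sequentially, drawing on Section \ref{Sec RCM} and Subsection \ref{Sec TriCot}. The main technical obstacle will be part (2), specifically the recovery of a cotorsion pair from the subseed data.

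For (1), the starting point is the observation that in the ice quiver $Q(\T_\I)$ there can be no arrow between an exchangeable vertex of $\T_\X$ and an exchangeable vertex of $\T_\Y$. Indeed, by Lemma \ref{lem of decop of SFTC} the subfactor category splits as ${}^{\bot}\I[1]/\I = \X/\I \oplus \Y/\I$, so any morphism between them factors through $\I$; consequently the middle term of the right exchange triangle of any $T \in \T_\X$ in $\T$ has no summand in $\T_\Y$, and by Lemma \ref{lem in 3.2}(1) the same holds in $\T_\X \oplus \I$. Hence every indecomposable component of $Q(\T_\I)$ sits entirely on the $\X$-side or entirely on the $\Y$-side, with shared vertices only among the frozen $\I$-vertices, so $Q(\T_\X \oplus \I)$ is precisely the gluing along $\I$ of the $\X$-side components (similarly for $\Y$). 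Theorem \ref{inj of indec comp}(1) then supplies the injective rooted cluster morphisms into $\A(\T_\I)$; Definition-Proposition \ref{inj of frozen} further embeds $\A(\T_\I)$ into $\A(\T)$, and Theorem \ref{inj of indec comp}(3) gives the gluing description of $\A(\T_\I)$.

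For (2) and (3), take any rooted cluster subalgebra $\A(\S')$ of $\A(\T)$ with coefficient set $\varphi(\I)$. Theorem \ref{Prop of inj} forces the injection to factor through a freezing followed by a subseed inclusion, and the coefficient condition forces the freezing to be exactly $\T_\I$, so $\S'$ is a gluing of certain indecomposable components of $Q(\T_\I)$ along $\I$. Let $\T_{\X'}$ be the chosen subset of $\mathrm{ind}(\T \setminus \I)$ and let $\T_{\Y'}$ be its complement. The crux is constructing a cotorsion pair $(\X',\Y')$ with core $\I$ such that $\T_{\X'} \oplus \I$ is $\X'$-cluster tilting. Using functorial finiteness of $\I$, I intend to set $\Y' := \{M \in \C \mid \Ext^1_\C(\T_{\X'} \oplus \I, M) = 0\}$ and define $\X'$ symmetrically, then verify both cotorsion axioms. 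The separation established in (1) gives $\Ext^1(\T_{\X'}, \T_{\Y'}) = 0$, and the decomposition $\T = \T_{\X'} \oplus \I \oplus \T_{\Y'}$ together with the cluster structure provides the exchange triangles needed to produce, for each $M \in \C$, a triangle $X \to M \to Y[1] \to X[1]$ with $X \in \X'$ and $Y \in \Y'$. Once $(\X',\Y')$ is constructed, Proposition \ref{Prop of x-cts} identifies $\T_{\X'} \oplus \I$ as $\X'$-cluster tilting, and applying (1) to this new cotorsion pair yields $\A(\S') \cong \A(\T_{\X'} \oplus \I)$. The bijection in (3) then follows by combining (1), (2), and Proposition \ref{Prop of x-cts}(3); the three conditions of Definition \ref{def of complete pairs} are verified against the decomposition of $Q(\T_\I)$ from (1). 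The t-structure sub-bijection reduces to the characterization that, in this setting, a cotorsion pair is a t-structure precisely when its core vanishes.

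For (4), Proposition \ref{prop of spec for frozen} applied to $\T_\I$ with specialisation of $\varphi(\I)$ to $1$ yields the surjection $\pi$; one checks that the resulting seed coincides with that of the cluster tilting subcategory $\T \setminus \I$ in ${}^{\bot}\I[1]/\I$, using the quiver identification from (1). Part (5) is then a diagram chase: each of $\varphi, \varphi_1, \varphi_2, \varphi'$ is determined on initial data by the same extension-multiplication rule, and each of $i_1, i_2, p, j_1, j_2, \pi$ is defined so as to match initial seeds, so the commutativity of each triangular and square subdiagram holds on generators and propagates by induction on mutation length via the rooted cluster morphism axiom \MM 3.
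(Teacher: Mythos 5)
Your treatment of parts (1), (3), (4) and (5) follows the paper's argument essentially verbatim: the splitting ${}^{\bot}\I[1]/\I=\X/\I\oplus\Y/\I$ kills arrows between $\T_\X$- and $\T_\Y$-vertices in $Q(\T_\I)$, Theorem \ref{Prop of inj} and Definition-Proposition \ref{inj of frozen} give the injections, Proposition \ref{prop of spec for frozen} gives $\pi$, and the diagram commutes because everything is induced by $\varphi$. The genuine problem is in part (2), in your construction of the cotorsion pair. Setting $\Y' := \{M \in \C \mid \Ext^1_\C(\T_{\X'} \oplus \I, M) = 0\}$ (and $\X'$ symmetrically) does not produce a cotorsion pair with core $\I$. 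Since $\T_{\X'}\oplus\I$ is rigid, $\T_{\X'}$ itself lies in this $\Y'$; more precisely, writing $\pi\colon {}^{\bot}\I[1]\rightarrow {}^{\bot}\I[1]/\I$ for the projection and using that $\T_{\X'}$ is cluster tilting in the triangulated summand $\C_1=\T_{\X'}\ast\T_{\X'}\langle1\rangle$, one computes that your $\Y'$ equals $\pi^{-1}(\add\T_{\X'}\oplus\C_2)$ rather than the desired $\pi^{-1}(\C_2)$. Consequently $\X'\cap\Y'=\T\neq\I$, and the first cotorsion axiom already fails: a general $X\in\X'$ has $\Ext^1_\C(X,\T_{\X'})\neq 0$ even though $\T_{\X'}\subseteq\Y'$. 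The $\Ext$-perpendicular of a cluster tilting subcategory is not a torsionfree class.

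The correct route, which is the paper's, is as follows: the no-arrow condition coming from Theorem \ref{Prop of inj} gives $\Hom_{{}^{\bot}\I[1]/\I}(\T',\T'')=\Hom_{{}^{\bot}\I[1]/\I}(\T'',\T')=0$ for the cluster tilting subcategory $\T'\oplus\T''$ of the subfactor category; this forces a decomposition ${}^{\bot}\I[1]/\I=\T'\ast\T'\langle1\rangle\oplus\T''\ast\T''\langle1\rangle=\C_1\oplus\C_2$ into triangulated summands (Proposition 3.5 of \cite{ZZ12}), and one then pulls back the cotorsion pair $(\C_1,\C_2)$ with core $\{0\}$ to the cotorsion pair $(\pi^{-1}(\C_1),\pi^{-1}(\C_2))$ in $\C$ with core $\I$ (Theorem 3.5 of \cite{ZZ11}). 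This also disposes of the second cotorsion axiom $\C=\X'\ast\Y'[1]$, which your sketch only gestures at via unspecified exchange triangles. You should replace your perpendicular construction with this decomposition-and-pullback argument; the rest of your proposal then goes through.
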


\begin{proof}
\begin{enumerate}
\item On the one hand, from Theorem \ref{Them in TriC}, the quiver $\Q(\T_{\X}\oplus \I)$ is a full subquiver of $Q(\T_\I)$. On the other hand, from the decomposition of the triangulated category $^{\bot}\I[1]/\I=\X/\I\oplus \Y/\I$, the morphisms between objects in $\T_\X$ and $\T_\Y$ factor through $\I$ and thus in the quiver $Q(\T_\I)$ there are no arrows between the vertices in $ind\T_{\X}$ and the vertices in $ind\T_{\Y}$. Therefore $Q(\T_{\X}\oplus \I)$ is a glue of some connected components of $Q(\T_\I)$. Thus from Theorem \ref{Prop of inj}, $\A(\T_{\X}\oplus\I)$ is a rooted cluster subalgebras of $\A(\T_\I)$.
Similarly, $\A(\T_{\Y}\oplus\I)$ is a rooted cluster subalgebras of $\A(\T_\I)$. Moreover, it is clear that $\A(\T_\I)$ is a glue of $\A(\T_{\X}\oplus\I)$ and $\A(\T_{\Y}\oplus\I)$ along $\varphi(\I)$. Since $\A(\T_\I)$ is a rooted cluster subalgebra of $\A(\T)$ by Definition-Proposition\ref{inj of frozen}, $\A(\T_{\X}\oplus\I)$ and $\A(\T_{\Y}\oplus\I)$ are rooted cluster subalgebras of $\A(\T)$.
\item From Theorem \ref{Prop of inj}, any rooted cluster subalgebra of $\A(\T)$ with coefficient set $\varphi(\I)$ is of the form $\A(\T'\oplus \I)$ where $\T'\oplus \I \subseteq \T$, and in the quiver $Q(\T)$, there are no arrows between vertices in $ind\T'$ and vertices in $ind\T \setminus ind{(\T'\oplus \I)}$. Let $\T''=add(ind\T \setminus ind{(\T'\oplus \I)})$ be a subcategory of $\C$. Now we consider the pair $(\T',\T'')$ in the subfactor category $^{\bot}\I[1]/\I$, which is a $2$-Calabi-Yau triangulated category since $\I$ is functorially finite in $\C$. Note that $\T'\oplus\T''$ is a cluster tilting subcategory in $^{\bot}\I[1]/\I$ and $\Hom_{^{\bot}\I[1]/\I}(\T',\T'')=\Hom_{^{\bot}\I[1]/\I}(\T'',\T')=0$. Thus we have $^{\bot}\I[1]/\I=(\T'\oplus\T'')\ast (\T'\langle1\rangle\oplus\T''\langle1\rangle)=\T'\ast \T'\langle1\rangle \oplus \T''\ast \T''\langle1\rangle=\C_1\oplus \C_2$ as a decomposition of triangulated category by Proposition 3.5\cite{ZZ12}. Let $\pi : ^{\bot}\I[1]\rightarrow ^{\bot}\I[1]/\I$ be the natural projection. Then because $(\C_1,\C_2)$ is a cotorsion pair in $^{\bot}\I[1]/\I$ with core $\{0\}$, $(\X',\Y')=(\pi^{-1}(\C_1),\pi^{-1}(\C_2))$ is a cotorsion pair in $\C$ with core $\I$ by Theorem 3.5\cite{ZZ11}. It is clear that $\T' \oplus \I$ is $\X'$-cluster tilting.
\item The first assertion follows from above statements (1) and (2). The second assertion follows from the fact that $(\X,\Y))$ is a t-structure if and only if $\I=\{0\}$ (Proposition 2.9 \cite{ZZ11}).
\item It is easily follows from Lemma \ref{lem in 3.2} that $Q(\T\setminus\I)$ is a full subquiver of $Q(\T_\I)$ by deleting all the frozen vertices. Therefore the conclusion follows from Proposition \ref{prop of spec for frozen}.
\item Note that the maps $\varphi_1, \varphi_2$ and $\varphi'$ are all induced by $\varphi$, and the injections $i_1,i_2,j_1$ and $j_2$, the surjection $p$ and $\pi$ are all canonical, thus the commutative diagram is natural valid.
\end{enumerate}
\end{proof}

\begin{rem}
\begin{enumerate}
\item If $\C$ has a cluster tilting object, then any rigid subcategory $\I$ is additive generated by an object in $\C$\cite{DK08,ZZ}. Thus $\I$ is functorially finite in $\C$. Therefore there is a bijection between the following two sets:
\begin{center}
\{cotorsion pairs in $\C$ with core $\I$\}\\
$\Updownarrow$\\
\{complete pairs of rooted cluster subalgebras of $\A(\T)$ with coefficient set $\varphi(\I)$\}.
\end{center}
In this case, each rooted cluster subalgebra $\A(\T_{\X}\oplus\I)$ of $\A(\T)$ has a 2-Calabi-Yau categorification by the stably 2-Calabi-Yau category $\X$ in the sense of \cite{BIRS09,FK10}.
\item It follows from above statement that if a rooted cluster algebra $\A(\S)$ has a 2-Calabi-Yau categorification by a 2-Calabi-Yau triangulated category with a cluster tilting object, then any rooted cluster subalgebra of $\A(\S)$ has a 2-Calabi-Yau categorification by a stably 2-Calabi-Yau category.
\end{enumerate}
\end{rem}

\begin{cor}\label{Final cor}
Under the settings assumed at the beginning of this subsection, we unify the following six kinds of decompositions:
\begin{enumerate}
\item The decomposition of the triangulated category $^{\bot}\I[1]/\I$ in the sense of Proposition II.2.3 in \cite{BIRS09}.
\item The decomposition of the cluster tilting subcategory $\T$ in $^{\bot}\I[1]/\I$ in the sense of Definition 3.3 in \cite{ZZ12}.
\item The decomposition of the ice quiver $Q(\T_\I)$ in the sense of Definition-Proposition \ref{prop of decop of IVQ}.
\item The decomposition of the exchange matrix $B(\T_\I)$ in the sense of Definition-Proposition \ref{prop of decop of Matrix}.
\item The decomposition of the seed $\S(\T_\I)$ in the sense of Definition \ref{def of indec of seed}.
\item The decomposition of the rooted cluster algebra $\A(\T_\I)$ in the sense of Theorem \ref{inj of indec comp}.
\end{enumerate}
\end{cor}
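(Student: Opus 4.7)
The plan is to establish a chain of compatible bijections linking the six decompositions, so that each is equivalent to the next through a construction already developed earlier in the paper. The overall route I would follow is
$(1) \leftrightarrow (2) \leftrightarrow (3) \leftrightarrow (4) \leftrightarrow (5) \leftrightarrow (6)$,
with the categorical half $(1)$--$(2)$ handled via the results of \cite{BIRS09,ZZ12} and the combinatorial half $(3)$--$(6)$ handled via the bijections built in Subsections \ref{Sec RCMIce} and \ref{Sec RCMT}.

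First, for $(1)\leftrightarrow(2)$: by Lemma \ref{lem of decop of SFTC} the subfactor $^{\bot}\I[1]/\I$ splits as a coproduct of triangulated subcategories along the cotorsion-pair decomposition; since $\T\setminus\I$ is a cluster tilting subcategory of $^{\bot}\I[1]/\I$, any such splitting of the ambient category restricts to a splitting of $\T\setminus\I$ into blocks with no morphisms between them in $^{\bot}\I[1]/\I$. Conversely, a block decomposition of $\T\setminus\I$ in the sense of Definition~3.3 of \cite{ZZ12} forces a decomposition of $^{\bot}\I[1]/\I$ by writing any object as a cone of morphisms inside $\T\setminus\I\ast (\T\setminus\I)\langle 1\rangle$, exactly as used in the proof of Theorem \ref{thm of Last}(3). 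Thus the indecomposable summands of the category match, one for one, the indecomposable summands of $\T\setminus\I$.

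Next, for $(2)\leftrightarrow(3)$: the quiver $Q(\T\setminus\I)$ is the principal part of the ice quiver $Q(\T_\I)$, so a decomposition of $\T\setminus\I$ into summands $\T^{(1)},\dots,\T^{(t)}$ is equivalent to a decomposition of $Q(\T\setminus\I)$ into its connected components $Q(\T^{(1)}),\dots,Q(\T^{(t)})$. Reattaching each frozen vertex of $\I$ to precisely those components with which it shares an arrow in $Q(\T_\I)$ reproduces the decomposition of the ice quiver into indecomposable components in the sense of Definition-Proposition \ref{prop of decop of IVQ}, since by definition indecomposability of an ice quiver requires both connectivity of the whole and connectivity of the principal part. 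Then $(3)\leftrightarrow(4)$ is the matrix--quiver bijection of Subsection \ref{Sec RCMIce}, which preserves indecomposability (Definition-Proposition \ref{prop of decop of Matrix}); $(4)\leftrightarrow(5)$ is tautological from Definition \ref{def of indec of seed}; and $(5)\leftrightarrow(6)$ is exactly Theorem \ref{inj of indec comp}, which identifies the tensor decomposition of $\A(\S(\T_\I))$ with the decomposition of the seed $\S(\T_\I)$.

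The main obstacle, and really the only nontrivial point, is to keep careful track of the coefficient vertices coming from $\I$ across the bridge $(2)\leftrightarrow(3)$. On the triangulated side the core $\I$ is the common summand of the two pieces of $^{\bot}\I[1]/\I$ in the cotorsion-pair situation, while on the combinatorial side it becomes a family of frozen vertices, each of which may be attached to several indecomposable components of $Q(\T_\I)$. The matching is given exactly by the gluing construction described just after Definition-Proposition \ref{prop of decop of IVQ}, and is reflected algebraically by the identification ideal $\I$ used in the quotient $\A(\S_1)\otimes_{\Z}\cdots\otimes_{\Z}\A(\S_t)/\I$ of Theorem \ref{inj of indec comp}. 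Once this accounting of frozen variables is fixed, the six decompositions are seen to be the same partition $\{1,\dots,t\}$ of the underlying data, viewed through six different lenses, and the corollary follows by composing the bijections above.
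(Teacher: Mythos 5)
Your proposal is correct and follows essentially the same route as the paper: the paper likewise chains the identifications $(1)\leftrightarrow(2)$ (citing Theorem 3.10 of \cite{ZZ12}, whose content your sketch via Lemma \ref{lem of decop of SFTC} and the $\T\ast\T\langle1\rangle$ argument reconstructs), $(2)\leftrightarrow(3)$ directly from the definitions, and $(3)\leftrightarrow(4)\leftrightarrow(5)\leftrightarrow(6)$ from Subsections \ref{Sec RCMIce} and \ref{Sec RCMT}. Your extra care in tracking the frozen vertices coming from $\I$ across the bridge $(2)\leftrightarrow(3)$ is a useful elaboration of what the paper leaves implicit, but it is not a different argument.
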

\begin{proof}
It is proved in Theorem 3.10 \cite{ZZ12} that the first two decompositions are unified. We have proved in subsections \ref{Sec RCMIce} and \ref{Sec RCMT} that the last four decompositions are unified. It can be directly derived from definitions that the decompositions of $\T$ and $Q(\T_\I)$ are unified.
\end{proof}
It follows from Theorem \ref{thm of Last} and Corollary \ref{Final cor} that we can classify the cotorsion pairs in $\C$ with core $\I$ by gluing indecomposable components of $\A(\T_\I)$ (or equivalently of $Q(\T_\I)$ , of $B(\T_\I)$ and of $\S(\T_\I)$ respectively). In fact, each way of gluing all the indecomposable components of $\A(\T_\I)$ to a complete pair of rooted cluster subalgebras of $\A(\T_\I)$ with coefficient set $\varphi(\I)$ gives a unique cotorsion pair in $\C$ with core $\I$.

\section*{Acknowledgements}

The authors wish to thank Jie Zhang and Wuzhong Yang for helpful discussions on the topic. After finishing the paper, we know from arXiv that Sira Gratz has also some similar results in Section 2.2 on ideal rooted cluster morphisms in \cite{G14}. Both authors would like to thank the anonymous reviewer for careful reading and valuable suggestions.

\end{document}